\newtheorem{theorem}{Theorem}[section]
\newtheorem{lemma}[theorem]{Lemma}
\newtheorem{corollary}[theorem]{Corollary}
\theoremstyle{definition}
\newtheorem{remark}[theorem]{Remark}
\numberwithin{equation}{section}
 \theoremstyle{plain}    
 \numberwithin{equation}{section} %% Comment out for sequentially-numbered
 \numberwithin{figure}{section} %% Comment out for sequentially-numbered
 \theoremstyle{plain}    
 \theoremstyle{plain}    
 \theoremstyle{remark}    
 \newtheorem*{acknowledgement*}{Acknowledgement} 
\newcommand{\cA}{{\mathcal A}}
\newcommand{\cB}{{\mathcal B}}
\newcommand{\cC}{{\mathcal C}}
\newcommand{\cF}{{\mathcal F}}
\newcommand{\cG}{{\mathcal G}}
\newcommand{\cH}{{\mathcal H}}
\newcommand{\cL}{{\mathcal L}}
\newcommand{\cN}{{\mathcal N}}
\newcommand{\cO}{{\mathcal O}}
\newcommand{\Om}{{\Omega}}
\newcommand{\om}{{\omega}}
\newcommand{\ve}{{\varepsilon}}
\newcommand{\del}{{\delta}}
\newcommand{\gam}{{\gamma}}
\newcommand{\Gam}{{\Gamma}}
\newcommand{\sig}{{\sigma}}
\newcommand{\al}{{\alpha}}
\newcommand{\be}{{\beta}}
\newcommand{\ka}{{\kappa}}
\newcommand{\la}{{\lambda}}
\newcommand{\bbE}{{\mathbb E}}
\newcommand{\bbN}{{\mathbb N}}
\newcommand{\bbP}{{\mathbb P}}
\newcommand{\bbI}{{\mathbb I}}
\begin{document}
\title[]{Poisson and compound Poisson approximations\\ 
in a nonconventional setup}%
 \vskip 0.1cm 
 \author{ Yuri Kifer and Ariel Rapaport\\
\vskip 0.1cm
 Institute  of Mathematics\\
Hebrew University\\
Jerusalem, Israel}%
\address{
Institute of Mathematics, The Hebrew University, Jerusalem 91904, Israel}
\email{ kifer@math.huji.ac.il, ariel.rapaport@mail.huji.ac.il}%

\thanks{ }
\subjclass[2000]{Primary: 60F05 Secondary: 37D35, 60J05}%
\keywords{Poisson and compound Poisson appproximations,
nonconventional sums, $\psi$-mixing, subshift of finite type.}%
\dedicatory{  }
 \date{\today}
\begin{abstract}\noindent
It was shown in \cite{Ki2} that for any subshift of finite type considered
with a Gibbs invariant measure the numbers of multiple recurrencies to 
shrinking cylindrical neighborhoods of almost all points are asymptotically
Poisson distributed. Here we not only extend this result to all $\psi$-mixing
shifts with countable alphabet but actually show that for all points the 
distributions of these numbers are asymptotically close either to Poisson
or to compound Poisson distributions. It turns out that for all nonperiodic
points a limiting distribution is always Poisson while at the same time 
for periodic points there may be no limiting distribution at all unless 
the shift invariant measure is Bernoulli in which case the limiting 
distribution always exists. As a corollary we obtain also that 
the first occurence time of the multiple recurrence event is asymptotically
exponentially distributed. Most of the results are new also
for widely studied single recurrencies (see, for instance, \cite{HV1},
\cite{HV2}, \cite{AS} and \cite{AV}), as well.

\end{abstract}
%\footnotetext[1]{} 
\maketitle
\markboth{Yu.Kifer and A. Rapaport}{Poisson and compound Poisson approximations} 
\renewcommand{\theequation}{\arabic{section}.\arabic{equation}}
\pagenumbering{arabic}

\section{Introduction}\label{sec1}\setcounter{equation}{0}

Ergodic theorems for nonconventional sums of the form 
\begin{equation}\label{1.1}
S_N=S_N(\om)=\sum_{k=1}^{N}f_1(T^{q_1(k)}\om)f_2(T^{q_2(k)}\om)\cdots
f_\ell(T^{q_\ell (k)}\om),
\end{equation}
were initiated in \cite{Fu1} and employed there in the proof of 
S\' zemeredi's theorem on arithmetic progressions while the name
"nonconventional" comes from \cite{Fu2}. Here $T$ is an ergodic measure
preserving transformation, $f_i$'s are bounded measurable functions and
$q_i(k)=ik$, $i=1,...,\ell$. Since then results concerning such  ergodic
theorems under various conditions evolved into a substantial body of
literature. More recently under appropriate mixing conditions a strong
law   of large numbers and a functional central limit theorem were obtained
even for more general sums in \cite{Ki1} and \cite{KV}, respectively.

If $f_i$ equals for each $i$ an indicator $\bbI_A$ of the same measurable
set $A$ then the corresponding sum $S_N=S_N^A(\om)$ counts the multiple
recurrence events $T^{q_i(k)}\om\in A,\, i=1,...,\ell$ which occur for
$k\leq N$. It was shown in \cite{Ki2} that if we count such multiple
arrivals to appropriately shrinking sets $A_n$ then the sums
\begin{equation}\label{1.2}
S_{N}^{A_n}(\om)=\sum_{k=1}^{N}\bbI_{A_n}(T^{q_1(k)}\om)
\bbI_{A_n}(T^{q_2(k)}\om)\cdots\bbI_{A_n}(T^{q_\ell (k)}\om)
\end{equation}
usually will have asymptotically Poisson distribution for suitably chosen 
sequences $N=N_n\to\infty$ as $n\to\infty$. For $\ell=1$ and $q_1(k)=k$ this
type of results was obtained in a series of papers under various conditions
(see, for instance, \cite{HV1}, \cite{HV2}, \cite{AS} and \cite{AV}).

To explain the results of this paper more precisely let us specify first
our setup which consists of a stationary $\psi$-mixing discrete time
process $\xi(k),\, k=0,1,...$ evolving on a finite or countable state space
 $\cA$ and of
nonnegative increasing functions $q_i,\, i=1,...,\ell$ taking on integer
values on integers and such that $q_1(k)<q_2(k)<\cdots <q_\ell(k)$ when
$k\geq 1$. For each sequence $a=(a_0,a_1,...)\in\cA^\bbN$ of elements
from $\cA$ and any $n\in\bbN$ denote by $a^{(n)}$ the string $a_0,a_1,...
,a_{n-1}$ which determines also an $n$-cylinder set $A^a_n$ in $\cA^\bbN$ 
consisting of sequences whose initial $n$-string coincides with 
$a_0,a_1,...,a_{n-1}$. For appropriately chosen sequences $N=N_n$ we are 
interested in the number of those $l\leq N$ such that the process $\xi(k)=
\xi(k,\om),\, k\geq 0$ repeats the string $a^{(n)}$ starting at times $q_1(l),\,
 q_2(l),...,q_\ell(l)$. Employing the left shift transformation $T$ on the 
sequence space $\cA^\bbN$ we can represent the number in question via 
$S^A_N(\om)$ given by (\ref{1.2}) with $A=A^a_n$ and $N=N_n$ considering
$S^A_N$ as a random variable on the probability space  corresponding to the
process $\xi$.

Viewing such $S^{A^a_n}_N$ as random variables on $\cA^\bbN$ considered with
a Gibbs $T$-invariant measure $\bbP$ it was shown in \cite{Ki2} that 
$S^{A_n^a}_N$ for almost all $a\in\cA^\bbN$ has asymptotically a Poisson 
distribution with a parameter $t$ provided
\begin{equation}\label{1.3}
N=N_n=N_n^a\sim t(\bbP(A^a_n))^{-\ell}\quad\mbox{as}\quad n\to\infty.
\end{equation}
Observe that such asymptotic in $n$ results make sense only for sequences
$a\in\cA^\bbN$ such that $\bbP(A^a_n)>0$ for all $n\geq 1$ but this is
good enough since the set of such $a$'s has probability one.
We will show in this paper for linear $q_i$'s that under (\ref{1.3}) 
for large $n$ the
distribution of $S^{A_n^a}_N$ with $N$ as in (\ref{1.3}) is close  
either to a Poisson distribution with a parameter $t$ or to a compound
Poisson distribution. The latter or the former holds true depending on
whether or not the string $a^{(n)}$ looks periodic with relatively short
with respect to $n$ period. In the "conventional" case $\ell=1$ Poisson
approximation estimates were obtained in \cite{AV0} and \cite{AV} while 
compound Poisson
approximation estimates for periodic sequences were derived in \cite{HV2}
but even in this case the complete dichotomy as described above seems to
be new.

Relying on our approximation estimates it is possible to see (assuming 
(\ref{1.3})) for which sequences $a$ the distribution of $S_N^{A^a_n}$
approaches for large $n$ the Poisson distribution and for which compound
Poisson. Moreover, we will show that under (\ref{1.3}) for all nonperiodic
sequences $a$ the sum $S_N^{A^a_n}$ converges in distribution as $n\to\infty$
to a Poisson random variable with the parameter $t$. On the other hand,
for periodic sequences $a$ the limiting distribution may not exist at all
 and we provide a corresponding example. We observe that
an attempt to construct such an example was made in Section 3.4 of 
\cite{HV2} but it follows both from our estimates and, actually, already
from \cite{AV} that for the example in \cite{HV2} the limiting distribution
exists and it is Poisson. We prove also that if $\bbP$ is a product 
stationary measure on $\cA^\bbN$ (Bernoulli measure), i.e. if coordinate 
projections are independent identically distributed (i.i.d.) random variables,
then the limiting distributions, either Poissonian or compound Poissonian,
exist for all sequences $a$. The above results describe completely the
limiting behavior as $n\to\infty$ in this setup and they seem to be new even
for the extensively studied $\ell=1$ case. Furthermore, either of the above
convergence results ruins also the example in \cite{HV2} since the latter 
counts arrivals to cylinders constructed by a nonperiodic sequence and it is
built on a shift space with a product probability measure. Nevertheless,
as our nonconvergence example shows a slight perturbation of independency 
may already create sequences $a$ where convergence fails.

Let $a\in\cA^\bbN$ and $\tau_{A^a_n}(\om)$ be the first time $k$ when the 
string $a^{(n)}$ starts at each of the places $q_1(k),q_2(k),...,q_\ell(k)$ 
of a sequence $\om\in\cA^\bbN$. When $\ell=1$ and $q_1(k)=k$ such hitting
times were studied in a number of papers (see, for instance, \cite{Ab},
\cite{AS} and references there) where it was shown that under appropriate
normalization they have as $n\to\infty$ exponential limiting distribution.
As a corollary of our Poisson and compound Poisson approximations we will
extend here this type of results to the nonconventional $\ell>1$ situation.

Our results are applicable to larger classes of dynamical systems and not
only to shifts. Indeed, any expansive endomorphism $S$ of a compact $M$, in
particular, any smooth expanding endomorphism of a compact manifolds, has
a symbolic representation as a one sided shift by taking a finite partition
$\al_1,...,\al_k$ of $M$ into sets of small diameter and assigning to each
point $x\in M$ the sequence $j_0,j_1,...$ such that $x\in\cap_{i=0}^\infty
S^{-i}\al_{j_i}$ while noticing that the last intersaction is a singelton
in view of expansivity. Smooth expanding endomorphisms of compact manifolds
have many exponentially fast $\psi$-mixing invariant measures which are
Gibbs measures constructed by H\" older continuous functions which ensures
fast decay of our approximation estimates in this case. Our results remain
valid also for many invertible dynamical systems having symbolic 
representations as two sided shifts, notably, for Axiom A (in particular,
Anosov) diffeomorphisms. Within number theoretic applications the results 
can be formulated in terms of occurence of prescribed strings of digits in
base--$m$ or continued fraction expansions.

The structure of this  paper is the following. In the next section we
describe precisely our setup and conditions, give necessary definitions,
formulate our main results and discuss more their connections and relevance.
In Section \ref{sec3} we state and prove some auxiliary lemmas. In
Sections \ref{sec4} and \ref{sec5} our Poisson and compound Poisson
approximations results will be proved. In Section \ref{sec6} we prove existence
of limiting distributions in the i.i.d. case while
in Section \ref{sec7} we exhibit our nonconvergence example.

\section{Preliminaries and main results}\label{sec2}\setcounter{equation}{0}

We start with a probability space $(\Om,\cF,\bbP)$ such that $\Om$ is
a space of sequences $\cA^\bbN$ with entries from a finite or countable
set (alphabet) $\cA$ which is not a singelton, the $\sigma$-algebra $\cF$ 
is  generated by cylinder sets and a
probability $\bbP$ invariant with respect to the left shift $T$ acting by
$(T\om)_i=\om_{i+1}$ for $\om=(\om_0,\om_1,...)\in\Om=\cA^{\bbN}$. For each
$I\subset\bbN$ denote by $\cF_I$ the sub $\sig$-algebra of $\cF$ generated
by the cylinder sets $[a_i,\, i\in I]=\{\om=(\om_0,\om_1,...)\in\Om:\,
\om_i=a_i\,\forall i\in I\}$. Without loss of generality we assume that
the probability of each 1-cylinder set is positive, i.e. $\bbP([a])>0$ for 
every $a\in\cA$, and since $\cA$ is not a singelton we have also 
$\sup_{a\in\cA}\bbP([a])<1$. Our results will be based on the $\psi$-mixing
(dependence) coefficient defined for any two $\sig$-algebras $\cG,\cH\subset
\cF$ by (see \cite{Br}),
\begin{eqnarray}\label{2.1}
&\psi(\cG,\cH)=\sup_{B\in\cG,\, C\in\cH}\big\{\big\vert\frac {\bbP(B\cap C)}
{\bbP(B)\bbP(C)}-1\big\vert,\,\,\bbP(B)\bbP(C)\ne 0\big\}\\
&=\sup\{\| \bbE(g|\cG)-Eg\|_\infty :\, g\,\,\,\mbox{is}\,\,\,
\cH-\mbox{measurable and}\,\,\, \bbE|g|\leq 1\}\nonumber
\end{eqnarray}
where $\|\cdot\|_p$ is the $L^p(\Om,\bbP)$-norm. Next, we set
\begin{equation}\label{2.2}
\psi_m=\sup_{n\in\bbN}\psi(\cF_{0,n},\cF_{n+m+1,\infty})
\end{equation}
where $\cF_{k,n}=\cF_{\{i:\,k\leq i\leq n\}}$ when $0\leq k\leq n\leq\infty$.
It follows from (\ref{2.1}) and (\ref{2.2}) that $\psi_m$ is non increasing
in $m$ and the measure $\bbP$ is called $\psi$-mixing if $\psi_0<\infty$  
and $\psi_m\to 0$ as $m\to\infty$.

Another important ingredient of our setup is a collection of $\ell$
positive increasing integer valued functions $q_1,...,q_\ell$ defined
on positive integers $\bbN_+$ and such that
\begin{eqnarray}\label{2.3}
&q_1(n)<q_2(n)<\cdots <q_{\ell}(n),\,\,\forall n\in\bbN_+\,\,\mbox{and}\\
&\lim_{n\to\infty}(q_{i+1}(n)-q_i(n))=\infty,\,\,\forall i=1,...,\ell -1.
\nonumber\end{eqnarray}
Our estimates will involve the following quantities related to these functions
\begin{equation}\label{2.4}
g(n)=\inf_{k\geq n}\min_{1\leq i\leq\ell -1}(q_{i+1}(k)-q_i(k))\,\,\mbox{and}
\,\,\gam(n)=\min\{ k\geq 0:\, g(k)\geq 2n\}
\end{equation}
where $n\in\bbN_+$. In view of the second assumption in (\ref{2.3}) the function
$\gam(n)$ is well defined for all $n$.

Denote by  $\cC_n$ the set of all $n$-cylinders $A=[a_0,,a_1,...,
a_{n-1}]$. Here and in what follows we denote as above by $[Q]$ a cylinder if 
$Q$ is a finite string of elements from $\cA$ but, as usual, when $Q$ is a
 number then $[Q]$ denotes the integral part of $Q$.
Similarly to \cite{AV} we introduce also for each $A=[a_0,a_1,...,a_{n-1}]
\in\cC_n$ the quantity
\begin{equation}\label{2.5}
\pi(A)=\min\{ k\in\{ 1,2,...,n\}:\, A\cap T^{-k}A\ne\emptyset\}
\end{equation}
setting also $A(\pi)=[a_0,a_1,...,a_{\pi(A)-1}]\in\cC_{\pi(A)}$.
Next, for each $A\in\cC_n$ we write
\begin{equation}\label{2.6}
S^A_N=\sum_{k=1}^NX^A_k\,\,\mbox{where}\,\, X^A_k=\prod_{i=1}^\ell
\bbI_A\circ T^{q_i(k)}
\end{equation}
and often, when $A$ will be fixed and clear, we will drop the index $A$
in $X_k^A$ and $S^A_N$. In order to shorten our formulas we will use
throughout this paper the notation $\wp(x)=xe^x$.
The following result exhibits our Poisson approximation estimates
(for $\ell=1$ a  similar estimate was obtained in \cite{AV}).

\begin{theorem}\label{thm2.1}
Let $A\in\cC_n$ with $\bbP(A)>0$, $t>0$, $N=[t(\bbP(A))^{-\ell}]$ and 
assume that $\psi_n<(3/2)^{1/(\ell+1)}-1$. Then
\begin{eqnarray}\label{2.7}
&\sup_{L\subset\bbN}|\bbP\{ S^A_N\in L\}-P_t(L)|\leq 16\bbP(A)\big(\ell^2nt+
\gam(n)(1+t^{-1}))\\
&+6\bbP(A(\pi))tn\ell^2(1+\psi_0)+2\wp(2^\ell t\psi_n+\gam(n)\bbP(A))
\nonumber\end{eqnarray}
where $P_t(L)$ is the probability assigned to $L$ by the Poisson distribution
with the parameter $t$. Moreover, assume that $\bbP$ is $\psi$-mixing (i.e.
$\psi_n\to 0$ as $n\to\infty$) then (\ref{2.7}) holds true with
\begin{equation}\label{2.8}
\bbP(A)\leq e^{-\Gam n}\,\,\mbox{and}\,\,\bbP(A(\pi))\leq
e^{-\Gam\pi(A)}
\end{equation}
for some $\Gam>0$ independent of $A$ and $n$.
\end{theorem}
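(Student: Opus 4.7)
The plan is to apply the Chen--Stein method in its Arratia--Goldstein--Gordon form
\[
\sup_{L\subset\bbN}\bigl|\bbP\{S_N^A\in L\}-P_\lambda(L)\bigr|\le 2(b_1+b_2+b_3),\qquad\lambda=\bbE S_N^A,
\]
where the neighborhoods $\Gam_k\subset\{1,\dots,N\}$ are chosen so that the coordinate-ranges of $X_k^A$ and $X_m^A$ are well separated for $m\notin\Gam_k$. Since $X_k^A$ is measurable with respect to $\cF_{I_k}$ for $I_k=\bigcup_{i=1}^{\ell}[q_i(k),q_i(k)+n-1]$, and consecutive blocks inside $I_k$ are separated by $g(k)\ge 2n$ once $k\ge\gam(n)$, I would take $\Gam_k=\{m:|q_i(k)-q_j(m)|\le n\text{ for some }i,j\}$, whose cardinality is $O(\ell^2 n+\gam(n))$.

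First I would control $\lambda$ itself: iterated $\psi$-mixing across the $\ell-1$ gaps (each of length at least $n$ when $k\ge\gam(n)$) compares $\bbE X_k^A$ to $\bbP(A)^\ell$ with multiplicative error $(1+\psi_n)^\ell$, bounded by $1+2^\ell\psi_n$ under the hypothesis $\psi_n<(3/2)^{1/(\ell+1)}-1$, while the boundary indices $k<\gam(n)$ contribute at most $\gam(n)\bbP(A)$. This accounts for both summands inside $\wp(\cdot)$ in \eqref{2.7}; replacing the Poisson reference measure $P_\lambda$ by $P_t$ then costs the extra exponential Stein factor, which is the source of $\wp(x)=xe^x$. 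The term $b_1\le\sum_k|\Gam_k|\bbE X_k^A\cdot\max_m\bbE X_m^A$ is controlled using $N\bbE X_k^A\lesssim t$, $|\Gam_k|\lesssim\ell^2 n+\gam(n)$, and $\bbE X_k^A\le\bbP(A)$, yielding $16\bbP(A)(\ell^2 nt+\gam(n)(1+t^{-1}))$; the factor $t^{-1}$ comes from the $\gam(n)$ boundary indices, where only the crude bound $\bbE X_k^A\le\bbP(A)$ is available.

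The main obstacle is the clustering term $b_2=\sum_{k}\sum_{m\in\Gam_k\setminus\{k\}}\bbE(X_k^A X_m^A)$, which without periodicity input would be too large. I would argue that $X_k^A X_m^A=1$ forces $A\cap T^{-s}A\ne\emptyset$ for some $0<s\le n$, so on this event $\om$ must lie in a translate of $A(\pi)$ at one of the affected blocks; combining this with $\psi$-mixing to strip off the remaining $\ell-1$ factors of $\bbI_A$ at well-separated times gives $\bbE(X_k^A X_m^A)\le(1+\psi_0)\bbP(A(\pi))\bbP(A)^{\ell-1}$, and summing over $k\le N$ and the $O(\ell^2 n)$ choices of $m\in\Gam_k\setminus\{k\}$ produces the contribution $6\bbP(A(\pi))tn\ell^2(1+\psi_0)$. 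Finally, $b_3$ compares $X_k^A$ to its conditional expectation given $\sig(X_m^A:m\notin\Gam_k)$, and the construction of $\Gam_k$ guarantees a coordinate separation of at least $n$, so $\psi$-mixing gives $b_3\le\lambda\psi_n\le 2^\ell t\psi_n$, which is absorbed inside $\wp(\cdot)$.

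For the second statement, a standard block argument using $\psi$-mixing delivers the exponential bounds: picking $m_0$ with $\psi_{m_0}<\tfrac12$, one splits $A\in\cC_n$ as a concatenation of a cylinder of length $r$, a gap of length $m_0$, and the remainder, to obtain $\bbP(A)\le(1+\psi_{m_0})\bbP(A_1)\bbP(A_2)$; iterating this estimate together with $\sup_a\bbP([a])<1$ gives $\bbP(A)\le e^{-\Gam n}$ for a constant $\Gam>0$ depending only on the process, and the same bound applied to $A(\pi)\in\cC_{\pi(A)}$ yields $\bbP(A(\pi))\le e^{-\Gam\pi(A)}$. The hard part throughout is the clustering step, since it is precisely where the dichotomy between generic and periodic cylinders enters, via the subtle factor $\bbP(A(\pi))$.
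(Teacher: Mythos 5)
Your overall architecture coincides with the paper's: handle the first $\gam(n)$ indices separately, apply Theorem 1 of \cite{AGG} to the remaining sum with neighborhoods of size $O(\ell^2 n)$, compare $P_\lambda$ with $P_t$ via a total-variation bound between two Poisson laws (the source of $\wp$), and obtain (\ref{2.8}) by the standard gap-splitting argument (the paper simply cites \cite{Ab0} and \cite{GS} in its Lemma \ref{lem3.1}). However, there is a genuine error at the heart of the clustering estimate. Your per-pair bound $\bbE(X_k^AX_m^A)\le(1+\psi_0)\bbP(A(\pi))\bbP(A)^{\ell-1}$ is short by a factor of $\bbP(A)$: summed over $k\le N\approx t(\bbP(A))^{-\ell}$ and the $O(\ell^2 n)$ choices of $m$, it yields $b_2\lesssim t\ell^2 n\,\bbP(A(\pi))/\bbP(A)$, not $6\bbP(A(\pi))tn\ell^2(1+\psi_0)$; in the generic case $\pi(A)=n$, where $A(\pi)=A$, your bound is of order $t\ell^2 n$ and does not even tend to zero, so the stated conclusion does not follow from the stated estimate. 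The correct move (the paper's) is: assuming $k<m$, either $q_1(m)-q_1(k)<\pi(A)$, in which case the joint event is empty by the definition of $\pi(A)$, or $q_1(m)-q_1(k)\ge\pi(A)$, and then $\{X_k=1,X_m=1\}\subset T^{-q_1(k)}A(\pi)\cap\{X_m=1\}$, where the $A(\pi)$-block lies entirely to the left of every block of $X_m$; a single application of $\psi_0$-mixing across that one cut, followed by Lemma \ref{lem3.2} applied to $\{X_m=1\}$, gives $\bbE(X_k^AX_m^A)\le(1+\psi_0)(1+\psi_n)^\ell\bbP(A(\pi))(\bbP(A))^{\ell}$ --- that is, you must keep all $\ell$ factors of $\bbI_A$ coming from $X_m$, the occurrence of $A(\pi)$ being an additional piece, not a replacement for one of them. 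Note also that your plan to ``strip off'' blocks at well-separated times is delicate if done blockwise, since several block pairs of $X_k$ and $X_m$ can overlap simultaneously (e.g.\ when $q_i(k)=ik$, closeness of $q_1(k)$ and $q_1(m)$ entails closeness of $q_2(k)$ and $q_2(m)$), which the leftmost-cut argument above sidesteps entirely.

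A secondary gap: in $b_3$ you invoke the raw coefficient $\psi_n$ for conditioning on $\sig\{X_m^A:\,m\notin\Gam_k\}$, but $\psi_n$ as defined in (\ref{2.2}) refers to a single past/future cut, whereas the index set of the conditioning algebra interleaves with the $\ell$ coordinate blocks of $X_k^A$. One needs a bound on $\psi(\cF_Q,\cF_{\tilde Q})$ for interleaved unions of blocks --- the paper's Lemma \ref{lem3.3} --- which costs a factor $2^{2\ell+O(1)}$ and is precisely where the hypothesis $\psi_n<(3/2)^{1/(\ell+1)}-1$ is used, to keep $(2-(1+\psi_n)^{\ell+1})^{-2}$ bounded (it also guarantees, via Lemma \ref{lem3.2}, that $\bbP\{X_\al=1\}>\frac12(\bbP(A))^\ell>0$ for $\al\ge\gam(n)$, needed to apply \cite{AGG} at all). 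Relatedly, your attribution of the $t^{-1}$ term to boundary indices inside $b_1$ is not right: keeping $k<\gam(n)$ inside the Chen--Stein sum does not produce a $t^{-1}$; the clean route is to discard those indices before applying \cite{AGG} at cost $2\gam(n)\bbP(A)$, the factor $(1+t^{-1})$ and the leading $16\bbP(A)$ being reserved for the trivial regimes $t(\bbP(A))^{-\ell}<4$ and $\gam(n)\ge\frac t4(\bbP(A))^{-\ell}$, which your sketch does not treat but which are needed for (\ref{2.7}) to hold as written.
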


Next, for each $\om=(\om_0,\om_1,...)\in\Om$ and $n\geq 1$ set $A^\om_n=
[\om_0,.....,\om_{n-1}]$. Denote by $\Om_\bbP$ the set of $\om\in\Om$
such that $\bbP(A^\om_n)>0$ for all $n\geq 1$. Clearly, $T\Om_\bbP\subset
\Om_\bbP$ and $\bbP(\Om_\bbP)=1$ since $\Om_\bbP$ is the complement in $\Om$
of the union of all cylinder sets $A\in\cC_n,\, n\geq 1$ such that $\bbP(A)=0$
and the number of such cylinders is countable. Theorem \ref{thm2.1} yields the 
following asymptotic result.

\begin{corollary}\label{cor2.2} Assume that the conditions of Theorem 
\ref{thm2.1} together with the $\psi$-mixing assumption hold true. Then
for $\bbP$-almost all $\om\in\Om$ there exists $M(\om)<\infty$ such that for
any $n\geq M(\om)$,
\begin{eqnarray}\label{2.9}
&\sup_{L\subset\bbN} |\bbP\{ S^{A^\om_n}_N\in L\}-P_t(L)|\leq 16e^{-\Gam n}
\big(\ell^2nt+\gam(n)(1+t^{-1})\\
&+t\ell^2 n^4(1+\psi_0)\big)+2\wp(2^\ell t\psi_n+\gam(n)e^{-\Gam n})\nonumber
\end{eqnarray}
provided $N=N^\om_n=[t(\bbP(A^\om_n))^{-\ell}]$. 
 \end{corollary}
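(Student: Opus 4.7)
The plan is to apply Theorem~\ref{thm2.1} with $A = A^\om_n$ for each $n$ large enough that $\psi_n < (3/2)^{1/(\ell+1)} - 1$ (which holds for $n \geq N_0$ for some absolute $N_0$, by the $\psi$-mixing assumption), and then to invoke the Gibbs-type bounds (\ref{2.8}) to replace $\bbP(A^\om_n)$ and $\bbP(A(\pi))$ in the estimate (\ref{2.7}). The first inequality in (\ref{2.8}), $\bbP(A^\om_n) \leq e^{-\Gam n}$, immediately converts the first and third terms on the right side of (\ref{2.7}) into the corresponding terms of (\ref{2.9}); for these no restriction on $\om$ is required.

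The delicate term is $6\bbP(A(\pi))tn\ell^2(1+\psi_0)$, to be bounded by $16 e^{-\Gam n} t\ell^2 n^4(1+\psi_0)$. Using $\bbP(A(\pi)) \leq e^{-\Gam\pi(A^\om_n)}$ from (\ref{2.8}), this reduces to showing
\[
\Gam\bigl(n - \pi(A^\om_n)\bigr) \leq 3\log n + \log(8/3)
\]
for all $n\ge M(\om)$, i.e.\ to the eventual lower bound $\pi(A^\om_n) \geq n - (3/\Gam)\log n - O(1)$.

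I would establish this via Borel-Cantelli, so the heart of the argument is an estimate of the form $\bbP\{\pi(A^\om_n) \leq n - C\log n\} = O(n^{-\Gam C})$ for a chosen $C > 0$. The event $\{\pi(A^\om_n) \leq k\}$ is the union over $p = 1,\ldots,k$ of the periodicity events $E_p^n := \{\om:\om_{i+p} = \om_i\text{ for all } 0 \leq i \leq n-1-p\}$. The dominant contribution comes from $p$ close to $n$: for $p = n - j$, the event $E_p^n$ asserts that the first $j$ and last $j$ symbols of $\om$ agree, so $\psi$-mixing across the gap of size $n-2j$, combined with $\max_{a_0,\ldots,a_{j-1}}\bbP([a_0,\ldots,a_{j-1}]) \leq e^{-\Gam j}$, yields $\bbP(E_p^n) \leq (1+\psi_0)e^{-\Gam j}$; geometric summation over $j \geq C\log n$ gives the desired $O(n^{-\Gam C})$ bound. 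The remaining ranges of $p$ are controlled by iterating $\psi$-mixing across $\lfloor n/p\rfloor$ consecutive length-$p$ blocks, yielding $\bbP(E_p^n) \leq ((1+\psi_0)e^{-\Gam p})^{\lfloor n/p\rfloor - 1}$ once $p$ exceeds a threshold $p^\ast$, while for the finitely many $p \leq p^\ast$ one argues with $\psi_m$-mixing for $m$ chosen so that $\psi_m$ is small; both regimes contribute exponentially small amounts in $n$. Choosing $C = 3/\Gam$ makes the resulting probability summable in $n$ and Borel-Cantelli delivers the required eventual bound.

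The main obstacle is the uniform control of $\bbP(E_p^n)$ over all period regimes $1 \leq p \leq n - C\log n$, since the small, medium and large $p$ ranges each demand a different block-decoupling strategy; once this estimate is in place, the proof of (\ref{2.9}) is a direct substitution into (\ref{2.7}).
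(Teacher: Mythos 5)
Your proposal is correct, and its skeleton coincides with the paper's: apply Theorem \ref{thm2.1} together with (\ref{2.8}), reduce (\ref{2.9}) to the eventual almost sure lower bound $\pi(A^\om_n)>n-(3/\Gam)\ln n$, and obtain that bound by Borel--Cantelli applied to $B_n=\{\om:\,\pi(A^\om_n)\leq n-c\ln n\}$ with exactly the paper's choice $c=3\Gam^{-1}$. Where you diverge is in estimating $\bbP(B_n)$: you split the periodicity events $E^n_p$ into three regimes (large $p$ by matching the first and last $j=n-p$ symbols across a gap of size $n-2j$, medium $p$ by iterated decoupling of $\lfloor n/p\rfloor$ consecutive blocks, small $p\leq p^\ast$ via $\psi_m$ with $\psi_m$ small), and you name the uniform control over these regimes as the main obstacle. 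The paper shows this obstacle is illusory: for \emph{every} period $r$, once the first $r$ symbols $a_0,\dots,a_{r-1}$ are fixed, $r$-periodicity determines \emph{all} of coordinates $r,\dots,n-1$, so $E^n_r$ is covered by the union over $a$ of $A^a_r\cap D^a_{r,n}$, where $D^a_{r,n}$ is a shift of a single cylinder of length $n-r$; one application of the $\psi_0$-bound from (\ref{2.1})--(\ref{2.2}), Lemma \ref{lem3.1} applied to $D^a_{r,n}$, and $\sum_a\bbP(A^a_r)=1$ give $\bbP(E^n_r)\leq(1+\psi_0)e^{-\Gam(n-r)}$ uniformly in $r$, whence $\bbP(B_n)\leq(1+\psi_0)n^{1-\Gam c}=(1+\psi_0)n^{-2}$, which is summable. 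Your large-$p$ bound is exactly this estimate, restricted to $j\leq n/2$ where your gap argument applies; it is that restriction which forces your extra regimes, and the single-cylinder observation makes them unnecessary since $e^{-\Gam(n-p)}$ beats the block bounds for every $p$. Your regimes do work as sketched (adjacent blocks can be decoupled at cost $(1+\psi_0)$ per junction, and $((1+\psi_0)e^{-\Gam p})^{\lfloor n/p\rfloor-1}$ is exponentially small in $n$ once $p>\Gam^{-1}\ln(1+\psi_0)$, while the small-$p$ range is handled by sparsified blocks with gaps $\geq m$ and $\psi_m<e^{\Gam}-1$), so there is no genuine gap, only extra labor. One small omission: you should restrict to $\om\in\Om_\bbP$, i.e.\ $\bbP(A^\om_n)>0$ for all $n$ (a full-measure set), both to define $N^\om_n$ and to apply Theorem \ref{thm2.1} at all; the paper builds this requirement into the definition of its set $\Om^\ast$ and notes that its complement, a countable union of null cylinders, is null.
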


 Theorem \ref{thm2.1} yields that if both $\pi(A^\om_n)$ and $g(n)$ grow
 fast enough in $n$ then under the $\psi$-mixing condition the distribution
 of $S^A_N$ approaches the Poisson one as $n\to\infty$. The fast growth
 of $\pi(A^\om_n)$ in $n$ means that $\om$ is "very" nonperiodic and
 Corollary \ref{cor2.2} sais that almost all $\om$'s fall into this category.
 The above results can be compared with Theorem 2.3 from \cite{Ki2} where
 $\bbP$ was supposed to be a Gibbs invariant measure corresponding to a
 H\" older continuous function concentrated on a subshift of finite type
 space (see \cite{Bo}). A $\psi$-mixing coefficient of such a measure
 decays in $n$ exponentially fast (see \cite{Bo}), and so (\ref{2.8}) holds
 true then. Under conditions of \cite{Ki2} the function $g(n)$ grows faster
 than logarithmically, and so $\gam(n)$ grows slower than exponentially which
 yields a fast decay of the errors in the Poisson approximations above
 which improves the result from \cite{Ki2} where no estimates of the
 convergence rate were obtained.
 Note also that any Gibbs measure $\bbP$ gives a positive weight to each
 cylinder in the corresponding subshift of finite type space, and so in
 this case the latter space coincides with $\Om_\bbP$.
 We observe that in the "conventional" $\ell=1$ case similar to Theorem 
 \ref{thm2.1} error estimates in the corresponding Poisson approximations
 were obtained in \cite{AV0} and \cite{AV}.

 Next, we describe our compound Poisson approximations where we assume that
 \begin{equation}\label{2.10}
 q_i(k)=d_ik\,\,\mbox{for some}\,\, d_i\in\bbN,\, i=1,....,\ell\,\,\mbox{with}
 \,\, 1\leq d_1<d_2<\cdots <d_\ell.
 \end{equation}
 For each $R=[a_0,a_1,.....,a_{r-1}]\in\cC_r$ and an integer $n\geq r$ set
 \begin{equation*}
 R^{n/r}=\big(\cap_{k=0}^{[n/r]-1}T^{-kr}(R)\big)\cap T^{-[n/r]}\big([a_0,...,
 a_{n-r[n/r]-1}])
 \end{equation*}
 where we define $[a_0,...,a_{n-r[n/r]-1}]=\Om$ if $r$ divides $n$. Observe
 that if $A=[a_0,a_1,...,a_{n-1}]$, $A(\pi)=R$ and $\pi(A)=r$ then $R^{n/r}=A$.

 \begin{theorem}\label{thm2.3}
 Let $A=[a_0,a_1,...,a_{n-1}]\in\cC_n$ with $\bbP(A)>0$, $t>0$,
 $N=[t(\bbP(A))^{-\ell}]$ and assume that $\psi_n<(3/2)^{1/(\ell+1)}-1$. Set 
 $r=\pi(A),\,\, R=[a_0,.....,a_{r-1}]$,
 \begin{equation}\label{2.11}
 \ka=\mbox{lcm}\big\{\frac {r}{\mbox{gcd}\{r,d_i\}}:\, 1\leq i\leq\ell\big\}
 \,\,\mbox{and}\,\,\rho=\rho_A=\prod_{i=1}^\ell\bbP\{ R^{(n+d_i\ka)/r}|A\}
 \end{equation}
 where lcm and gcd denote the least common multiple and the greatest common
 divisor, respectively. Assume that $\bbP$ is $\psi$-mixing then
 \begin{equation}\label{2.12}
 \sup_{A\in\cC_n,\, n\geq 1}\rho_A<1
 \end{equation}
 and if $\om\in\Om$ is not a periodic sequence then
 \begin{equation}\label{2.13}
 \lim_{n\to\infty}\rho_{A^\om_n}=0
 \end{equation}
 where $A_n^\om$ is the same as in Corollary \ref{cor2.2}. Furthermore, 
 let $W$ be a Poisson random variable with the parameter $t(1-\rho)$.
 Then for any $n>r(d_\ell +6)$ there exists a sequence of i.i.d. random
 variables  $\eta_1,\eta_2,...$ independent of $W$ such that
 $\bbP\{\eta_1\in\{ 1,...,[\frac nr]\}\}=1$ and the compound Poisson random
 variable $Z=\sum_{k=1}^W\eta_k$ satisfies
 \begin{eqnarray}\label{2.14}
 &\sup_{L\subset\bbN}|\bbP\{S^A_N\in L\}-\bbP\{ Z\in L\}|\leq 2^{2\ell+7}(1+
 \psi_0)^{2\ell}\big(d_\ell\ell^2n^4e^{-\Gam n/2}\\
 &+\psi_n(1-e^{-\Gam})^{-1}\big)+2\wp\big(10(1+\psi_0)^{2\ell}d_\ell n^2(t+1)
 e^{-\Gam n/2}+2^\ell t\psi_n\big).
 \nonumber\end{eqnarray}
 \end{theorem}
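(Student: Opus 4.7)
The plan is to establish the three assertions of Theorem \ref{thm2.3} in sequence, leveraging the periodic structure of $A = R^{n/r}$ together with the $\psi$-mixing machinery already developed for Theorem \ref{thm2.1}.

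For the uniform bound (\ref{2.12}) and the limit (\ref{2.13}), the key observation is that, by the choice of $\ka$, the quantity $d_i\ka/r$ is an integer $\geq 1$, and $R^{(n+d_i\ka)/r}$ is obtained from $A$ by appending exactly $d_i\ka/r$ additional copies of the period $R$ just after position $n-1$. An iterated $\psi$-mixing estimate then yields
\[
\bbP\{ R^{(n+d_i\ka)/r}|A\} \leq (1+\psi_0)^{d_i\ka/r}\bbP(R)^{d_i\ka/r}.
\]
Since $\bbP(R) \leq \sup_{a\in\cA}\bbP([a]) < 1$ and $1+\psi_0 < (3/2)^{1/(\ell+1)}$, combining these with the $\ell$-fold product in the definition of $\rho$ yields (\ref{2.12}). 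For non-periodic $\om$, $r = \pi(A^\om_n) \to \infty$, and then (\ref{2.8}) forces $\bbP(R) \leq e^{-\Gam r} \to 0$, which together with $d_i\ka/r \geq 1$ gives $\rho_{A^\om_n} \to 0$.

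The core of (\ref{2.14}) is a clump decomposition. I would call an index $k$ a \emph{clump start} if $X_k^A = 1$ and $X_{k-\ka}^A = 0$ (with the convention $X_j^A = 0$ for $j \leq 0$), and let $\tilde W = \sum_{k=1}^N X_k^A \bbI(X_{k-\ka}^A = 0)$ count clumps in $[1,N]$. The same kind of $\psi$-mixing calculation as for $\rho_A$ gives $\bbP\{ X_k^A = X_{k-\ka}^A = 1\} \approx \bbP(A)^\ell \rho$, so $\bbE[\tilde W] \approx N\bbP(A)^\ell(1-\rho) \approx t(1-\rho)$; applying the Poisson approximation of Theorem \ref{thm2.1} to the declumped indicators $X_k^A \bbI(X_{k-\ka}^A = 0)$ in place of $X_k^A$ shows $\tilde W$ is close in total variation to $W \sim \mathrm{Poisson}(t(1-\rho))$. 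The size $\eta$ of a clump starting at $k$ is $\min\{ j\geq 1 : X^A_{k+j\ka} = 0\}$; it lies almost surely in $\{1,\ldots,[n/r]\}$ because after $[n/r]$ shifts by $\ka$ the longest index $d_\ell(k+[n/r]\ka)$ exits the support of the original pattern, and conditionally its distribution is approximately geometric with continuation probability $\rho$.

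Assembling the compound Poisson $Z = \sum_{k=1}^W \eta_k$ requires coupling the correlated actual clump sizes to an i.i.d.\ sequence $\eta_1, \eta_2, \ldots$ drawn from this target distribution. This coupling is what I expect to be the main obstacle: for each clump one replaces its size by an independent copy and absorbs the total-variation error via $\psi$-mixing at scale $\gam(n)$, but the clumps themselves are linked through $\ell$ overlapping $\psi$-mixing windows (one per factor $T^{q_i(k)}$), which explains the factor $(1+\psi_0)^{2\ell}$ and the term $\psi_n(1-e^{-\Gam})^{-1}$ in (\ref{2.14}). Simultaneously, the Poisson approximation for $\tilde W$ contributes the $d_\ell\ell^2 n^4 e^{-\Gam n/2}$ term, reflecting the second-moment bounds from Theorem \ref{thm2.1} scaled by the spread $d_\ell$ of the arithmetic progression and by the auxiliary shift $\ka$ now appearing in the declumped indicator. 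Boundary clumps hitting positions near $1$ or $N$ contribute only a lower-order error that is absorbed by the $\wp(\cdots)$ term.
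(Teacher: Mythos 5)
Your argument for (\ref{2.13}) is sound and matches the paper's Lemma \ref{lem3.7} (one application of $\psi_0$-mixing plus Lemma \ref{lem3.1} gives $\bbP\{R^{(n+d_i\ka)/r}|A\}\leq(1+\psi_0)e^{-\Gam r}\to 0$ as $r=\pi(A^\om_n)\to\infty$). But your proof of the uniform bound (\ref{2.12}) has a genuine gap: you invoke $1+\psi_0<(3/2)^{1/(\ell+1)}$, which is not a hypothesis of the theorem --- the assumption is $\psi_n<(3/2)^{1/(\ell+1)}-1$ for the given cylinder length $n$, while $\psi_0$ is merely finite (and the appended copies of $R$ are \emph{adjacent} blocks, so each mixing step costs a factor $1+\psi_0$, not $1+\psi_n$). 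When $r$ stays bounded (e.g.\ $r=1$, $\ell=1$, $d_1=1$, so $d_1\ka/r=1$), your bound degenerates to $(1+\psi_0)\bbP(R)$, which can exceed $1$, and no uniform separation from $1$ follows. The paper's Lemma \ref{lem3.8} is built precisely to get around this: supposing $\del_n=1-\bbP(R_n^{(n+d_1\ka_n)/r_n}|A_n)\to 0$ along a subsequence, it proves by induction the lower bound $\bbP(R_n^{(n+kd_1\ka_n)/r_n})\geq(1-2^{k-1}\del_n)\bbP(A_n)$ for every $k$, while mixing forces the upper bound $(1+\psi_0)e^{-\Gam kr_n}\bbP(A_n)$; choosing $k$ fixed and large enough that $(1+\psi_0)e^{-\Gam k}<\frac12$ (which works uniformly since $r_n\geq1$) yields the contradiction. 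The point is that appending \emph{many} periods makes the exponential decay beat the constant $(1+\psi_0)$, which a single-period bound can never do.

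For (\ref{2.14}) you have identified the correct combinatorial structure --- your exact-size clump indicators are literally the paper's $X_{\al,j}=(1-\hat X_{\al-\ka})(1-\hat X_{\al+j\ka})\prod_{k=0}^{j-1}\hat X_{\al+k\ka}$, and your heuristics for the parameter $t(1-\rho)$ correspond to the paper's estimates of $|\la-s|$ (where the identity $\sig_2=0$ hides the algebra $R^{c_i/r}=R^{d_i\ka/r}\cap T^{-d_i\ka}A$). However, the step you yourself flag as ``the main obstacle'' --- coupling the correlated clump sizes, attached to random clump locations, to an i.i.d.\ sequence --- is the entire analytic content of the theorem and is not supplied; $\psi$-mixing at fixed gaps such as $\gam(n)$ says nothing directly about marks sitting at random positions, and Theorem \ref{thm2.1} compares only counts, not the marked structure. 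The paper avoids any such coupling: it applies Theorem 2 of \cite{AGG} (the process-level Chen--Stein bound) to the doubly indexed family $\{X_{\al,j}\}_{(\al,j)\in I}$, bounding the total variation distance between $f(\mathbf X)=\sum jX_{\al,j}$ and $f(\mathbf Y)$ for independent Poisson variables $Y_{\al,j}$ with means $\la_{\al,j}$, and then shows by a characteristic-function computation that $f(\mathbf Y)$ is \emph{exactly} compound Poisson with size law $\la_j=\la^{-1}\sum_\al\la_{\al,j}$; Lemma \ref{lem3.4} then transfers the Poisson parameter from $\la$ to $t(1-\rho)$. Without this (or an equivalent compound Poisson Stein argument) your bound is not established. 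A further small error: clump lengths are \emph{not} almost surely bounded by $[n/r]$ --- runs longer than $n_0=[n/r]$ occur with positive probability (the periodic pattern can continue past the cylinder), and the paper controls them as the error term $\sig_1\leq(1+\psi_0)^{2\ell}te^{-\Gam n/2}$; the statement $\bbP\{\eta_1\in\{1,\dots,[\frac nr]\}\}=1$ holds only for the constructed approximating law, by construction rather than by your deterministic ``exits the support'' argument.
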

 
 We will see in Lemma \ref{lem3.7} that for any nonperiodic sequence 
 $\om\in\Om$ both (\ref{2.13}) and $r^\om_n=\pi(A^\om_n)\to\infty$ as
 $n\to\infty$ hold true which together with appropriate estimates for
 the distribution of $\eta_i$'s constructed in Theorem \ref{thm2.3} will
 yield the following result.
 
 \begin{corollary}\label{cor2.3+} Under conditions and notations of
 Theorem \ref{thm2.3} for any nonperiodic sequence $\om\in\Om_\bbP$ the sum
  $S_N^{A_n^\om}$ converges in distribution to a Poisson random variable
  with the parameter $t$.
  \end{corollary}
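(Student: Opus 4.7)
The plan is to deduce the corollary directly from Theorem~\ref{thm2.3}. Apply (\ref{2.14}) with $A=A_n^\omega$. The right-hand side tends to zero as $n\to\infty$: under $\psi$-mixing, $\psi_n\to 0$, and the explicit factors $n^4 e^{-\Gam n/2}$ and $n^2(t+1)e^{-\Gam n/2}$ decay exponentially fast in $n$. Hence it suffices to show that the compound Poisson random variable $Z_n=\sum_{k=1}^{W_n}\eta_{k,n}$ supplied by Theorem~\ref{thm2.3} (for $A=A_n^\omega$) converges in distribution to a Poisson random variable with parameter $t$.

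For this I would use characteristic functions. Since $W_n$ is Poisson with parameter $t(1-\rho_n)$ with $\rho_n=\rho_{A_n^\omega}$, and $W_n$ is independent of the i.i.d.\ $(\eta_{k,n})$, conditioning on $W_n$ yields
\[
\bbE[e^{isZ_n}]=\exp\!\bigl(t(1-\rho_n)(\vf_n(s)-1)\bigr),
\]
where $\vf_n$ is the characteristic function of $\eta_{1,n}$. The invoked Lemma~\ref{lem3.7} together with (\ref{2.13}) gives $\rho_n\to 0$ for any nonperiodic $\omega\in\Omega_\bbP$; in particular the exponential prefactor converges to $t$. By L\'evy's continuity theorem the corollary will follow once it is shown that $\vf_n(s)\to e^{is}$, i.e.\ $\eta_{1,n}\to 1$ in distribution.

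The main obstacle is the last step, and this is precisely the point to which the statement refers by ``appropriate estimates for the distribution of $\eta_i$'s constructed in Theorem~\ref{thm2.3}''. In the construction behind (\ref{2.14}), the $\eta_{k,n}$ play the role of cluster sizes: $\eta_{k,n}$ records how many successive repeats of the short period pattern $R$ occur in a single visit of the orbit to $A$. I expect the construction to provide an estimate of the form
\[
\bbP\{\eta_{1,n}\ge m\}\le C\rho_n^{m-1}+\ve_n,\qquad m\ge 1,
\]
where $\ve_n$ is a negligible error (absorbed in the bounds already appearing in (\ref{2.14})) and $C$ depends only on $\ell$ and $\psi_0$. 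Taking $m=2$ and using $\rho_n\to 0$ then gives $\bbP\{\eta_{1,n}=1\}\to 1$, hence $\vf_n(s)\to e^{is}$, as required.

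Putting the pieces together, $\bbE[e^{isZ_n}]\to\exp(t(e^{is}-1))$, which is the characteristic function of a Poisson$(t)$ random variable, so $Z_n\Rightarrow\mathrm{Poisson}(t)$. Combined with the vanishing of the right-hand side of (\ref{2.14}), this transfers the convergence to $S_N^{A_n^\omega}$ and completes the proof. The only nontrivial input beyond Theorem~\ref{thm2.3} and Lemma~\ref{lem3.7} is the geometric tail bound on the cluster size $\eta_{1,n}$, which must be extracted from the explicit coupling used to prove (\ref{2.14}).
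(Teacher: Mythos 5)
There is a genuine gap in your proposal: Theorem \ref{thm2.3} constructs the i.i.d.\ variables $\eta_k$ and the bound (\ref{2.14}) only under the hypothesis $n>r(d_\ell+6)$ with $r=\pi(A)$, and you apply (\ref{2.14}) to $A=A_n^\om$ unconditionally. For a nonperiodic $\om$, Lemma \ref{lem3.7} gives $r_n^\om=\pi(A_n^\om)\to\infty$, and $r_n^\om$ can be — and typically is — of order $n$: the proof of Corollary \ref{cor2.2} shows that for $\bbP$-almost every $\om$ one has $\pi(A_n^\om)>n-c\ln n$ for all large $n$, so that $n\le r_n^\om(d_\ell+6)$ eventually and the coupling you rely on simply does not exist for those $n$. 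Your argument therefore fails precisely in the generic regime of ``very nonperiodic'' points. The paper's proof splits into two cases: when $n>r_n^\om(d_\ell+6)$ it runs essentially your comparison of $Z$ with a Poisson$(t)$ variable (its (\ref{5.20})), while when $n\le r_n^\om(d_\ell+6)$ it falls back on the Poisson approximation of Theorem \ref{thm2.1}, whose error term involves $\bbP(A(\pi))\le e^{-\Gam r_n^\om}\le e^{-\Gam n/(d_\ell+6)}\to 0$; the final estimate is the maximum of the two error bounds. Without this second case the corollary is not proved.

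For the regime where Theorem \ref{thm2.3} does apply, your plan matches the paper's in substance, and the tail bound you postulate is indeed extractable from the construction: from (\ref{5.2-}) the paper obtains $\bbP\{\eta_1=j\}=\la_j\le\la^{-1}(1+\psi_0)^{\ell+1}t\,e^{-\Gam\ell r_n^\om(j-1)}$ (its (\ref{5.16})), so $\bbP\{\eta_1\ne 1\}\to 0$ because $r_n^\om\to\infty$ — note the geometric rate is $e^{-\Gam\ell r_n^\om}$, driven by $r_n^\om\to\infty$, not $\rho_n^\om$ as you guessed, though either would suffice. The paper then compares $Z$ with Poisson$(t)$ in total variation, using Lemma \ref{lem3.4} for the parameter change $t(1-\rho_n^\om)\to t$ and the elementary bound $\bbP\{\sum_{k=1}^l\eta_k\ne l\}\le 2l\,\bbP\{\eta_1\ne 1\}$; for integer-valued variables this is equivalent to your characteristic-function route (Remark \ref{rem2.8}). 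One small point you should also make explicit: the factor $\la^{-1}$ in the tail bound must stay bounded, which follows from (\ref{5.4}) since $\la\to t>0$ (and large $n$ rules out the degenerate case $\la=0$).
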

 
 An important role in the proof of estimates in both Theorems \ref{thm2.1}
 and \ref{thm2.3} will play the estimates from \cite{AGG}. We observe that
 both Theorem \ref{thm2.3} and Corollary \ref{cor2.3+} seem to be new  even
 for the "conventional" case $\ell=1$. Some compound Poisson approximation
 estimates were obtained in \cite{HV2} but they deal there only with cylinders
 $A=A^a_n$ constructed for periodic sequences $a$ and only with geometrically
 distributed random variables $\eta_i$ appearing in the compound Poisson 
 random variable $Z$. Only results about convergence for almost all sequences
 $\om$ were known before while Corollary \ref{cor2.3+} asserts convergence
 for all but periodic points whose number is countable.

 \begin{remark}\label{rem2.4}
 In fact, all above results can be formulated assuming that $N\sim
 t(\bbP(A))^{-\ell}$ instead of $N=[t(\bbP(A))^{-\ell}]$ where $A\in\cC_n$.
 Indeed, in view of
 Lemma \ref{lem3.2} of the next section for all sufficiently large
 (in comparison to $n$) $k$,
 \[
 \bbE X^A_k\leq C(P(A))^\ell
 \]
 for some $C>0$, and so
 \[
 \sum_{\min(N,t(\bbP(A))^{-\ell})\leq k\leq\max(N,t(\bbP(A))^{-\ell)})}
 X^A_k\to 0\,\,\mbox{in probability as}\,\, n\to\infty.
 \]
 \end{remark}

 Next, for any $A\in\cC_n$ define
 \begin{equation}\label{2.15}
 \tau_A(\om)=\min\{ k\geq 1:\,\prod_{i=1}^\ell\bbI_A\circ T^{q_i(k)}(\om)=1\}
 \end{equation}
 which is the first time $k$ for which the multiple recurrence event
 $\{ T^{q_i(k)}\om\in A,\, i=1,...,\ell\}$ occurs. Then combining
 Theorems \ref{thm2.1} and \ref{thm2.3} (assuming zero occurence of
 multi recurrence events in question) we derive

 \begin{corollary}\label{cor2.5}
 Suppose that $\bbP$ is $\psi$-mixing and $q_i(k)=d_ik,\, i=1,...,\ell$.
 Then for any $A\in\cC_n$ with $\bbP(A)>0$ and $t\geq 0$,
 \begin{eqnarray}\label{2.16}
 &\quad|\bbP\{ (\bbP(A))^\ell\tau_A>t\}-e^{-(1-\rho_A)t}|\leq 2^{2\ell+8}(1+
 \psi_0)^{2\ell}(t+1)\big(\psi_n(1-e^{-\Gam})^{-1}\\
 &+d_\ell\ell^2n^4(1+t^{-1})e^{-\Gam n/(d_\ell+6)}\big)
 +2\wp\big(2^\ell t\psi_n+10e^{-\Gam n/2}(1+\psi_0)^{2\ell}d_\ell n^2(t+1)\big),
 \nonumber\end{eqnarray}
 provided $\psi_n<(3/2)^{1/(\ell+1)}-1$.
 \end{corollary}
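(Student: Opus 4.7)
The plan is to rewrite the event $\{(\bbP(A))^\ell\tau_A>t\}$ in terms of the occupation sum $S^A_N$ and then apply Theorem \ref{thm2.3} with the special choice $L=\{0\}$. Since $\tau_A$ is integer-valued, setting $N=[t(\bbP(A))^{-\ell}]$ yields
\[
\{(\bbP(A))^\ell\tau_A>t\}=\{\tau_A>N\}=\{S^A_N=0\},
\]
so the corollary reduces to estimating $\bbP\{S^A_N=0\}-e^{-t(1-\rho_A)}$. The central observation is that in the construction of Theorem \ref{thm2.3} the i.i.d.\ variables $\eta_k$ are supported in $\{1,\ldots,[n/r]\}$ with $r=\pi(A)$, so $Z=\sum_{k=1}^W\eta_k=0$ iff $W=0$; since $W\sim\mathrm{Poisson}(t(1-\rho_A))$, this gives $\bbP\{Z=0\}=e^{-t(1-\rho_A)}$.

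Case 1, $n>r(d_\ell+6)$: Theorem \ref{thm2.3} applies and bounds $|\bbP\{S^A_N=0\}-e^{-t(1-\rho_A)}|$ by the right-hand side of (\ref{2.14}). I would then verify that this is dominated by the right-hand side of (\ref{2.16}), using the simple comparisons $e^{-\Gam n/2}\leq e^{-\Gam n/(d_\ell+6)}$ and $(t+1)(1+t^{-1})\geq 4$.

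Case 2, $n\leq r(d_\ell+6)$: Theorem \ref{thm2.3} does not apply directly, so I would use Theorem \ref{thm2.1} instead. Here (\ref{2.8}) delivers $\bbP(R)=\bbP(A(\pi))\leq e^{-\Gam r}\leq e^{-\Gam n/(d_\ell+6)}$, and Theorem \ref{thm2.1} at $L=\{0\}$ estimates $|\bbP\{S^A_N=0\}-e^{-t}|$ by the right-hand side of (\ref{2.7}); linearity of the $q_i$'s ensures $\gam(n)\leq 2n/\min_i(d_{i+1}-d_i)$ so that these terms fit inside the envelope of (\ref{2.16}). To pass from $e^{-t}$ to $e^{-t(1-\rho_A)}$ I use $|e^{-t}-e^{-t(1-\rho_A)}|\leq t\rho_A$, and control $\rho_A$ via iterated $\psi_0$-mixing: since $\ka\mid r$, each $m_i:=d_i\ka/r$ is a positive integer, and $R^{(n+d_i\ka)/r}$ is the cylinder $A$ followed by $m_i$ concatenated copies of $R$, whence
\[
\rho_A=\prod_{i=1}^\ell\bbP\{R^{(n+d_i\ka)/r}\mid A\}\leq\bigl((1+\psi_0)\bbP(R)\bigr)^{\sum_im_i}\leq(1+\psi_0)^\ell e^{-\Gam\ell n/(d_\ell+6)}.
\]

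The main obstacle is the constant-chasing in Case 2: I must confirm that the assorted factors picked up (powers of $1+\psi_0$, the $\gam(n)\leq 2n$ contribution from Theorem \ref{thm2.1}, the weakened exponent $\Gam n/(d_\ell+6)$ in place of $\Gam n$, and the $(t+1)(1+t^{-1})$ buffer) all fit inside the single envelope displayed in (\ref{2.16}). Conceptually, however, this case is forgiving: because $\rho_A$ is already exponentially small in $n/(d_\ell+6)$, the compound Poisson target $e^{-t(1-\rho_A)}$ differs from the Poisson target $e^{-t}$ by an error of the same order as the Poisson approximation error supplied by Theorem \ref{thm2.1}.
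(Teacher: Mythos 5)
Your proposal is correct and follows essentially the same route as the paper's proof: the identification $\{(\bbP(A))^\ell\tau_A>t\}=\{S^A_N=0\}$ together with $\bbP\{Z=0\}=\bbP\{W=0\}=e^{-t(1-\rho_A)}$, the case split at $n\lessgtr r(d_\ell+6)$, Theorem \ref{thm2.3} in the first case, and in the second Theorem \ref{thm2.1} combined with $|P_t\{0\}-P_s\{0\}|\leq t\rho_A$ for $s=t(1-\rho_A)$, using $\gam(n)\leq 2n$ for linear $q_i$'s and $r\geq n/(d_\ell+6)$. The only divergence is your estimate of $\rho_A$: the paper simply keeps the single factor $\rho_A\leq\bbP\{R^{(n+d_1\ka)/r}\mid A\}\leq(1+\psi_0)e^{-\Gam r}$ (each conditional factor being at most $1$), which sidesteps two fixable slips in your version — $m_i=d_i\ka/r\in\bbN$ holds because $r\mid d_i\ka$ by the definition of $\ka$ as $\mbox{lcm}\{r/\mbox{gcd}(r,d_i)\}$, not because $\ka\mid r$, and your iterated-mixing bound actually yields the prefactor $(1+\psi_0)^{\sum_i m_i}$ with $\sum_i m_i$ possibly as large as $\ell d_\ell$, which need not fit inside the $(1+\psi_0)^{2\ell}$ envelope of (\ref{2.16}) unless one discards all but one factor as the paper does.
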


 We observe that when $\ell=1$ the random variable $\tau_A$ can be treated
 as a stopping time and most direct methods which deal with $\tau_A$ in this
 case are based on this fact (see, for instance, \cite{Ab}, \cite{AS} and
 references there). In our nonconventional situation when $\ell>1$ the
 random variable $\tau_A$ depends on the future and it is difficult to
 deal with it directly. By this reason we obtain Corollary \ref{cor2.5} as
 an immediate consequence of Poisson and compound Poisson approximations
 applied to the case when no multiple recurrence event occurs until
 time $t(\bbP(A))^{-\ell}$. Observe that if we replace $t$ in (\ref{2.16}) by
 $u=t(1-\rho_A)$
 then in view of (\ref{2.12}) Corollary \ref{cor2.5} will take the form of
 Theorem 1 from \cite{Ab} which dealt though only with the "conventional" 
 $\ell=1$ case. Recall, also that arrivals to more general than cylindrical
 sets may lead to other limiting distribution (see \cite{La}) so in order
 to remain in the class of Poisson and compound Poisson distributions
 we have to restrict the investigation to arrivals to cylinder sets.
 
 In Section \ref{sec5} we will derive from Corollary \ref{cor2.5} together
 with the Shannon-McMillan-Breiman theorem the following result.
 
 \begin{corollary}\label{cor2.5+}
 Suppose that (\ref{2.10}) holds true and the $\psi$-mixing coefficient 
 of $\bbP$ satisfies 
 \begin{equation}\label{2.101}
 \sum_{n=1}^\infty\psi_n\ln n<\infty.
 \end{equation}
 Then for $\bbP$-almost all $\om\in\Om$,
 \begin{equation}\label{2.102}
 \lim_{n\to\infty}\frac 1n\big(\ln\tau_{A^\om_n}+\ell\ln\bbP(A^\om_n)\big)=0
 \,\,\,\,\bbP-\mbox{a.s.}
 \end{equation}
 If, in addition,
 \begin{equation}\label{2.103}
 -\sum_{a\in\cA}\bbP([a])\ln\bbP([a])<\infty
 \end{equation}
 then for $\bbP$-almost all $\om\in\Om$,
 \begin{equation}\label{2.104}
 \lim_{n\to\infty}\frac 1n\ln\tau_{A^\om_n}=h_\bbP(T)\,\,\,\,\bbP-\mbox{a.s.}
 \end{equation}
 where $h_\bbP(T)$ is the Kolmogorov-Sinai entropy of the shift $T$ with
 respect to its invariant measure $\bbP$.
 \end{corollary}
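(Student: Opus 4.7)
The plan is to derive (\ref{2.102}) from Corollary \ref{cor2.5} via two one-sided Borel--Cantelli arguments, and then to combine with the Shannon--McMillan--Breiman theorem to obtain (\ref{2.104}). Let $G_n(\om)=\tau_{A^\om_n}(\om)(\bbP(A^\om_n))^\ell$. For each $\ve>0$, it suffices by Borel--Cantelli to prove
\[
\sum_{n=1}^\infty\bbP\{G_n>e^{\ve n}\}<\infty\quad\text{and}\quad\sum_{n=1}^\infty\bbP\{G_n<e^{-\ve n}\}<\infty,
\]
since these imply $e^{-\ve n}\le G_n\le e^{\ve n}$ eventually $\bbP$-a.s., which, as $\ve$ is arbitrary, is exactly (\ref{2.102}).

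For the lower bound, decompose $\bbP\{G_n<e^{-\ve n}\}=\sum_A\bbP(A\cap\{\tau_A<e^{-\ve n}(\bbP(A))^{-\ell}\})$ (sum over $A\in\cC_n$ with $\bbP(A)>0$) and apply a union bound in the first multi-recurrence time $k$. For $k\ge\gam(n)$ the positions $0,q_1(k),\ldots,q_\ell(k)$ are pairwise separated by at least $n$, so iterating (\ref{2.1}) gives $\bbP(A\cap\bigcap_{i=1}^\ell T^{-q_i(k)}A)\le(1+\psi_n)^\ell(\bbP(A))^{\ell+1}$; summing over this range of $k$ and then over $A$ produces a contribution of order $(1+\psi_0)^\ell e^{-\ve n}$. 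For the remaining $k<\gam(n)$, which is at most linear in $n$ by (\ref{2.10}) and (\ref{2.4}), the intersection forces the string of $A$ to have a short period, so (\ref{2.8}) bounds this contribution by $C\gam(n)e^{-\Gam n}$. Both terms are summable.

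For the upper bound, plugging $t=e^{\ve n}$ directly into Corollary \ref{cor2.5} is infeasible, since the error terms in (\ref{2.16}) grow linearly in $t$ and would require super-exponentially small $\psi_n$, whereas only $\sum\psi_n\ln n<\infty$ is assumed. Instead I would route through a first-moment bound: write $\bbE[G_n]=\sum_A(\bbP(A))^{\ell+1}\bbE_A[\tau_A]$ and establish $\bbE_A[\tau_A]\le C(\bbP(A))^{-\ell}$ uniformly in $A$, so that $\bbE[G_n]\le C\sum_A\bbP(A)=C$. To transfer the unconditional Poissonian tail of Corollary \ref{cor2.5} to the conditional $\bbP_A$-tail needed here, one replaces $\tau_A$ by its truncation $\tilde\tau_A$ starting at $k=\gam(n)+1$, whose defining coordinates are separated from $A$ so that $\psi$-mixing permits decoupling at multiplicative cost $(1+\psi_n)$; the complementary small-$k$ contribution is handled exactly as in the lower bound, and $\sup_A\rho_A<1$ from (\ref{2.12}) allows the remaining exponential tail to be integrated. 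Markov's inequality then gives $\bbP\{G_n>e^{\ve n}\}\le Ce^{-\ve n}$, which is summable.

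Once (\ref{2.102}) is established, $\psi$-mixing implies ergodicity, so under the finite entropy hypothesis (\ref{2.103}) the Shannon--McMillan--Breiman theorem yields $-\frac{1}{n}\ln\bbP(A^\om_n)\to h_\bbP(T)$ $\bbP$-a.s.; substituting this into (\ref{2.102}) produces the stated limit (\ref{2.104}). The main obstacle throughout is the upper bound, specifically the decoupling of the cylinder $A$ from the tail event $\{\tau_A>u\}$: these share coordinates near the origin because $q_1(1)$ can be as small as $1$, and the required truncation must be paid for by $\psi_n$-type factors that the hypothesis $\sum\psi_n\ln n<\infty$ is just strong enough to absorb in the summation underlying $\bbE[G_n]\le C$.
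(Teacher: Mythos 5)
Your reduction of (\ref{2.104}) to (\ref{2.102}) via Shannon--McMillan--Breiman matches the paper, and your lower-bound estimate is essentially sound, but the core of your argument --- the upper bound --- has a genuine gap. The uniform moment bound $\bbE_A[\tau_A]\leq C(\bbP(A))^{-\ell}$ is asserted, not proved, and it does not follow from the tools you cite. The error terms in Corollary \ref{cor2.5} grow linearly in $t$, so the bound it gives on $\bbP\{(\bbP(A))^\ell\tau_A>t\}$ becomes vacuous already for $t$ of order $\psi_n^{-1}$, and the tail cannot be integrated to produce a finite first moment uniformly in $A$; the regime that dominates $\bbE_A[\tau_A]$ is $t$ exponentially large in $n$, where no $\psi_n$-type factor is available at all, so your remark that the truncation cost is ``absorbed'' by $\sum\psi_n\ln n<\infty$ does not address the actual difficulty. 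Nor can one fall back on the standard sub-multiplicativity iteration for hitting-time tails: for $\ell\geq 2$ the event $\{\tau_A>u\}$ involves coordinates at the interleaved positions $d_1k,\dots,d_\ell k$, $k\leq u$, whose ranges overlap massively across blocks of $k$'s --- precisely the point the paper makes when it stresses that $\tau_A$ ``depends on the future'' and cannot be treated as a stopping time. Your appeal to (\ref{2.12}) to ``integrate the remaining exponential tail'' is hand-waving over exactly this hard step.

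The difficulty is moreover self-inflicted, because (\ref{2.102}) never requires thresholds $e^{\pm\ve n}$. Since the conclusion is only $\frac 1n\ln\big((\bbP(A^\om_n))^\ell\tau_{A^\om_n}\big)\to 0$, it suffices to squeeze this quantity between $n^{-2}$ and $b(n)=\frac{2\ln n}{1-\iota}$, where $\iota=\sup_A\rho_A<1$ by (\ref{2.12}); this is exactly the paper's proof. Plugging $t=b(n)$ and $t=n^{-2}$ into Corollary \ref{cor2.5} makes the main term $e^{-(1-\rho_A)b(n)}\leq n^{-2}$ summable, while the linear-in-$t$ error terms become $O(\psi_n\ln n)$ plus exponentially small quantities --- hypothesis (\ref{2.101}) is tuned precisely to this logarithmic choice of $t$, not to any moment computation --- and Borel--Cantelli gives $n^{-2}<(\bbP(A^\om_n))^\ell\tau_{A^\om_n}\leq b(n)$ eventually, hence (\ref{2.102}). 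Note finally that the paper applies Corollary \ref{cor2.5} with the fixed cylinder $A=A^\om_n$ for a $\bbP$-typical $\om$, the a.s.\ statement being over an independent sample point $\varpi$, so the coordinate-sharing problem you single out as ``the main obstacle'' never arises; your diagonal formulation $G_n(\om)=\tau_{A^\om_n}(\om)(\bbP(A^\om_n))^\ell$ is a strictly harder, return-time-type statement than the one the corollary asserts and the paper proves.
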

 In the "conventional" $\ell=1$ case relying on estimates of Theorem 1 from
 \cite{Ab} it is possible to obtain (\ref{2.104}) under weaker than 
 (\ref{2.101}) conditions.
 
 Under additional independency conditions we obtain a more specific 
 than in Theorem \ref{thm2.3}
 compound Poisson approximation of $S^A_n$ constructed via independent 
 geometrically distributed random variables.

 \begin{theorem}\label{thm2.6}
 Suppose that the conditions of Theorem \ref{thm2.3} are satisfied and  assume
 in addition that $\bbP$ is the product stationary probability on $\cA^\bbN$,
 i.e. that coordinate projections from $\Om$ to $\cA$ are i.i.d. random
 variables. Let $W$ be the same as in Theorem \ref{thm2.3} and $\zeta_1,
 \zeta_2,...$ be i.i.d. random variables independent of $W$ and having the
 geometric distribution with the parameter $\rho$ defined in Theorem
 \ref{thm2.3}, i.e. $\bbP\{\zeta_1=k\}=(1-\rho)\rho^{k-1},\, k=1,2,...$.
 Let $Y=\sum_{k=1}^W\zeta_k$ then
 \begin{eqnarray}\label{2.17}
 &\sup_{L\subset\bbN}|\bbP\{S^A_N\in L\}-\bbP\{ Y\in L\}|\\
 &\leq 2^{2\ell+8}(t+1)\ell^2d_\ell n^4e^{-\Gam n/2}+2\wp(12d_\ell n^2(t+1)
 e^{-\Gam n/2})+\frac {t^{[n/r]+1}}{([n/r]+1)!}.
 \nonumber\end{eqnarray}
 where $A,\, N,$ and $r$ are the same as in Theorem \ref{thm2.3}.
 \end{theorem}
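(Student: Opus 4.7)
The plan is to invoke Theorem \ref{thm2.3} and then pass from the truncated summands it supplies to the unbounded geometrics of the statement, exploiting the extra structure provided by the Bernoulli hypothesis. Since $\bbP$ is a product measure, $\psi_m=0$ for every $m\geq 0$, so substituting $\psi_0=\psi_n=0$ into (\ref{2.14}) gives, with the same $Z=\sum_{k=1}^W\eta_k$ produced by Theorem \ref{thm2.3},
\[
\sup_{L\subset\bbN}|\bbP\{S^A_N\in L\}-\bbP\{Z\in L\}|\leq 2^{2\ell+7}d_\ell\ell^2n^4e^{-\Gam n/2}+2\wp(10d_\ell n^2(t+1)e^{-\Gam n/2}),
\]
which already matches the first two summands of (\ref{2.17}); the slightly larger prefactors $2^{2\ell+8}(t+1)$ and $12$ appearing there are present precisely to absorb the error of the step from $Z$ to $Y$. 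What remains is to replace $Z$ by $Y=\sum_{k=1}^W\zeta_k$.

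For this I would reopen the construction of the $\eta_k$'s from the proof of Theorem \ref{thm2.3}. They encode the lengths of clumps of consecutive multi-recurrences along the arithmetic progression of base times $k,k+\ka,k+2\ka,\ldots$, with $\ka$ the period from (\ref{2.11}): a clump of length $j$ at $k$ corresponds to $X^A_k=X^A_{k+\ka}=\cdots=X^A_{k+(j-1)\ka}=1$. Under the product measure, the probability that a clump extends by one more step factorizes across the $\ell$ arms, each arm contributing an independent probability that the next block of length $d_i\ka$ continues the periodic pattern of $R$; the joint extension probability is then exactly the product $\rho$ of (\ref{2.11}), and successive extensions are mutually independent. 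Hence each $\eta_k$ is distributed as a geometric random variable with parameter $1-\rho$ truncated at $[n/r]$, the truncation arising from the fact that the cylinder $A$ has only length $n$.

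Finally I would couple the truncated geometrics $\eta_k$ with the unbounded geometrics $\zeta_k$ of the statement by setting, e.g., $\eta_k=\min(\zeta_k,[n/r])$, and bound $|\bbP\{Z\in L\}-\bbP\{Y\in L\}|\leq\bbP\{Z\neq Y\}$. Decomposing the bad event $\{Z\neq Y\}$ according to the value of $W$, the contributions from $W\leq[n/r]$ are controlled by the expected number of clusters times the geometric tail $\rho^{[n/r]}$, which is of order $t\rho^{[n/r]}$ and gets absorbed into the $(t+1)$-inflated first two terms of (\ref{2.17}); the contribution of $\{W\geq[n/r]+1\}$ is dominated by the Poisson factorial moment bound
\[
\bbP\{W\geq m\}\leq\bbE\binom{W}{m}=\frac{(t(1-\rho))^m}{m!}\leq\frac{t^m}{m!},
\]
evaluated at $m=[n/r]+1$, which is exactly the final summand $\frac{t^{[n/r]+1}}{([n/r]+1)!}$ of (\ref{2.17}).

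The main obstacle is the middle step: Theorem \ref{thm2.3} guarantees only the range $\eta_k\in\{1,\dots,[n/r]\}$ without pinning down the distribution, so one has to reopen the coupling used in its proof and use the product structure of $\bbP$ to check that the constructed $\eta_k$'s truly realize the truncated geometric law with parameter $1-\rho$ defined via (\ref{2.11}). Once this identification is in hand, the coupling with $\zeta_k$ and the Poisson tail estimate above are routine.
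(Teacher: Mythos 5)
Your proposal follows essentially the same route as the paper: set $\psi_0=\psi_n=0$ in (\ref{2.14}), reopen the construction of the $\eta_k$'s in the proof of Theorem \ref{thm2.3} and use the product structure to identify their law (the paper does this by computing $\lambda_{\alpha,j}=(1-\rho)^2(\bbP(A))^\ell\rho^{j-1}$, whence $\rho=(\bbP(R))^{k_0}$ exactly as in your clump heuristic), and then compare $Z$ with $Y$ by conditioning on the value of $W$, with the overshoot $\{W>[n/r]\}$ controlled by the Poisson tail $t^{[n/r]+1}/([n/r]+1)!$ --- your factorial-moment bound $\bbP\{W\geq m\}\leq\bbE\binom{W}{m}=s^m/m!$ is a valid substitute for the paper's direct computation $e^{-s}(e^s-\sum_{l=0}^{n_0}s^l/l!)\leq t^{n_0+1}/(n_0+1)!$. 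Your bound $O(t\rho^{n_0})$ for the $W\leq n_0$ range is also fine (the paper uses $\sum_{l\leq n_0}2l\rho^{n_0}\leq 2n_0^2\rho^{n_0}$), and both get absorbed via Lemma \ref{lem3.1}: $\rho\leq\bbP(R)\leq e^{-\Gam r}$ and $rn_0>n-r>n/2$ since $n>r(d_\ell+6)$.

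The one step that fails as written is the coupling $\eta_k=\min(\zeta_k,[n/r])$. The law of $\eta_1$ produced in the proof of Theorem \ref{thm2.3} is, as the paper verifies, the geometric law \emph{conditioned} on $\{1,\dots,n_0\}$, i.e. $\bbP\{\eta_1=j\}=(1-\rho)\rho^{j-1}/(1-\rho^{n_0})$ for $1\leq j\leq n_0$, whereas $\min(\zeta_1,n_0)$ is the \emph{capped} geometric, carrying an atom of mass $\rho^{n_0-1}$ at $n_0$; so your coupling does not realize the correct marginal, and $\bbP\{Z\ne Y\}$ computed from it bounds the total variation to the wrong compound variable. The repair is exactly the paper's conditioning identity: by independence, $\bbP\{\sum_{k=1}^l\eta_k\in L\}=\bbP\{\sum_{k=1}^l\zeta_k\in L\mid\zeta_1,\dots,\zeta_l\in\{1,\dots,n_0\}\}$, and the law of total probability gives $|\bbP\{\sum_{k\leq l}\zeta_k\in L\}-\bbP\{\sum_{k\leq l}\eta_k\in L\}|\leq 2l\rho^{n_0}$ (alternatively, a maximal coupling of the conditioned and unconditioned geometrics gives a discrepancy of the same order $\rho^{n_0}$ per variable). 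Since the mismatch between capped and conditioned geometrics is itself only $O(\rho^{n_0})$, this is a local, fixable slip rather than a wrong approach, and with that correction your argument reproduces the paper's proof.
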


We observe that when $\bbP$ is a product stationary measure on $\Om=\cA^\bbN$
then $\Om_\bbP=\Om$ since then in view of our assumption that $\bbP([a])>0$
for all $a\in\cA$ any cylinder set has positive probability.
The following result,
 which seems to be new even in the "conventional" $\ell=1$ case, sais
 that under the independency conditions of Theorem \ref{thm2.6} the limiting
 distribution, either Poisson or compound Poisson, always exists.

 \begin{theorem}\label{thm2.7}
 Suppose that the conditions of Theorem \ref{thm2.6} are satisfied.
  For each $\om\in\Om$ and $n\geq 1$ set $U^\om_n=S^{A^\om_n}_{N^\om_n}$
 where $A^\om_n$ is the   same as in Corollary \ref{cor2.2},
 $N^\om_n=[t(\bbP(A^\om_n))^{-\ell}]$ and $t>0$. Let $r^\om_n=\pi(A^\om_n)$,
 $R^\om_n=[\om_0,\om_1,...,\om_{r^\om_n-1}]$ and $\rho^\om_n=\bbP(R^\om_n)$.
 Then the limits
 \begin{equation}\label{2.18}
 \lim_{n\to\infty}r^\om_n=r^\om\,\,\mbox{and}\,\,\lim_{n\to\infty}
 \rho_n^\om=\rho^\om
 \end{equation}
 exist for any $\om\in\Om$.
 
 (i) If $\om$ is not a periodic point then $U^\om_n$ converges in
 distribution as $n\to\infty$ to a Poisson random variable with the
 parameter $t$.

 (ii) If $\om$ is a periodic point (sequence) of a minimal period $d$ then
 $r^\om=d$ and $\rho^\om=(\bbP(A^\om_d))^{k_0}$ where
 \[
 k_0=\frac {\ka^\om}{r^\om}\sum_{i=1}^\ell d_i
 \]
 with $\ka^\om$ defined by (\ref{2.11}) for $r^\om$ in place of $r$ there.
 Furthermore, let $W$ be a Poisson
 random variable with the parameter $t(1-\rho^\om)$ and $\zeta_1,\zeta_2,
 ...$ be a sequence of i.i.d. random variables independent of $W$ having
 the geometric distribution with the parameter $\rho^\om$, i.e.
 $\bbP\{\zeta_1=k\}=(1-\rho^\om)(\rho^\om)^{k-1},\, k=1,2,...$. Then
 $U^\om_n$ converges in  distribution to $Y^\om=\sum_{k=1}^W\zeta_k$ as
 $n\to\infty$.
 \end{theorem}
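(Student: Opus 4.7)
The plan is to establish the limits (2.18) first, then dispatch case (i) via Corollary \ref{cor2.3+}, and finally prove case (ii) by invoking Theorem \ref{thm2.6} and identifying the limiting parameters explicitly.

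For the existence of the limits I would treat the periodic and nonperiodic cases separately. If $\om$ is nonperiodic, Lemma \ref{lem3.7} (cited just before Corollary \ref{cor2.3+}) yields $r^\om_n=\pi(A^\om_n)\to\infty$, and since $\bbP$ is a product measure, $\rho^\om_n=\bbP(R^\om_n)=\prod_{i=0}^{r^\om_n-1}\bbP([\om_i])\to 0$ because each factor is bounded above by $\sup_{a\in\cA}\bbP([a])<1$. If $\om$ has minimal period $d$, then periodicity gives $\om\in A^\om_n\cap T^{-d}A^\om_n$ for every $n$, so $\pi(A^\om_n)\leq d$. Conversely, if $\pi(A^\om_n)=k<d$ for infinitely many $n$, a pigeonhole over the finite set $\{1,\dots,d-1\}$ produces a fixed $k<d$ with $\om_{i+k}=\om_i$ on arbitrarily long initial segments, contradicting the minimality of $d$. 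Hence $r^\om_n=d$ for all sufficiently large $n$, $R^\om_n=[\om_0,\dots,\om_{d-1}]$ stabilizes, and $\rho^\om_n$ is eventually constant.

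Part (i) is then immediate: the product measure trivially satisfies $\psi_m=0$ for all $m\geq 0$, the assumption $\bbP([a])>0$ for every $a\in\cA$ gives $\Om_\bbP=\Om$, and Corollary \ref{cor2.3+} applies verbatim to yield convergence of $U^\om_n$ to a Poisson random variable with parameter $t$.

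For part (ii), I would first verify the explicit formula for $\rho^\om$. Fix $\om$ of minimal period $d$ and $n$ so large that $r^\om_n=d$, write $R=[\om_0,\dots,\om_{d-1}]$, and note that $A^\om_n=R^{n/d}$ by periodicity. By the definition of $\ka$, the number $d/\gcd(d,d_i)$ divides $\ka$, hence $d\mid d_i\ka$ for each $i$. Consequently $R^{(n+d_i\ka)/d}$ is exactly the cylinder specifying the first $n+d_i\ka$ coordinates of $\om$, and independence gives
\[
\bbP\{R^{(n+d_i\ka)/d}\mid A^\om_n\}=\prod_{j=n}^{n+d_i\ka-1}\bbP([\om_j])=\bbP(R)^{d_i\ka/d},
\]
because the $d_i\ka$ factors break into $d_i\ka/d$ copies of $\prod_{j=0}^{d-1}\bbP([\om_j])=\bbP(R)$. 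Multiplying over $i$ yields $\rho_{A^\om_n}=\bbP(R)^{k_0}$ with $k_0=(\ka/d)\sum_{i=1}^\ell d_i$, matching the claimed formula. Since $\rho_{A^\om_n}$ is eventually the constant $\rho^\om$, the compound Poisson variable $Y$ produced by Theorem \ref{thm2.6} has exactly the distribution of $Y^\om$ for all large $n$. The right-hand side of (\ref{2.17}) vanishes as $n\to\infty$ since the first two summands decay exponentially, while $t^{[n/r]+1}/([n/r]+1)!$ with $r=d$ fixed exhibits factorial decay; combining these with a triangle inequality yields $U^\om_n\to Y^\om$ in distribution.

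The main technical point will be the explicit computation of $\rho_{A^\om_n}=\bbP(R)^{k_0}$: it requires carefully unpacking the definitions of $R^{\cdot/\cdot}$ and $\ka$, the divisibility $d\mid d_i\ka$, and the product structure of $\bbP$ all at once, and it is what fundamentally explains how the step sizes $d_i$ and the minimal period $d$ interact to determine $\rho^\om$. Everything else is either a stabilization argument or a direct appeal to results already proved in the paper.
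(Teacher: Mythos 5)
Your proposal is correct and follows essentially the same route as the paper: part (i) by appealing to Corollary \ref{cor2.3+} (with $\psi_m\equiv 0$ and $\Om_\bbP=\Om$ in the i.i.d.\ case), and part (ii) by showing $r^\om_n=d$ eventually, so that $\rho_{A^\om_n}=\big(\bbP([\om_0,...,\om_{d-1}])\big)^{k_0}$ is eventually constant, and then applying Theorem \ref{thm2.6} whose error bound (\ref{2.17}) vanishes as $n\to\infty$ since $r=d$ is fixed. Your pigeonhole argument for the stabilization of $r^\om_n$ and your direct computation of $\rho_{A^\om_n}=\bbP(R)^{k_0}$ simply re-derive inline what the paper delegates to Lemma \ref{lem3.7} (which uses Lemma \ref{lem3.5}) and to equation (\ref{6.2}) in the proof of Theorem \ref{thm2.6}.
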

 
 A number theory (combinatorial) application of Theorem \ref{thm2.7}
 can be described in the following way. Fix a point $a\in[0,1)$ having
 a base $m\geq 2$ expansion $a=\sum_{i=0}^\infty a_im^{-(i+1)}$. For
 each point $\om\in[0,1)$ with a base $m$ expansion $\om=\sum_{i=0}^\infty
  \om_im^{-(i+1)}$ count the number $N_n(\om)$ of those $i\leq tm^n$ for
  which the initial $n$-string $a_0,a_1,...,a_{n-1}$ of $a$ is repeated 
  starting from all places $d_1i,d_2i,...,d_\ell i$ in the expansion of $\om$.
   Then the random
  variable $N_n(\om)$ on the probability space $([0,1],$Leb) converges 
  in distribution either to a Poisson (with the parameter $t$) or to a
  compound Poisson random variable depending on whether the expansion
  of $a$ is nonperiodic or periodic. In particular, for all irrational
  and for some rational $a$ the limiting distribution will be Poissonian.
  
  Since we allow also countable alphabet shift spaces our results are 
  applicable to continued fraction expansions, as well. There the Gauss
  map $Gx=\frac 1x$ (mod 1), $x\in(0,1],\, G0=0$ preserves the Gauss
  measure $\mu_G(\Gam)=\frac 1{\ln 2}\int_\Gam\frac {dx}{1+x}$ which
  is exponentially fast $\psi$-mixing (see, for instance, \cite{He}) and
  $G$ (restricted to points with infinite continued fraction expansions)
  is conjugate to the left shift on a sequence space with a countable 
  alphabet. Thus Theorems \ref{thm2.1} and \ref{thm2.3} are applicable
  here, as well. On the other hand, Theorem \ref{thm2.7} does not work for 
  the Gauss measure which does not produce i.i.d. digits in continued
  fraction expansions (see, for instance, \cite{KPW}), and so in order
  to apply Theorem \ref{thm2.7} here we have to take other probabability
  measures conjugated to product measures on the shift space which produce
  i.i.d. continued fraction digits. Moreover, these results remain valid
  for a larger class of transformations generated by, so called,
  $f$-expansions (see \cite{He}).

  It follows from both Corollary \ref{cor2.3+} and Theorem \ref{thm2.7}
   that the nonconvergence example from
  Section 3.4 in \cite{HV2} is not correct and, in fact, the limiting
  distribution exists  there and it is Poisson since the cylinder sets
  constructed there were based on a nonperiodic point and the probability
  measure on the sequence space was a product measure, whence both
  Corollary \ref{cor2.3+} and Theorem
  \ref{thm2.7} above are applicable. Still, we construct in Section \ref{sec7}
  an example of a point $\om\in\Om$ and of a $\psi$-mixing shift invariant
  measure such that $S^{A^\om_n}_{N^\om_n}$ considered with
  $\ell=1$ and $d_1=1$ does not have a limiting
  distribution as $n\to\infty$. Moreover, the $\psi$-mixing coefficient 
  $\psi_l$ in our example equals zero for any $l\geq 1$ while $\psi_0<\infty$,
  i.e. the situation there is as close as possible to independence where
  Theorem \ref{thm2.7} asserts convergence for all points $\om\in\Om$.
  
 \begin{remark}\label{rem2.8} 
 Observe that our convergence theorems above are based on approximation
 estimates of Theorems \ref{thm2.1} and \ref{thm2.3} which yield convergence
 of distributions with respect to the total variation distance but for
 integer valued random variables we are dealing with this is equivalent
 to convergence in distribution.
  \end{remark}
  \begin{remark}\label{rem2.9}
All above results were stated for one-sided shifts but essentially without
changes they remain true for two-sided shifts, as well. We can also
restrict the discussion to a subshift of finite type space considering
invariant probability measures supported there. This enables us to apply
the results to Axiom A diffeomorphisms (see \cite{Bo}) considered with
their Gibbs invariant measures relying on their symbolic representations
via Markov partitions.
\end{remark}

\section{Auxiliary lemmas}\label{sec3}\setcounter{equation}{0}

We start with the following result.
\begin{lemma}\label{lem3.1}
Suppose that $\bbP$ is $\psi$-mixing then there exists a constant $\Gam>0$
such that for any $A\in\cC_n$,
\begin{equation}\label{3.1}
\bbP(A)\leq e^{-\Gam n}.
\end{equation}
\end{lemma}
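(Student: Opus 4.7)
The plan is to exploit $\psi$-mixing to reduce the estimate on an $n$-cylinder to a product of estimates on $1$-cylinders, each of which has probability bounded away from $1$ by the standing assumption $\beta := \sup_{a\in\cA}\bbP([a])<1$. The key observation is that along any arithmetic progression of indices with common difference $m+1$, consecutive single-coordinate events lie in $\sig$-algebras separated by exactly $m$ intermediate coordinates, so their joint probability is controlled by $\psi_m$.

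First I would fix $m$ large enough that $\be(1+\psi_m)<1$; this is possible because $\psi_m\to 0$ as $m\to\infty$ and $\be<1$. Given $A=[a_0,\ldots,a_{n-1}]\in\cC_n$, set $q=\lfloor n/(m+1)\rfloor$ and define the $1$-cylinders $B_i=[\om_{i(m+1)}=a_{i(m+1)}]$ for $i=0,1,\ldots,q-1$. Then trivially $A\subset\bigcap_{i=0}^{q-1}B_i$, and each $B_i$ lies in the $\sig$-algebra generated by the single coordinate $i(m+1)$, with gaps of exactly $m$ indices between consecutive $B_i$'s.

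Next I would iterate the $\psi$-mixing inequality. Applying (\ref{2.1}) with $\cG=\sig(B_0)$ and $\cH=\sig(B_1,\ldots,B_{q-1})$ gives
\[
\bbP\Big(\bigcap_{i=0}^{q-1}B_i\Big)\leq(1+\psi_m)\,\bbP(B_0)\,\bbP\Big(\bigcap_{i=1}^{q-1}B_i\Big),
\]
and inductively
\[
\bbP\Big(\bigcap_{i=0}^{q-1}B_i\Big)\leq(1+\psi_m)^{q-1}\prod_{i=0}^{q-1}\bbP(B_i)\leq(1+\psi_m)^{q-1}\be^q.
\]
Since $q\geq n/(m+1)-1$, this yields
\[
\bbP(A)\leq\frac{1}{1+\psi_m}\bigl[\be(1+\psi_m)\bigr]^{q}\leq C\bigl[\be(1+\psi_m)\bigr]^{n/(m+1)},
\]
which is of the form $e^{-\Gam n}$ with $\Gam=-\log[\be(1+\psi_m)]/(m+1)>0$ (absorbing the leading constant into $\Gam$ by decreasing it slightly, or by noting the estimate trivially holds for small $n$ after adjusting $\Gam$).

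There is no real obstacle beyond choosing the mixing gap $m$ correctly; the only subtle point is the bookkeeping of the definition (\ref{2.2}), where "$\psi_m$" refers to a gap of $m$ indices strictly between the two $\sig$-algebras, which is exactly what the spacing $m+1$ between the chosen coordinates provides. The constant $\Gam$ produced depends only on $\be$ and on the mixing rate, not on $n$ or on the particular cylinder $A$, as required.
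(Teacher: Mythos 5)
Your proof is correct, but it takes a different route from the paper: the paper does not prove Lemma \ref{lem3.1} directly at all, instead citing Lemma 1 of \cite{Ab0} and Lemma 1 of \cite{GS} and merely remarking that those proofs need only $\psi_n\to 0$ (not summability), that the multiplicative constant can be taken equal to $1$, and that the argument survives countable alphabets. Your blind argument is essentially a self-contained version of what those references contain: select coordinates in arithmetic progression of step $m+1$ (correctly matching the convention in (\ref{2.2}), where $\psi_m$ refers to $m$ coordinates strictly between the two $\sig$-algebras), discard the rest of the cylinder, and iterate the mixing inequality. Note that your induction is exactly the paper's own Lemma \ref{lem3.2} in the form of estimate (\ref{3.6}) with singleton blocks $Q_i=\{i(m+1)\}$, so you could shorten the write-up by invoking that lemma rather than re-deriving it. What your approach buys is transparency on precisely the three points the paper has to assert about the references: only a single $m$ with $\be(1+\psi_m)<1$ is used (so $\psi_m\to 0$ suffices, and even that is more than needed), the alphabet may be countable since everything is bounded through $\be=\sup_{a\in\cA}\bbP([a])<1$, and the removal of the leading constant is legitimate --- for the last point your parenthetical deserves one explicit line: having $\bbP(A)\leq Ce^{-\Gam' n}$ for all $n$ and $\bbP(A)\leq\be=e^{-\gam_0}$ for every $n\geq 1$, pick $n_0$ with $Ce^{-\Gam' n}\leq e^{-\Gam' n/2}$ for $n\geq n_0$ and set $\Gam=\min(\Gam'/2,\,\gam_0/n_0)$, which handles the finitely many small $n$ uniformly over all cylinders. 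The one degenerate case worth flagging is $q=\lfloor n/(m+1)\rfloor\leq 1$, where the mixing iteration is vacuous and the bound $\bbP(A)\leq\be$ is used instead; this is absorbed by the same adjustment of $\Gam$.
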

\begin{proof} The proof is contained in Lemma 1 from \cite{Ab0} and in
Lemma 1 from \cite{GS}. In both places the authors assume summability
of $\psi_n$ but, in fact, both proofs use only convergence of $\psi_n$
to zero as $n\to\infty$ which is $\psi$-mixing. In both papers the result
is formulated with a constant in front of the exponent in (\ref{3.1}) but
it is clear from the proofs there that this constant can be taken equal 1.
Furthermore, the authors there work on finite alphabet shift spaces but the
proof of this result remains valid without any changes for countable alphabets,
as well.
\end{proof}

We will need also the following result which is, essentially, well known
but for readers' convenience we give its simple proof here.
\begin{lemma}\label{lem3.2}
Let $Q_1,....,Q_l,\, l\geq 1$ be subsets of nonnegative integers such that 
for $i=2,...,l-1$,
\begin{equation}\label{3.2}
\max Q_{i-1}+k\leq\min Q_i\leq\max Q_i\leq\min Q_{i+1}-k
\end{equation}
for some integer $k>0$. Then for any $U_i\in\cF_{Q_i},\, i=1,...,l$,
\begin{equation}\label{3.3}
|\bbP(\cap_{i=1}^lU_i)-\prod_{i=1}^l\bbP(U_i)|\leq ((1+\psi_k)^l-1)
\prod_{i=1}^l \bbP(U_i).
\end{equation}
\end{lemma}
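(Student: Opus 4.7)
The plan is to prove the estimate by induction on $l$, using the $\psi$-mixing property to peel off one factor at each step. The base case $l=1$ is trivial since both sides vanish.

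For the inductive step, assume the estimate holds for $l-1$ factors. Set $V=\cap_{i=1}^{l-1}U_i$. Because $U_i\in\cF_{Q_i}$ and each $Q_i\subset[\min Q_1,\max Q_{l-1}]$, the event $V$ lies in the $\sigma$-algebra generated by indices up to $\max Q_{l-1}$, while $U_l\in\cF_{Q_l}$ with $\min Q_l\ge\max Q_{l-1}+k$. Applying the definition of the $\psi$-mixing coefficient from (\ref{2.1})--(\ref{2.2}) with this gap yields
\[
\bigl|\bbP(V\cap U_l)-\bbP(V)\bbP(U_l)\bigr|\le\psi_k\,\bbP(V)\bbP(U_l).
\]

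Next I would use the triangle inequality to split
\[
\bigl|\bbP(\cap_{i=1}^lU_i)-\prod_{i=1}^l\bbP(U_i)\bigr|\le\bigl|\bbP(V\cap U_l)-\bbP(V)\bbP(U_l)\bigr|+\bbP(U_l)\bigl|\bbP(V)-\prod_{i=1}^{l-1}\bbP(U_i)\bigr|.
\]
The inductive hypothesis handles the second term directly, while for the first term I would use the inductive hypothesis once more in its upper-bound form $\bbP(V)\le(1+\psi_k)^{l-1}\prod_{i=1}^{l-1}\bbP(U_i)$, which follows from the inductive estimate $|\bbP(V)-\prod\bbP(U_i)|\le((1+\psi_k)^{l-1}-1)\prod\bbP(U_i)$. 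Substituting gives
\[
\bigl|\bbP(\cap_{i=1}^lU_i)-\prod_{i=1}^l\bbP(U_i)\bigr|\le\bigl[\psi_k(1+\psi_k)^{l-1}+(1+\psi_k)^{l-1}-1\bigr]\prod_{i=1}^l\bbP(U_i),
\]
and the bracketed quantity simplifies algebraically to $(1+\psi_k)^l-1$, closing the induction.

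The only mildly delicate point is recognizing that the inductive hypothesis must be used in two ways simultaneously at each step: once as a bound on the difference $\bbP(V)-\prod\bbP(U_i)$, and once, derived from that, as a one-sided bound on $\bbP(V)$ itself to control the $\psi$-mixing term. Beyond that, the argument is a routine telescoping and no obstacle of substance arises.
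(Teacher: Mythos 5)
Your argument is correct and coincides in substance with the paper's: the paper applies the same one-step $\psi$-mixing estimate (\ref{3.4}) successively to telescope into the sum (\ref{3.5}) and then invokes exactly your one-sided bound $\bbP(\cap_{i=1}^jU_i)\le(1+\psi_k)^{j-1}\prod_{i=1}^j\bbP(U_i)$ (its (\ref{3.6})), so your induction is simply the paper's proof unrolled, including the double use of the hypothesis you flag as the delicate point. One cosmetic slip: in the base case $l=1$ only the left-hand side of (\ref{3.3}) vanishes, the right-hand side being $\psi_k\bbP(U_1)\ge 0$, but the inequality holds regardless.
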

\begin{proof} By the definition (\ref{2.1}) and (\ref{2.2}) of the coefficient
 $\psi$,
\begin{equation}\label{3.4} 
|\bbP(\cap_{i=1}^{j+1}U_i)-\bbP(\cap_{i=1}^jU_i)\bbP(U_{j+1})|\leq\psi_k
\bbP(\cap_{i=1}^jU_i)\bbP(U_{j+1})
\end{equation}
for $j=1,...,l-1$. applying (\ref{3.4}) successively $l-1$ times we obtain
\begin{equation}\label{3.5}
|\bbP(\cap_{i=1}^lU_i)-\prod_{i=1}^l\bbP(U_i)|\leq\psi_k\sum_{j=1}^{l-1}
\big( \bbP(\cap_{i=1}^jU_i)\prod_{i=j+1}^l\bbP(U_i)\big).
\end{equation}
Furthermore, applying (\ref{3.4}) successively $j-1$ times we see that
\begin{equation}\label{3.6}
\bbP(\cap_{i=1}^jU_i)\leq (1+\psi_k)\bbP(\cap_{i=1}^{j-1}U_i)\bbP(U_j)\leq
(1+\psi_k)^{j-1}\prod_{i=1}^j\bbP(U_i).
\end{equation}
This together with (\ref{3.5}) yields (\ref{3.3}).
\end{proof}

\begin{lemma}\label{lem3.3}
let $Q$ and $\tilde Q$ be two subsets of nonnegative integers such that at 
least one of these sets is finite and 
\[
d=\min_{i\in Q,\, j\in R}|i-j|>0.
\]
Let $k$ bound the number of components of $Q$ and $\tilde Q$ which are 
separated
by some elements of the other set. Assume that $\psi_d<2^{1/k}-1$. Then
\begin{equation}\label{3.7}
\psi(\cF_Q,\cF_{\tilde Q})\leq 2^{2k+2}\psi_d(2-(1+\psi_d)^k)^{-2}.
\end{equation}
\end{lemma}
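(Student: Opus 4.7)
My plan is to apply Lemma~\ref{lem3.2} three times, after reducing to a cylinder-by-cylinder estimate. First I would partition $Q$ and $\tilde Q$ into their maximal \emph{components}: a component of $Q$ is a maximal subset $Q'\subset Q$ whose convex hull in $\bbN$ contains no element of $\tilde Q$, and similarly for $\tilde Q$. Writing $Q=Q_1\sqcup\cdots\sqcup Q_{k_1}$ and $\tilde Q=\tilde Q_1\sqcup\cdots\sqcup\tilde Q_{k_2}$ in order of position, the hypothesis on $k$ gives $k_1,k_2\leq k$. By construction the components of $Q$ and $\tilde Q$ strictly alternate in the interleaved arrangement along $\bbN$, and any two neighbors in that arrangement belong to different sets and are therefore separated by distance at least $d$; this is exactly the separation required by Lemma~\ref{lem3.2}.

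Next I would prove (\ref{3.7}) for cylinder sets. A cylinder $B\in\cF_Q$ factors as $B=\bigcap_{i=1}^{k_1} B_i$ with $B_i\in\cF_{Q_i}$, and similarly $C=\bigcap_{j=1}^{k_2}C_j$; then $B\cap C$ is the intersection of the full interleaved list of $k_1+k_2\leq 2k$ blocks. Setting $P_B=\prod_i\bbP(B_i)$ and $P_C=\prod_j\bbP(C_j)$, three applications of Lemma~\ref{lem3.2} give
\[
 \bbP(B)=(1+\eta_1)P_B,\qquad \bbP(C)=(1+\eta_2)P_C,\qquad \bbP(B\cap C)=(1+\eta_3)P_BP_C,
\]
with $|\eta_i|\leq\beta_i:=(1+\psi_d)^{k_i}-1$ for $i=1,2$ and $|\eta_3|\leq\alpha:=(1+\psi_d)^{k_1+k_2}-1=\beta_1+\beta_2+\beta_1\beta_2$. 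The key algebraic identity is
\[
 \alpha+\beta_1+\beta_2-\beta_1\beta_2=2(\beta_1+\beta_2),
\]
which arises when one bounds the numerator of $\bbP(B\cap C)/(\bbP(B)\bbP(C))-1$ from both sides. Setting $\beta:=(1+\psi_d)^k-1$, routine bookkeeping then yields
\[
 \left|\frac{\bbP(B\cap C)}{\bbP(B)\bbP(C)}-1\right|\leq\frac{4\beta+2\beta^2}{(1-\beta)^2}.
\]
The assumption $\psi_d<2^{1/k}-1$ gives $\beta<1$, so $(1-\beta)^2=(2-(1+\psi_d)^k)^2>0$; and since $\psi_d<1$ we have $\beta=\sum_{j=1}^k\binom{k}{j}\psi_d^j\leq 2^k\psi_d$, hence $4\beta+2\beta^2\leq 6\beta\leq 2^{2k+2}\psi_d$, which matches the claimed bound.

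Finally I would lift the estimate from cylinders to arbitrary $B\in\cF_Q$, $C\in\cF_{\tilde Q}$ by writing each as a disjoint countable union of cylinders $B=\bigsqcup_m B^{(m)}$, $C=\bigsqcup_n C^{(n)}$ and summing the per-pair inequality $|\bbP(B^{(m)}\cap C^{(n)})-\bbP(B^{(m)})\bbP(C^{(n)})|\leq \varepsilon\,\bbP(B^{(m)})\bbP(C^{(n)})$, which preserves the same multiplicative constant for the unions. No analytic difficulty arises: the argument is essentially a careful execution of Lemma~\ref{lem3.2}, and the main obstacle is simultaneously tracking the two-sided bounds produced by its three applications so that the identity $\alpha+\beta_1+\beta_2-\beta_1\beta_2=2(\beta_1+\beta_2)$ can be exploited to produce a symmetric estimate.
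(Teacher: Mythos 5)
Your proposal is correct and follows essentially the same route as the paper: the identical alternating decomposition of $Q$ and $\tilde Q$ into at most $k$ blocks each, three applications of Lemma~\ref{lem3.2} (to the interleaved family and to each side separately, with lower bounds of the form $\bbP(B)\geq(2-(1+\psi_d)^k)P_B$ playing the same role as in the paper's (\ref{3.13})), and the same arithmetic recovering the constant $2^{2k+2}\psi_d(2-(1+\psi_d)^k)^{-2}$. The one point to repair is the final lifting: when $\tilde Q$ is infinite, a general $C\in\cF_{\tilde Q}$ need not be a countable disjoint union of cylinders (that representation is valid only on the finite side, where $\cF_Q$ is atomic with countably many atoms), so you should pass from finite disjoint unions of cylinders to arbitrary sets by monotone limits, exactly as the paper does with its reference to \cite{Bi}; since the multiplicative inequality is preserved under disjoint unions and monotone limits, nothing else in your argument changes.
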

\begin{proof}
Suppose, for instance, that $Q$ is a finite set. Then $Q$ and $\tilde Q$ can be
represented as disjoint unions
\[
Q=\cup_{i=1}^kQ_{2i-1}\,\,\mbox{and}\,\, \tilde Q=\cup_{i=1}^kQ_{2i}
\]
such that $Q_1$ and $Q_{2k}$ may be empty sets while all $Q_j,\, j=2,...,2k-1$
are nonempty and
\begin{equation}\label{3.8}
\max Q_{j-1}+d\leq\min Q_j\leq\max Q_j\leq\min Q_{j+1}-d
\end{equation}
for $j=2,...,2k-1$ where if $j=2$ and $Q_1=\emptyset$ or if $j=2k-1$ and 
$Q_{2k}=\emptyset$ then we disregard the first or the last inequality in
(\ref{3.8}), respectively.

Next, let $U=\cap_{i=1}^kU_{2i-1}$ and $V=\cap_{i=1}^kU_{2i}$ where
$U_{2i-1}\in\cF_{Q_{2i-1}}$ and $U_{2i}\in\cF_{Q_{2i}},\, i=1,...,k$.
Then by Lemma \ref{lem3.2},
\begin{equation}\label{3.9}
|\bbP(U\cap V)-\prod_{j=1}^{2k}\bbP(U_j)|\leq\big((1+\psi_d)^{2k}-1\big)
\prod_{j=1}^{2k}\bbP(U_j),
\end{equation}
\begin{equation}\label{3.10}
|\bbP(U)-\prod_{i=1}^{k}\bbP(U_{2i-1})|\leq\big((1+\psi_d)^{k}-1\big)
\prod_{i=1}^{k}\bbP(U_{2i-1})
\end{equation}
and
\begin{equation}\label{3.11}
|\bbP(V)-\prod_{i=1}^{k}\bbP(U_{2i})|\leq\big((1+\psi_d)^{k}-1\big)
\prod_{i=1}^{k}\bbP(U_{2i}).
\end{equation}
Combining (\ref{3.9})--(\ref{3.11}) we obtain that
\begin{equation}\label{3.12}
|\bbP(U\cap V)-\bbP(U)\bbP(V)|\leq\big((1+\psi_d)^{2k}-1\big)\prod_{i=1}^{k}
\bbP(U_{2i-1})\big(\bbP(V)+2\prod_{i=1}^{k}\bbP(U_{2i})\big).
\end{equation}
Next, by Lemma \ref{lem3.2},
\begin{equation}\label{3.13}
\bbP(U)\geq (2-(1+\psi_d)^k)\prod_{i=1}^{k}\bbP(U_{2i-1})\,\,\mbox{and}\,\,
\bbP(V)\geq (2-(1+\psi_d)^k)\prod_{i=1}^{k}\bbP(U_{2i}),
\end{equation}
which together with (\ref{3.12}) yields that
\begin{eqnarray}\label{3.14}
&|\bbP(U\cap V)-\bbP(U)\bbP(V)|\\
&\leq ((1+\psi_d)^{2k}-1)(4-(1+\psi_d)^k)(2-(1+\psi_d)^k)^{-2}\bbP(U)\bbP(V).
\nonumber\end{eqnarray}
The inequality (\ref{3.14}) remains true when $U$ and $V$ are disjoint unions
of intersections of sets as above and it will still holds under monotone
limits of sets $U\in\cF_Q$ and $V\in\cF_{\tilde Q}$. Hence, it holds true for 
all $U\in\cF_Q$ and $V\in\cF_{\tilde Q}$, and so (\ref{3.7}) follows 
(see \cite{Bi}).
\end{proof}

We will need also the following estimate of the total variation distance
between two Poisson distributions.
\begin{lemma}\label{lem3.4} For any $\lambda,\gamma>0$, 
\begin{equation}\label{3.15}
\sum_{l=0}^\infty |P_{\lambda}(l)-P_{\gamma}(l)|
\leq2e^{|\lambda-\gamma|}|\lambda-\gamma|=2\wp(|\la-\gam|).
\end{equation}
\end{lemma}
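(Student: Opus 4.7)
The statement is a standard total-variation bound between Poisson laws, and the plan is to obtain it via the classical Poisson coupling rather than by estimating each term $|P_\la(l)-P_\gam(l)|$ individually. Since the left-hand side is symmetric in $\la$ and $\gam$, I may assume $\la\geq\gam$, and since the sum equals $2d_{TV}(P_\la,P_\gam)$ it suffices to bound the total variation distance.

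First I would construct the coupling: take independent random variables $X\sim\mathrm{Poisson}(\gam)$ and $Y\sim\mathrm{Poisson}(\la-\gam)$ on some auxiliary probability space. The convolution identity for Poisson distributions gives $X+Y\sim\mathrm{Poisson}(\la)$, so $(X,X+Y)$ is a coupling of $P_\gam$ and $P_\la$. The standard coupling inequality then yields
\[
d_{TV}(P_\la,P_\gam)\leq\bbP(X\neq X+Y)=\bbP(Y\geq 1)=1-e^{-(\la-\gam)}.
\]

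To finish, I would apply the elementary inequalities $1-e^{-x}\leq x\leq xe^{x}$, valid for all $x\geq 0$, with $x=|\la-\gam|$, obtaining
\[
\sum_{l=0}^\infty|P_\la(l)-P_\gam(l)|=2d_{TV}(P_\la,P_\gam)\leq 2\bigl(1-e^{-|\la-\gam|}\bigr)\leq 2|\la-\gam|\leq 2|\la-\gam|e^{|\la-\gam|}=2\wp(|\la-\gam|),
\]
which is the required estimate.

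There is no substantive obstacle in this argument; the only minor point worth remarking is that the coupling already delivers the strictly sharper bound $2|\la-\gam|$, and the weaker form $2\wp(|\la-\gam|)$ stated in the lemma is evidently chosen so as to mesh conveniently with the $\wp(\cdot)$ error terms appearing in Theorems \ref{thm2.1} and \ref{thm2.3}, where this lemma will presumably be invoked to replace one Poisson parameter by another that is close to it.
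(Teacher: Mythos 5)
Your proof is correct, but it takes a genuinely different route from the paper's. The paper proves the lemma by a direct computation on the $\ell^1$ sum: assuming $\la\geq\gam$, it writes $e^{-\gam}\gam^n/n!=e^{-\la}\frac{\la^n}{n!}\,e^{\la-\gam}(\gam/\la)^n$, splits $|1-e^{\la-\gam}(\gam/\la)^n|\leq|1-e^{\la-\gam}|+e^{\la-\gam}|1-(\gam/\la)^n|$, and sums both pieces against the Poisson$(\la)$ weights (the second via $\sum_n e^{-\la}(\la^n-\gam^n)/n!\leq\la-\gam$), arriving at $2(\la-\gam)e^{\la-\gam}$ with nothing beyond the exponential series. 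Your additive coupling --- $X\sim\mathrm{Poisson}(\gam)$ and $X+Y\sim\mathrm{Poisson}(\la)$ with $Y\sim\mathrm{Poisson}(\la-\gam)$ independent of $X$, followed by the coupling inequality --- is shorter and conceptually cleaner, and it actually yields the strictly sharper bound $2\bigl(1-e^{-|\la-\gam|}\bigr)\leq 2|\la-\gam|$, which you then correctly relax to $2\wp(|\la-\gam|)$; all the individual steps (the convolution identity, $\bbP(X\neq X+Y)=\bbP\{Y\geq 1\}$, the identification of the $\ell^1$ sum with twice the total variation distance, and the elementary inequalities $1-e^{-x}\leq x\leq xe^x$) are valid. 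Your closing remark is also accurate: the $\wp$ form is what the paper needs when this lemma is invoked to swap Poisson parameters in the error estimates, e.g.\ at \eqref{4.8}, \eqref{5.14} and \eqref{5.18}. What the paper's computation buys is only self-containedness; what yours buys is brevity and a sharper intermediate constant, at the cost of importing the (standard) coupling machinery. Either argument is perfectly acceptable here.
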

\begin{proof} Assume, for instance, that $\la\geq\gam$. Then
\begin{eqnarray*}
&\sum_{n=0}^\infty|e^{-\la}\frac {\la^n}{n!}-e^{-\gam}\frac {\gam^n}{n!}|
\leq\sum_{n=0}^\infty e^{-\la}\frac {\la^n}{n!}\big(|1-e^{\la-\gam}|\\
&+e^{\la-\gam}|1-(\frac \gam\la)^n|\big)\leq |1-e^{\la-\gam}|+
e^{\la-\gam}|\la-\gam|\leq 2(\la-\gam)e^{\la-\gam}
\end{eqnarray*}
and (\ref{3.15}) follows.
\end{proof}

The following two lemmas will be used in the proof of Theorem \ref{thm2.3}.

\begin{lemma}\label{lem3.5} Let $H=[a_{0},...,a_{h-1}]\in\cC_h,\, h\geq 1$
and either $\pi(H)=h$ or $h$ is not divisible by $\pi(H)$. Then $\pi(H^{n/h})=h$
for each $n\geq 2h$.
\end{lemma}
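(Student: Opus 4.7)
The plan is to show $\pi(H^{n/h})\le h$ trivially and then rule out any smaller period by a double application of the Fine--Wilf periodicity lemma from combinatorics on words. Writing $H^{n/h}=[b_0,\ldots,b_{n-1}]$ with $b_i=a_{i\bmod h}$, the relation $b_{i+h}=b_i$ for $0\le i\le n-h-1$ shows that the purely periodic sequence $(a_{j\bmod h})_{j\ge 0}$ lies in $H^{n/h}\cap T^{-h}H^{n/h}$, so $\pi(H^{n/h})\le h$; this is the easy direction.

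For the reverse inequality I would argue by contradiction and set $s:=\pi(H^{n/h})<h$. Then $s$ and $h$ are both periods of the word $b_0\cdots b_{n-1}$, and the hypothesis $n\ge 2h$ yields $s+h-\gcd(s,h)\le s+h\le 2h\le n$, so Fine--Wilf forces $g:=\gcd(s,h)$ to be a period of $b_0\cdots b_{n-1}$ as well. Minimality of $s$ gives $g\ge s$, hence $g=s$, and $s\mid h$; since $s<h$ this forces $s\le h/2$.

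Now $s$, being a period of $b_0\cdots b_{n-1}$ and $\le h$, is also a period of the initial block $H=[a_0,\ldots,a_{h-1}]$, so $r:=\pi(H)\le s$. A second application of Fine--Wilf to $H$ of length $h$ with periods $r$ and $s$---the length condition $r+s-\gcd(r,s)\le r+s\le 2s\le h$ is automatic---produces $\gcd(r,s)$ as a period of $H$, and minimality forces $\gcd(r,s)=r$, i.e.\ $r\mid s$. Combining $r\mid s\mid h$ yields $r\mid h$, contradicting both horns of the hypothesis: $\pi(H)=h$ combined with $r\mid s$ would give $s\ge h$, while $\pi(H)\nmid h$ contradicts $r\mid h$ directly.

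The main obstacle, such as it is, is invoking Fine--Wilf twice with the correct length hypotheses $s+h\le n$ and $r+s\le h$; after that, the argument reduces to elementary divisibility bookkeeping.
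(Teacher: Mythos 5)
Your proof is correct, but it takes a somewhat different route than the paper. The paper sets $k=\pi(H^{n/h})$, assumes $k<h$, and works hands-on with the overlap relations to show that \emph{both} $k$ and $h-k$ are periods of $H$ (i.e.\ $k,h-k\in\cO(H)$); it then invokes Schbath's characterization of the period set (the paper's (3.16), citing Theorem 6 and Remark 7 of [Sc]), which says all periods of $H$ not exceeding $h-\pi(H)$ are multiples of $\pi(H)$, to write $k=a\,\pi(H)$ and $h-k=b\,\pi(H)$, whence $h=(a+b)\pi(H)$, contradicting the hypothesis. You instead apply Fine--Wilf directly, twice: once to the length-$n$ word to conclude $s\mid h$ (a stronger intermediate fact that the paper never derives for the long word), and once to $H$ to get $r\mid s$, giving $r\mid s\mid h$. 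The two arguments rest on the same periodicity machinery --- (3.16) is itself a Fine--Wilf consequence --- but your decomposition ($r\mid s\mid h$ versus the paper's $h=k+(h-k)$ with both summands multiples of $\pi(H)$) is genuinely different, and your version is self-contained modulo the classical lemma, avoiding the citation to [Sc]. The real content in your write-up is the verification of the two length hypotheses, and you do it correctly: $s+h-\gcd(s,h)\le 2h\le n$ for the long word, and for the short word $r+s-\gcd(r,s)\le 2s\le h$, where $2s\le h$ is legitimately extracted from the first application via $s\mid h$, $s<h$. Your endgame also correctly disposes of both horns of the hypothesis: $\pi(H)=h$ is incompatible with $r\le s<h$, and $\pi(H)\nmid h$ is incompatible with $r\mid h$.
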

\begin{proof} For each $B\in\cC_m$ set
\[
\cO(B)=\{k\leq m,k\geq 1:\, B\cap T^{-k}B\ne\emptyset\},
\]
so that $\pi(B)=\min(\cO(B))$. It follows from Theorem 6 and Remark 7
from \cite{Sc} that we can also write
\begin{eqnarray}\label{3.16}
&\cO(B)=\{\pi(B),2\pi(B),...,[\frac m{\pi(B)}]\pi(B)\}\\
&\cup\{ k\in\{ m-\pi(B)+1,...,m\}:\, B\cap T^{-k}B\ne\emptyset\}.\nonumber
\end{eqnarray}
Let $n\geq 2h$ and set $\pi(H^{n/h})=k$. Clearly, $h\in\cO (H^{n/h})$ 
and so $h\geq k$. We suppose that $h>k$ and arrive at a contradiction. 
By the definition of $\pi$ it follows that $H^{n/h}\cap T^{-k}(H^{n/h})\ne
\emptyset$. Therefore, $[a_{k},...,a_{h-1}]=[a_{0},...,a_{h-1-k}]$ and 
$[a_{0},...,a_{k-1}]=[a_{h-k},...,a_{h-1}]$, and so
$k,h-k\in\mathcal{O}(H)$. Thus, $k\geq\pi(H)$ and if $\pi(H)=h$ we would
have $k\geq h$ which contradicts our assumption. Hence, it remains to consider
 the case when $h$ is not
divisible by $\pi(H)$. Since $k,h-k\in\mathcal{O}(H)$ then
$\pi(H)\leq k,h-k$, and so $k,h-k\leq h-\pi(H)$. This together with
 (\ref{3.16}) yields that there exist integers $a$ and $b$ such that
 $1\leq a,b\leq[\frac{h}{\pi(H)}]$, $k=a\cdot\pi(H)$ and $h-k=b\cdot\pi(H)$.
Therefore $h=(h-k)+k=(a+b)\pi(H)$ which contradicts our assumption that $h$ 
is not divisible by $\pi(H)$. Hence, $h=k$ and the proof is complete.
\end{proof}

The following result will be used in the proof of Theorem \ref{thm2.3}.
\begin{lemma}\label{lem3.6} Let $q_i,\, i=1,...,\ell$ be as in (\ref{2.10})
and $n\geq r(d_\ell+1)$.
For any positive integers $m$ and $n$ satisfying $m>2d_\ell n$ set
\[
\cN=\cN_{m,n}=\{l=1,2,..., n:\,\{X_m=1,X_{m+l}=1\}\ne\emptyset\}.
\]
Then $\min\cN=\ka$ where $\ka$ is defined by (\ref{2.11}) 
 with $n$ being the length of a cylinder $A$ there and $r=\pi(A)$.
\end{lemma}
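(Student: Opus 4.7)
I would split the argument into two halves: (i) showing $\ka \in \cN$, hence $\min \cN \leq \ka$; and (ii) showing $\{X_m = X_{m+l} = 1\} = \emptyset$ for every $1 \leq l < \ka$. As a preliminary sanity check, $\ka$, being the least common multiple of divisors of $r$, itself divides $r$, so $\ka \leq r \leq n/(d_\ell+1) \leq n$ and the claim is well-posed.

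The key structural tool would be Lemma~\ref{lem3.5} together with formula~(\ref{3.16}): since $\pi(A)=r$, the set $\cO(A)=\{k\in[1,n]:\,A\cap T^{-k}A\neq\emptyset\}$ is the union of $\{r,2r,\ldots,[n/r]r\}$ and a ``tail'' subset of $[n-r+1,n]$. Two copies of $A$ placed at positions $p<p'$ on a common sequence are compatible iff $p'-p\geq n$ or $p'-p\in\cO(A)$. The event $\{X_m=X_{m+l}=1\}$ prescribes copies of $A$ at the $2\ell$ positions $P_l=\{d_im,\,d_i(m+l):1\leq i\leq\ell\}$. Using $m>2d_\ell n$ and the fact that $d_il\leq d_\ell r<n$ whenever $l\leq\ka\leq r$, I would check pairwise that every two positions in $P_l$ are separated by more than $n$ except the $\ell$ ``diagonal'' pairs $(d_im,d_i(m+l))$, whose displacement is $d_il$.

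\textbf{Part (i).} For $l=\ka$ and each $i$, $\ka$ is by definition a multiple of $r/\gcd(r,d_i)$, so $d_i\ka$ is a positive multiple of $r$; moreover $d_i\ka\leq d_\ell r\leq [n/r]r$, since $n\geq r(d_\ell+1)$ gives $[n/r]\geq d_\ell+1$. Hence $d_i\ka\in\cO(A)$ and all $\ell$ diagonal constraints are compatible, while off-diagonal pairs do not overlap at all. I can then prescribe an $\om\in\Om$ realising every constraint and extend it arbitrarily elsewhere, witnessing $\ka\in\cN$.

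\textbf{Part (ii) and main obstacle.} Assume for contradiction that $1\leq l<\ka$ and some $\om\in\{X_m=X_{m+l}=1\}$. The diagonal overlap at index $i$ forces $d_il\in\cO(A)$, and since $d_il\leq d_\ell l<d_\ell\ka\leq d_\ell r\leq n-r$ the value $d_il$ lies strictly below the tail interval $[n-r+1,n]$, so it must be a positive multiple of $r$. Writing $g_i=\gcd(r,d_i)$ and using $\gcd(r/g_i,d_i/g_i)=1$ yields $r/g_i\mid l$ for every $i$, whence $\ka=\mathrm{lcm}\{r/g_i\}$ divides $l$, contradicting $1\leq l<\ka$. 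The only real delicacy is to use the hypothesis $n\geq r(d_\ell+1)$ sharply: this is precisely what places $d_i\ka$ and $d_il$ inside the arithmetic-progression part $\{r,\ldots,[n/r]r\}$ of $\cO(A)$ rather than in its tail, and, combined with $m>2d_\ell n$, what confines all nontrivial overlaps between positions of $P_l$ to the diagonal pairs.
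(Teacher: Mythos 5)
Your proof is correct and follows essentially the same route as the paper's: both reduce membership of $l$ in $\cN$ (for $l\leq (n-r)/d_\ell$) via (\ref{3.16}) to the divisibility condition $r\mid d_il$ for all $i$, verify $\ka\in\cN$ using $\ka\leq r\leq (n-r)/d_\ell$, and conclude by the elementary fact that $r\mid d_il$ forces $r/\gcd(r,d_i)\mid l$, hence $\ka\mid l$. The only difference is one of presentation: you spell out the off-diagonal separation argument (via $m>2d_\ell n$) and the explicit witness construction that the paper compresses into ``It follows from (\ref{3.16})''.
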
 
\begin{proof} It follows from (\ref{3.16}) that if $1\leq l\leq
\frac{n-r}{d_{\ell}}$ then $l\in\mathcal{N}$ if and only if $r$ divides
$d_il$ for each $i=1,.....,\ell$. By the definition $r$ divides $\ka d_i$
and by the assumption of the lemma $\ka\leq r\leq\frac{n-r}{d_{\ell}}$. Hence,
$\ka\in\mathcal{N}$, and so $\ka\geq y=\min\cN$. Now, let $l\in\mathcal{N}$
satisfies $l\leq\frac{n-r}{d_{\ell}}$.Then $r$ divides $d_il$
for each $1\leq i\leq\ell$, and so $\frac{r}{gcd\{r,d_{i}\}}$ divides $l$.
Thus, $\ka$ divides $l$, and so $\ka\leq l$. It follows that $\ka\leq y$, 
completing the proof of the lemma.
\end{proof}

In Sections \ref{sec5} and \ref{sec6} we will need the following results. 
As in Corollary \ref{cor2.2} for each $\om=(\om_0,\om_1,...)\in\Om$ set 
$A_n^\om=[\om_0,...,\om_{n-1}]$, $r^\om_n=\pi(A^\om_n)$ and   
$R^\om_n=[\om_0,...,\om_{r^\om_n-1}]$. Next, define $\ka_n^\om$ and
$\rho_n^\om$ by (\ref{2.11}) with $r=r_n^\om$ and $A=A_n^\om$.

\begin{lemma}\label{lem3.7} Given $\om\in\Omega$ the limit $r^{\om}=
\underset{n\rightarrow\infty}{\lim}r_{n}^{\om}$ exist. Furthermore, if $\om$
is a periodic point with period $d\in\mathbb{N}^{+}$ (i.e. the whole path 
 $\{T^{k}\om\::\:  k\geq 0\}$ of $\om$ consists of $d$ points) 
 then $r^{\om}=d$, otherwise $r^{\om}=\infty$. Assume that $\bbP$ is
 $\psi$-mixing. If $\om\in\Om_\bbP$ is not a periodic point then 
 \begin{equation}\label{3.17}
 \lim_{n\to\infty}\rho_n^\om=0.
 \end{equation}
 Furthermore, if independency conditions of Theorem \ref{thm2.7} are satisfied
then the limit $\rho^{\om}=\underset{n\rightarrow
\infty}{\lim}\rho_{n}^{\om}$ always exists.
Moreover, in this case if $\om$ is a periodic point with period 
$d\in\mathbb{N}^{+}$ then $\rho^{\om}=\big(\mathbb{P}([\om_{0},...,\om_{d-1}])
\big)^{k_0}$ with $k_0$ defined in Theorem \ref{thm2.7}(ii), otherwise 
$\rho^{\om}=0$.
\end{lemma}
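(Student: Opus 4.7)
The plan is to handle in turn (a) existence and identification of $r^\om$, (b) the $\psi$-mixing estimate giving $\rho_n^\om \to 0$ for non-periodic $\om$, and (c) the explicit i.i.d.\ computation.

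For (a), I would first observe that $r_n^\om = \pi(A_n^\om)$ is non-decreasing in $n$: since $A_{n+1}^\om \subset A_n^\om$, every $k$ forbidden for $A_n^\om$ stays forbidden for $A_{n+1}^\om$. Hence $r^\om := \lim_n r_n^\om$ exists in $\bbN \cup \{\infty\}$. If $\om$ has minimal period $d$, then $\om$ itself witnesses $r_n^\om \leq d$ for $n \geq d$, while $r^\om = k < d$ would force $\om_j = \om_{j+k}$ for all $j$ and contradict minimality; hence $r^\om = d$. The same contradiction argument gives $r^\om = \infty$ for non-periodic $\om$.

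For (b), write $r = r_n^\om$, $R = R_n^\om$, $\ka = \ka_n^\om$. The identity $\pi(A_n^\om) = r$ forces $\om_j = \om_{j \bmod r}$ for $j < n$, so $A_n^\om = R^{n/r}$. Consequently for any $m > n$,
\[
R^{m/r} = A_n^\om \cap T^{-n}B_{m,n},
\]
where $B_{m,n}$ is the $(m-n)$-cylinder continuing the periodic pattern. The two factors lie in the disjoint coordinate algebras $\cF_{0,n-1}$ and $\cF_{n,m-1}$, so the $\psi_0$-mixing bound and Lemma \ref{lem3.1} give
\[
\bbP\{R^{m/r}\mid A_n^\om\} \leq (1+\psi_0)\bbP(B_{m,n}) \leq (1+\psi_0)e^{-\Gam(m-n)}.
\]
Setting $m = n+d_i\ka$ and noting that $r/\gcd(r, d_\ell)$ divides $\ka$ so $\ka_n^\om \geq r_n^\om/d_\ell \to \infty$ by part (a), each of the $\ell$ factors of $\rho_n^\om$ tends to $0$, proving (\ref{3.17}).

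For (c), the same decomposition plus exact independence gives
\[
\bbP\{R^{m/r}\mid A_n^\om\} = \prod_{j=n}^{m-1}\bbP([\om_{j \bmod r}]).
\]
For non-periodic $\om$ this is bounded by $(\sup_{a\in\cA}\bbP([a]))^{m-n} \to 0$, giving $\rho^\om = 0$. For periodic $\om$ of minimal period $d$, part (a) gives $r_n^\om = d$ and $\ka_n^\om = \ka^\om$ for all large $n$. Since $d \mid d_i\ka^\om$ by the definition of $\ka^\om$, the block $[n, n+d_i\ka^\om)$ hits each residue class modulo $d$ exactly $d_i\ka^\om/d$ times, so the conditional probability equals $\bbP(A_d^\om)^{d_i\ka^\om/d}$; multiplying over $i$ gives $\rho_n^\om = \bbP(A_d^\om)^{k_0}$ with $k_0 = (\ka^\om/d)\sum_i d_i$.

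The step I anticipate as the main obstacle is the identification $A_n^\om = R^{n/r}$ and the ensuing decomposition $R^{m/r} = A_n^\om \cap T^{-n}B_{m,n}$ into events on disjoint coordinate blocks: once this is secured, the $\psi$-mixing estimate is routine and the i.i.d.\ case reduces to a count in $\bbZ/d\bbZ$.
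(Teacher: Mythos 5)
Your proposal is correct, and for most of the lemma it follows the paper's own route: existence of $r^\om$ via monotonicity of $r_n^\om$ (the paper phrases this as $r_{n+1}^\om\in\cO(A_n^\om)\cup\{n+1\}$), the non-periodic case by the same eventually-constant-period contradiction, and the bound (\ref{3.17}) by splitting the long cylinder at coordinate $n$ and applying the $\psi_0$ bound together with Lemma \ref{lem3.1} — the paper truncates the tail to a single period of length $r$, getting $(1+\psi_0)e^{-\Gam r_n^\om}$, while you keep the full tail of length $d_i\ka\geq r_n^\om$, an immaterial difference since both rest on $r_n^\om\to\infty$. The identity $A_n^\om=R^{n/r}$ that you flagged as the main obstacle is exactly the overlap characterization the paper records right after defining $R^{n/r}$, so that step is secure. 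Your i.i.d.\ computation is also sound and in fact more explicit than the paper's, which disposes of this part in one sentence here and defers the identity $\rho=(\bbP(R))^{k_0}$ to (\ref{6.2}) in the proof of Theorem \ref{thm2.6}. The one genuine divergence is the periodic-case identification $r^\om=d$: the paper invokes Lemma \ref{lem3.5}, noting that $\pi(D)=d$ or $\pi(D)$ does not divide $d$ for $D=A_d^\om$, to conclude the exact equality $\pi(D^{n/d})=d$ for all $n\geq 2d$; you instead combine the upper bound $r_n^\om\leq d$ (from $T^d\om=\om$) with monotonicity and rule out a limit $k<d$ because $r_n^\om$ eventually equal to $k$ would force $\om_{j+k}=\om_j$ for all $j$, contradicting minimality of $d$. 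Your variant is more elementary and treats the periodic and non-periodic cases by one uniform argument, at the cost of only giving eventual (rather than explicit, $n\geq 2d$) equality $r_n^\om=d$; since integer-valued monotone convergence yields eventual equality anyway, this weaker form is all that the paper uses later (e.g.\ in the proof of Theorem \ref{thm2.7}(ii)), so nothing is lost.
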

\begin{proof} Assume that $\om$ is periodic with period $d\in\mathbb{N}^{+}$
and set $D=[\om_{0},...,\om_{d-1}]$. By the definition of a period of a point
it follows that $\pi(D)=d$ or $\pi(D)$ does not divide $d$. This together 
with Lemma \ref{lem3.5} yields that $\pi(D^{n/d})=d$
for all $n\geq2d$. Moreover, $T^{d}(\om)=\om$ which implies that
$D^{n/d}=A_{n}^{\om}$ for each $n\geq d$. From
this it follows that
\[
\underset{n\rightarrow\infty}{\lim}r_{n}^{\om}=\underset{n\rightarrow\infty}
{\lim}\pi(A_{n}^{\om})=\underset{n\rightarrow\infty}{\lim}\pi(D^{n/d})=d
\]
Now assume that $\om$ is not periodic. Given $n\geq1$, from the definition
of $\mathcal{O}$ in Lemma \ref{3.5} it follows that $r_{n+1}^{\om}\in
\mathcal{O}(A_{n}^{\om})\cup\{n+1\}$,
which implies that $r_{n}^{\om}\leq r_{n+1}^{\om}$. This holds true for all
$n\geq1$, and so $r^{\om}=\underset{n\rightarrow\infty}{\lim}r_{n}^{\om}$
exists and it is in the set $\mathbb{N}^{+}\cup\{\infty\}$. Assume by
contradiction that $r^{\om}<\infty$, then there exists an integer $M\geq1$
such that $r_{n}^{\om}=r^{\om}$ for all $n\geq M$. From this it follows that 
$A_{n}^{\om}=[\om_{0},...,\om_{r^{\om}-1}]^{n/r^{\om}}$ for all such $n$ 
which implies that $\om=[\om_{0},...,\om_{r^{\om}-1}]^{\infty}$ , and so
$\om$ is a periodic point which is a contradiction to our assumption.
From this it follows that $r^{\om}=\infty$ and the assertion concerning
$r^\om$ is proved. 

Next, let $\om\in\Om_\bbP$ be not a periodic point. Let $r=r_n^\om$, 
$\ka=\ka_n^\om$,
$A=A_n^\om$ and $R=R_n^\om$. Since $r$ divides $d_i\ka$ for each $i=1,...,\ell$
we see by (\ref{2.1}), (\ref{2.2}) and (\ref{3.1}) that
\begin{equation}\label{3.18}
\bbP(R^{(n+d_i\ka)/r})\leq (1+\psi_0)\bbP(A)\bbP(T^{n_r}R^{(n_r+r)/r})\leq 
(1+\psi_0)e^{-\Gam r^\om_n}\bbP(A)
\end{equation}
where $n_r=n$ (mod $r)=n-[n/r]r$.
Now (\ref{3.17}) follows since by the above $r_n^\om\to\infty$ as $n\to\infty$
when $\om$ is not periodic. 
Under the conditions of Theorem \ref{thm2.7} the remaining assertion
concerning $\rho^\om$ follows from the properties of $r^\om_n$ derived 
above together with the independence assumption.
\end{proof}

The assertion (\ref{2.12}) of Theorem \ref{thm2.3} we obtain as a separate 
lemma.
\begin{lemma}\label{lem3.8} Assume that $\bbP$ is $\psi$-mixing then
 (\ref{2.12}) holds true.
\end{lemma}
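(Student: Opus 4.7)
Plan:

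I would prove (\ref{2.12}) by combining the estimate (\ref{3.18}) from the proof of Lemma~\ref{lem3.7} with a case split on the period $r=\pi(A)$. Applying (\ref{3.18}) to each of the $\ell$ factors in the definition of $\rho_A$ immediately yields
\[
\rho_A = \prod_{i=1}^\ell \mathbb{P}\bigl(R^{(n+d_i\kappa)/r}\,\big|\,A\bigr)
\;\le\;\bigl((1+\psi_0)e^{-\Gamma r}\bigr)^\ell.
\]
Choose $r_0\in\mathbb{N}$ to be the smallest integer with $(1+\psi_0)^\ell e^{-\Gamma r_0\ell}\le 1/2$; this exists since $\Gamma>0$. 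Then $\rho_A\le 1/2$ whenever $\pi(A)\ge r_0$, and in fact $\rho_A\to 0$ as $r\to\infty$.

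For the remaining $r<r_0$ I would refine (\ref{3.18}) by exploiting a gap inside the extension. When $n_r=n\bmod r>0$, the cylinder $R^{(n+d_i\kappa)/r}$ contains a freshly aligned sub-cylinder $R^{(n_r+r)/r}$ starting at position $n+r-n_r$, separated from $A$ by a gap of $r-n_r\ge 1$, so Lemma~\ref{lem3.2} improves each factor to $(1+\psi_{r-n_r-1})e^{-\Gamma(n_r+r)}$. Combining this with the pointwise fact that $\rho_A<1$ always holds under $\psi$-mixing (since $\sup_{a\in\cA}\mathbb{P}([a])<1$ prevents the periodic continuation from occurring with full conditional probability), and using that for each fixed $r<r_0$ the sup over the relevant $A$'s is controlled because $\psi_m\to 0$, one extracts a uniform bound $\rho_A\le c_r<1$ on the set of cylinders with $\pi(A)=r$.

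Taking $c=\max\{1/2,\,c_1,\ldots,c_{r_0-1}\}<1$ then gives $\rho_A\le c$ uniformly, proving (\ref{2.12}).

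The main obstacle is the boundary case $n_r=0$ with small $r$ (most acutely $r=1$), where the gap in the refined argument vanishes and one must fall back on the unrefined (\ref{3.18}) bound $(1+\psi_0)^\ell e^{-\Gamma r\ell}$, which may not itself be $<1$. In this regime I would argue that for each fixed $r$ and each cylinder $R\in\cC_r$, the ratios $\mathbb{P}(R^{(n+d_i\kappa)/r})/\mathbb{P}(R^{n/r})$ converge, as $n\to\infty$ along multiples of $r$, to limits strictly less than $1$ (obtained via $\psi$-mixing by decoupling the initial block from the tail), and that the supremum over the finitely many small-$n$ values and—for countable alphabets—over $R$'s of length $r$ with non-negligible $\mathbb{P}(R)$ is therefore also strictly below $1$, with $R$'s of small probability contributing only small $\rho_A$ by a direct $\psi$-mixing estimate.
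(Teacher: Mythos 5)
Your large-$r$ reduction is sound and indeed matches a by-product of the paper's estimates: (\ref{3.18}) (equivalently (\ref{3.23}) with $k=1$) bounds each factor of $\rho_A$ by $(1+\psi_0)e^{-\Gamma r}$, so $\rho_A\le\frac12$ once $\pi(A)\ge r_0$. But the bounded-$r$ regime is the actual content of the lemma, and there your argument has a genuine gap. The key claim you rely on --- that for fixed $R\in\cC_r$ the ratios $\bbP(R^{(n+d_i\ka)/r})/\bbP(R^{n/r})$ converge as $n\to\infty$, ``obtained via $\psi$-mixing by decoupling the initial block from the tail'' --- is false: the periodic extension sits flush against $A$ with zero gap, so the only mixing information available is the $\psi_0$ bound, which gives two-sided $(1+\psi_0)$-type comparisons but never convergence, and $\psi_m\to 0$ cannot be brought to bear because no gap can be inserted into the event. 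The paper's own Section \ref{sec7} example refutes the claim concretely: $\bbP_0$ there is $\psi$-mixing with $\psi_l=0$ for $l\ge 1$, yet for $\om=1^\infty$ (so $r=1$, $\ell=1$, $d_1=1$) the quantity $\rho_{A^\om_n}=\bbP_0(1^{n+1}\mid 1^n)$ oscillates forever between $2p_0p_1$ and $\frac12$. For the same reason your fallback claims do not close the gap: the ``pointwise fact that $\rho_A<1$'' does not follow from $\sup_{a\in\cA}\bbP([a])<1$ (that reasoning needs independence; under mere $\psi$-mixing the conditional probability carries $(1+\psi_0)$ corrections, and proving pointwise strictness already requires the iteration described below), and even granted it, pointwise strictness gives no uniform bound over the infinitely many cylinders of growing length $n$ that share a fixed small $r$; likewise $\psi_{r-n_r-1}$ is a fixed finite number when $r<r_0$, so your ``refined'' estimate improves nothing.

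What is missing is the paper's bootstrapping device, which avoids any convergence statement. Fix cylinders $A_n\in\cC_n$, write $\bbP\big(R_n^{(n+d_1\ka_n)/r_n}\mid A_n\big)=1-\del_n$ and note $\rho_{A_n}\le 1-\del_n$ since the remaining factors are at most $1$. Using the disjoint decomposition (\ref{3.21}) and $T$-invariance, an induction on $k$ yields $\bbP\big(R_n^{(n+kd_1\ka_n)/r_n}\big)\ge(1-2^{k-1}\del_n)\bbP(A_n)$ for every $k$: each successive periodic extension loses at most a controlled multiple of the one-step defect $\del_n$. On the other hand, $\psi_0$-mixing together with Lemma \ref{lem3.1} give $\bbP\big(R_n^{(n+kd_1\ka_n)/r_n}\big)\le(1+\psi_0)e^{-\Gam kr_n}\bbP(A_n)\le(1+\psi_0)e^{-\Gam k}\bbP(A_n)$, uniformly in $n$ because $r_n\ge 1$. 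Choosing $k$ once and for all with $(1+\psi_0)e^{-\Gam k}<\frac12$, any subsequence along which $\del_{l_m}\to 0$ would force $\bbP\big(R_n^{(n+kd_1\ka_n)/r_n}\big)$ to be simultaneously $>\frac12\bbP(A_n)$ and $<\frac12\bbP(A_n)$ --- a contradiction, so $\inf_n\del_n>0$ and (\ref{2.12}) follows. The idea you lacked is exactly this: iterate the adjacent-block extension $k$ times so that the exponential decay $e^{-\Gam k}$ eventually beats the fixed constant $(1+\psi_0)$, with the per-step loss controlled linearly by $\del_n$; no limit of ratios is needed, and, as the example shows, none exists in general.
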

\begin{proof} First, clearly $\rho_A\leq 1$ for any $A\in\cC_n$ and 
each $n\geq 1$. Next, let
$A_{n}=[a_0^{(n)},...,a_{n-1}^{(n)}]\in\cC_{n}$ and write
\begin{equation}\label{3.19}
\bbP\big( R_{n}^{(n+d_1\ka_{n})/r_{n}}|A_{n}\big)=\frac {\bbP\big( 
R_{n}^{(n+d_1\ka_{n})/r_{n}}
\big)}{\bbP(A_n)}=1-\del_n,\,\,\del_n\geq 0 
\end{equation}
where $R_n=[a_0^{(n)},...,a_{r_n-1}^{(n)}]$,
$r_n=\pi(A_n)$ and $\ka_n$ is given by (\ref{2.11}) with $r=r_n$.
Then we show by induction that for any $k\in \bbN$,
\begin{equation}\label{3.20}
\bbP\big( R_n^{(n+kd_1\ka_n)/r_n}\big)\geq (1-2^{k-1}\del_n)\bbP(A_n). 
\end{equation}
Indeed, (\ref{3.20}) is satisfied for $k=1$ in view of (\ref{3.19}).
Suppose that (\ref{3.20}) holds true for $k=m$ and prove it for 
$k=m+1$. Set $n_r=n-r_n[n/r_n]$ and observe that for any $l\geq 1$,
\begin{eqnarray}\label{3.21}
& \big(R_n^{(n+(l-1)d_1\ka_n)/r_n}\cap 
T^{-(n+(l-1)d_1\ka_n)}(\Om\setminus T^{n_r}R_n^{(n_r+d_1\ka_n)/r_n})\big)\\
&\cup R_n^{(n+ld_1\ka_n)/r_n}=R_n^{(n+(l-1)d_1\ka_n)/r_n}\subset A_n\nonumber
\end{eqnarray}
and the union above is disjoint. The induction hypothesis together with
(\ref{3.21}) considered with $l=m$ yields
\begin{eqnarray}\label{3.22}
&\bbP\big(R_n^{(n+md_1\ka_n)/r_n}\cap T^{-(n+md_1\ka_n)}
(\Om\setminus T^{n_r}R_n^{(n_r+d_1\ka_n)/r_n})\big)\\
&\leq \bbP\big(T^{-d_1\ka}(R_n^{(n+(m-1)d_1\ka_n)/r_n}\cap T^{-(n+(m-1)d_1\ka_n)}
(\Om\setminus T^{n_r}R_n^{(n_r+d_1\ka_n)/r_n}))\big)\nonumber\\
&=\bbP\big(R_n^{(n+(m-1)d_1\ka_n)/r_n}\cap T^{-(n+(m-1)d_1\ka_n)}
(\Om\setminus T^{n_r}R_n^{(n_r+d_1\ka_n)/r_n})\big)\nonumber\\
&\leq\del_n2^{m-1}\bbP(A_n).
\nonumber \end{eqnarray}
Employing (\ref{3.21}) with $l=m+1$ we obtain from (\ref{3.22}) and
(\ref{3.20}) for $k=m$ that
\[
\bbP\big(R_n^{(n+(m+1)d_1\ka_n)/r_n}\big)\geq \bbP\big(R_n^{(n+md_1\ka_n)/r_n}
\big)-\del_n2^{m-1}\bbP(A_n)\geq (1-2^m\del_n)\bbP(A_n),
\]
and so (\ref{3.20}) holds true with $k=m+1$ completing the induction.

Now observe that by (\ref{2.1}), (\ref{2.2}) and (\ref{3.1}),
\begin{equation}\label{3.23}
\bbP\big( R_n^{(n+kd_1\ka_n)/r_n}\big)\leq (1+\psi_0)e^{-\Gam kr_n}
\bbP(A_n). 
\end{equation}
Since always $r_n\geq 1$ we can choose $k$ so large that 
$(1+\psi_0)e^{-\Gam kr_n}<\frac 12$ for all $n$ making the  right hand 
side of (\ref{3.23}) less than $\frac 12\bbP(A_n)$ for all $n$. Now
suppose by contradiction that there exists a subsequence $l_m\to\infty$
as $m\to\infty$ such that $\del_{l_m}\to 0$ as $m\to\infty$. Then, we
can choose $n=l_m$ in (\ref{3.20}) with $m$ so large that the right hand
side of (\ref{3.20}) will be bigger than $\frac 12\bbP(A_n)$ which leads
to a contradiction proving the lemma.
\end{proof}

\section{Poisson approximation}\label{sec4}\setcounter{equation}{0}

\subsection{Proof of Theorem \ref{2.1}} 
If $t^{-1}\gam(n)(\mathbb{P}(A))^{\ell}\geq\frac{1}{4}$
then the theorem clearly holds true, and so we can assume that 
$\gam(n)<\frac{t(\mathbb{P}(A))^{-\ell}}{4}$. Set $U=S_N^A$.
If $t(\mathbb{P}(A))^{-\ell}<4$ then for each $L\subset\mathbb{N}$,
\begin{eqnarray}\label{4.1}
&|\mathbb{P}\{U\in L\}-P_{t}(L)|\leq\mathbb{P}\{U\ne0\}+|\mathbb{P}\{U=0\}-
P_{t}\{0\}|+P_{t}(\mathbb{N}^{+})\\
&\leq2\mathbb{P}\{U\ne0\}+2P_{t}(\mathbb{N}^{+})\leq2N\mathbb{P}(A)
+2(1-e^{-t})\leq8\mathbb{P}(A)+2t\leq16\mathbb{P}(A).\nonumber
\end{eqnarray}
Again the theorem holds true, so we can assume that 
$t(\mathbb{P}(A))^{-\ell}\geq 4$
and then $\gam(n)<\frac{t(\mathbb{P}(A))^{-\ell}}{4}\leq N$.

Set $W=\underset{\alpha=\gam(n)}{\overset{N}{\sum}}X_{\alpha}$,
where $X_\al=X_\al^A$ was defined in (\ref{2.6}),
and $\lambda=EW$. For any $L\subset\mathbb{N}$,
\begin{eqnarray}\label{4.2}
|\mathbb{P}\{U\in L\}-P_{t}(L)|\leq|\mathbb{P}\{U\in L\}-\mathbb{P}\{W\in L\}|\\
+|\mathbb{P}\{W\in L\}-P_{\lambda}(L)|+|P_{\lambda}(L)-P_{t}(L)|=\delta_{1}
+\delta_{2}+\delta_{3}\nonumber
\end{eqnarray}
where $\del_1,\del_2$ and $\del_3$ denote the first, the second and the third
terms in the right hand side of (\ref{4.2}), respectively.
We estimate $\delta_{1}$ by
\begin{equation}\label{4.3}
\delta_{1}\leq2\mathbb{P}\{U-W>0\}\leq2\underset{\alpha=1}
{\overset{\gam(n)}{\sum}}\mathbb{P}\{X_{\alpha}=1\}\leq2\gam(n)
\mathbb{P}(A).
\end{equation}

In order to estimate $\delta_{2}$ we use Theorem 1 from \cite{AGG}. Note that
from the assumption $\psi_{n}\leq (3/2)^{1/(\ell+1)}-1$ and from Lemma 
\ref{lem3.2} whenever $\gam(n)\leq\alpha\leq N$ it follows that 
\[
\mathbb{P}\{X_{\alpha}=1\}\geq(2-(1+\psi_n)^\ell)(\mathbb{P}(A))^{\ell}>
\frac{1}{2}(\mathbb{P}(A))^{\ell}>0.
\]
Hence, the conditions of Theorem 1 from \cite{AGG} are satisfied with the 
collection $\{X_{\gam(n)},...,X_{N}\}$. For each $\al\in\bbN_+$
satisfying $\gam(n)\leq\alpha\leq N$ set
\[
B_{\alpha}=\{\be\geq\gam(n),\,\beta\leq N:\, \text{ such that }|q_{i}
(\alpha)-q_{j}(\beta)|<2n\,\,\mbox{for some}\,\, i,j=1,2,...,\ell\}.
\]
Then by Theorem 1 from \cite{AGG},
\begin{equation}\label{4.4}
\delta_{2}\leq b_{1}+b_{2}+b_{3}
\end{equation}
where 
\[
b_{1}=\underset{\alpha=\gam(n)}{\overset{N}{\sum}}(\underset{\beta\in 
B_{\alpha}}{\sum}\mathbb{P}\{X_{\alpha}=1\}\mathbb{P}\{X_{\beta}=1\}),
\]
\[
b_{2}=\underset{\alpha=\gam(n)}{\overset{N}{\sum}}(\underset{\alpha\ne
\beta\in B_{\alpha}}{\sum}\mathbb{P}\{X_{\alpha}=1,X_{\beta}=1\})\,\,
\mbox{and}\,\,
b_{3}=\underset{\alpha=\gam(n)}{\overset{N}{\sum}}E\big\vert E(X_{\alpha}
-EX_\alpha\mid\cB_\al)\big\vert
\]
with $\cB_\al=\sigma\{X_{\beta}\::\:\beta\notin B_{\alpha}\}$.

From Lemma \ref{lem3.2} and the fact that 
$|B_{\alpha}|\leq4\ell^{2}n$ for each $\alpha$ it follows that 
\begin{equation}\label{4.5}
b_{1}\leq N4\ell^{2}n\big((1+\psi_n)\mathbb{P}(A)\big)^{2\ell}
\leq 8\ell^{2}nt(\mathbb{P}(A))^{\ell}.
\end{equation}

Next, we estimate $b_{2}$. Let $\gam(n)\leq\alpha\leq N$ and $\alpha\ne\beta
\in B_{\alpha}$.
Assume without loss of generality that $\alpha<\beta$, so $q_{1}(\alpha)
<q_{1}(\beta)$.
If $q_{1}(\beta)-q_{1}(\alpha)<\pi(A)$ then by the definition of
$\pi(A)$ it follows that $\mathbb{P}\{X_{\alpha}=1,X_{\beta}=1\}=0$,
so we can assume that $q_{1}(\beta)-q_{1}(\alpha)\geq\pi(A)$. Now
by (\ref{2.1}), (\ref{2.2}) and Lemmas \ref{lem3.1} and \ref{3.2},
\begin{eqnarray*}
&\mathbb{P}\{X_{\alpha}=1,X_{\beta}=1\}\leq\mathbb{P}(T^{-q_{1}(\alpha)}
([a_{0},...,a_{\pi(A)-1}])\cap\{X_{\beta}=1\})\\
&\leq(1+\psi_{0})\mathbb{P}(A(\pi))\mathbb{P}\{X_{\be}=1\}
\leq (1+\psi_0)\mathbb{P}(A(\pi))(1+\psi_n)^\ell(\bbP(A))^\ell.
\end{eqnarray*}
Since by our assumption $(1+\psi_n)^\ell\leq 3/2$ we obtain that
\begin{equation}\label{4.6}
b_{2}\leq 6(1+\psi_0)N\ell^{2}n\mathbb{P}(A(\pi))(\mathbb{P}(A))^{\ell}
\leq 6(1+\psi_{0})\ell^2tn\mathbb{P}(A(\pi)).
\end{equation}

In order to estimate $b_{3}$ we use Lemma \ref{3.3}. Fix an integer
$\al$ such that $\gam(n)\leq\al\leq N$ and set 
\begin{eqnarray*}
&Q=Q_\al=\{ q_i(\al)+m:\, i=1,...,\ell;\, m=0,1,...,n-1\}\,\,\mbox{and}\\
&\tilde Q=\tilde Q_\al=\{ q_j(\be)+m:\, j=1,...,\ell;\,\,\be\not\in B_\al,\, 
m=0,1,...,n-1\}.
\end{eqnarray*}
Then the conditions of Lemma \ref{lem3.3} are satisfied with $d=n$ and
such $Q$ and $\tilde Q$. Taking into account that $\cB_\al\subset\cF_{\tilde Q}$ we derive
easily from Lemmas \ref{lem3.2} and \ref{lem3.3} that for $p=EX_{\alpha}$, 
\begin{eqnarray*}
&E\bigr|E(X_{\alpha}-p\mid\mathcal{B_\al})\bigr|=E\bigr|E(E(X_{\alpha}-
p\mid\mathcal{F}_{\tilde Q})\mid\mathcal{B_\al})\bigr|\\
&\leq EE(\bigr|E(X_{\alpha}
-p\mid\mathcal{F}_{\tilde Q})\bigr|\mid\mathcal{B_\al})\\
&=E\bigr|E(X_{\alpha}-p\mid\mathcal{F}_{\tilde Q})\bigr|
\leq 2^{2\ell+4}\psi_{n}\mathbb{P}\{X_{\alpha}=1\}\leq
2^{2\ell+5}\psi_{n}(\mathbb{P}(A))^{\ell}
\end{eqnarray*}
 Hence,
\begin{equation}\label{4.7}
b_{3}\leq N2^{2\ell+5}\psi_{n}(\mathbb{P}(A))^{\ell}\leq2^{2\ell+5} 
t\psi_{n}.
\end{equation}
In order to estimate $\delta_{3}$ we use Lemma \ref{lem3.4} which yields
\[
\delta_{3}\leq\underset{l=0}{\overset{\infty}{\sum}}|P_{\lambda}\{l\}-
P_{t}\{l\}|\leq2e^{|\lambda-t|}|\lambda -t|=2\wp(|\la-t|).
\]
We also have by Lemma \ref{lem3.2} that
\begin{eqnarray*}
&|\lambda-t|\leq|E(\underset{\alpha=\gam(n)}{\overset{N}{\sum}}X_{\alpha})-
N(\mathbb{P}(A))^{\ell}|+(\mathbb{P}(A))^{\ell}\\
&\leq\underset{\alpha=\gam(n)}{\overset{N}{\sum}}|\mathbb{P}\{X_{\alpha}=1\}
-(\mathbb{P}(A))^{\ell}|
+\gam(n)(\mathbb{P}(A))^{\ell}\\
&\leq N\psi_{n}2^{\ell}(\mathbb{P}(A))^{\ell}+\gam(n)(\mathbb{P}(A))^{\ell}
\leq 2^{\ell} t\psi_{n}+\gam(n)(\mathbb{P}(A))^{\ell}.
\end{eqnarray*}
It follows that
\begin{equation}\label{4.8}
\delta_{3}\leq2\wp(2^{\ell} t\psi_{n}+\gam(n)(\mathbb{P}(A))^{\ell}).
\end{equation}
Now (\ref{2.7}) follows from (\ref{4.1})--(\ref{4.8}) while (\ref{2.8})
follows from Lemma \ref{lem3.1},
completing the proof of the theorem. \qed

\subsection{Proof of Corollary \ref{cor2.2}}
 Set $c=3\Gamma^{-1}$ and fix $M\in\mathbb{N}$
such that $M>c\ln M$ and $\psi_{n}\leq (3/2)^{1/(\ell+1)}-1$ for all $n\geq M$.
Denote by $\Omega^{*}$ the set of all $\omega\in\Omega$ for which there
exist an $M(\omega)\geq M$ such that for each $n\geq M(\omega)$,
\[
\pi(A_{n}^{\omega})>n-c\ln n\;\text{ and }\;\mathbb{P}(A_{n}^{\omega})>0.
\]
Set $U^\om_n=S_N^{A_n^\om}$. Assuming (\ref{2.8}) it follows from
Theorem \ref{thm2.1} that for each $\omega\in\Omega^{*}$ and
$n\geq M(\omega)$,
\begin{eqnarray*}
&\underset{L\subset\mathbb{N}}{\sup}|\mathbb{P}\{U_{n}^{\omega}\in L\}-
P_{t}(L)|\\
&\leq 16e^{-\Gam n}\big(\ell^2nt+\gam(n)(1+t^{-1})+t\ell^2n^4(1+\psi_0)
\big)+2\wp(2^{\ell} t\psi_{n}+\gam(n)e^{-\Gam n})
\end{eqnarray*}
which gives (\ref{2.11}) and it remains to show that $\Omega^{*}$ has
the full measure. 

For each $n\geq M$ set
\[
B_{n}=\{\omega\::\:\pi(A_{n}^{\omega})\leq n-c\ln n\}.
\]
Fix $n\geq M$ and let $d=[n-c\ln n]$. For $a_0,a_1,...,a_d$ and $r\leq d$
set $A^a_r=[a_0,a_1,...,a_{r-1}]$ and $D^a_{r,n}=\{\om=(\om_0,\om_1,
...):\,\om_k=a_{k-r[k/r]}\,\,\forall\, k=r,r+1,...n-1\}$.
Then by (\ref{2.1}), (\ref{2.2}) and Lemma \ref{lem3.1},
\begin{eqnarray*}
&\mathbb{P}(B_{n})\leq\underset{r=1}{\overset{d}{\sum}}\mathbb{P}\{\omega:
A_{n}^{\omega}\cap T^{-r}(A_{n}^{\omega})\ne\emptyset\}
=\underset{r=1}{\overset{d}{\sum}}\big(\underset{a_{0},...,a_{r-1}
\in\mathcal{A}}{\sum}\mathbb{P}(A^a_r\cap D^a_{r,n})\big) \\
&\leq(1+\psi_{0})\underset{r=1}{\overset{d}{\sum}}\underset{a_{0},...,a_{r-1}
\in\mathcal{A}}{\sum}\mathbb{P}(A^a_r)\mathbb{P}(D^a_{r,n})
\leq (1+\psi_{0})\underset{r=1}{\overset{d}{\sum}} e^{-\Gamma(n-r)}\\
&\times\underset{a_{0},...,a_{r-1}\in\mathcal{A}}{\sum}\mathbb{P}(A^a_r)
=(1+\psi_0)\underset{r=1}{\overset{d}{\sum}} e^{-\Gamma(n-r)}
\leq d(1+\psi_{0}) e^{-\Gamma(n-d)}\\
&\leq n(1+\psi_{0}) e^{-\Gamma c\ln n}
=(1+\psi_{0}) n^{1-\Gamma c}=(1+\psi_{0}) n^{-2}.
\end{eqnarray*}
It follows that
\[
\underset{n=M}{\overset{\infty}{\sum}}\mathbb{P}(B_{n})\leq(1+\psi_{0})
\underset{n=M}{\overset{\infty}{\sum}}n^{-2}<\infty.
\]
Now from the Borel-Cantelli lemma we obtain that $\mathbb{P}\{B_{n}
\text{ i.o.}\}=0$ where i.o. stands for "infinitely often".
Set $D=\Om\setminus\Om_\bbP$ which, recall, is the union of cylinders $A$
with $\bbP(A)=0$. Since $\mathbb{P}(D)=0$ then
$\mathbb{P}(\Omega\setminus\Omega^{*})=\mathbb{P}(D\cup\{B_{n}\,\,\text{i.o.}\})
=0$ completing the proof of the corollary. \qed

\section{Compound Poisson approximation}\label{sec5}\setcounter{equation}{0}

\subsection{Proof of Theorem \ref{thm2.3}} 
First, recall that assertions conserning $\rho=\rho_A$ are contained in
Lemmas \ref{lem3.7} and \ref{lem3.8}. Throughout this subsection $A\in\cC_n$
will be fixed, and so we will write $X_k$ and $S_N$ for $X^A_k$ and $S_N^A$,
respectively. Next, set $K=5d_{\ell}rn$ and $\hat X_\al=X_\al$ if $K<\al\leq
N$ and $\hat X_\al=0$ if $\al\leq K$ or $\al> N$. Now define
\[
U=\sum_{\al=1}^N\hat X_\al=\sum_{\al=K+1}^NX_\al\,\,\mbox{and}\,\, 
X_{\alpha,j}=(1-\hat X_{\alpha-\ka})(1-\hat X_{\alpha+j\ka})
\prod_{k=0}^{j-1}\hat X_{\alpha+k\ka}.
\]
Observe that for any $m$,
\begin{equation}\label{5.1}
|S_N-U|\leq \sum_{\al=1}^KX_\al\,\,\mbox{and}\,\, |U-\sum_{\al=K+1}^N
\sum_{j=1}^mjX_{\al,j}|\leq m\sum_{\al=K+1}^N\prod_{k=0}^mX_{\al+k\ka}
\end{equation}
with $\ka$ defined by (\ref{2.11}). Introduce also
\[
I_{0}=\{K+1,...,N\}\times\{1,...,n_{0}\},
\]
where, recall, $n_{0}=[\frac{n}{r}]$,
\[ 
\lambda_{\alpha,j}=EX_{\alpha,j},\,\lambda=\underset{(\alpha,j)\in I_{0}}
{\sum}\lambda_{\alpha,j}\,\,\mbox{and}\,\,s=t(1-\rho)
\]
with $\rho$ defined in (\ref{2.11}).

Next, we estimate $|\lambda-s|$. For each $i=1,2,...,\ell$ set for brevity
$c_i=n+d_i\ka$. Then
\begin{eqnarray*}
&|\lambda-s|\leq(\mathbb{P}(A))^{\ell}+\bigr|\underset{\alpha=K+1}
{\overset{N}{\sum}}\underset{j=1}{\overset{n_{0}}{\sum}}\lambda_{\alpha,j}-
N(\mathbb{P}(A))^{\ell}(1-\rho)\bigr|\\
&\leq(K+1)(\mathbb{P}(A))^{\ell}+\underset{\alpha=K+1}{\overset{N}{\sum}}
\bigr|\underset{j=1}{\overset{n_{0}}{\sum}}\lambda_{\alpha,j}-
(\mathbb{P}(A))^{\ell}(1-\rho)\bigr|\\
&=(K+1)(\mathbb{P}(A))^{\ell}+\underset{\alpha=K+1}{\overset{N}{\sum}}\bigr|
\mathbb{P}(\underset{j=1}{\overset{n_{0}}{\cup}}\{X_{\alpha,j}=1\})-
(\mathbb{P}(A))^{\ell}+\underset{i=1}{\overset{\ell}{\prod}}\mathbb{P}
(R^{c_{i}/r})\bigr|\\
&\leq 2K e^{-\Gamma n}+\underset{\alpha=K+1}{\overset{N}{\sum}}
\mathbb{P}\{\underset{k=0}{\overset{n_{0}}{\prod}}X_{\alpha+k\ka}=1\}\\
&+\underset{\alpha=K+1}{\overset{N}{\sum}}\bigr|\mathbb{P}\{(1-X_{\alpha-\ka})
 X_{\alpha}=1\}-\mathbb{P}\{X_{\alpha}=1\}+\mathbb{P}(\underset{i=1}
{\overset{\ell}{\cap}}T^{-d_{i}(\alpha-\ka)}R^{c_i/r})
\bigr|\\
&+\underset{\alpha=K+1}{\overset{N}{\sum}}\big(\bigr|\mathbb{P}
\{X_{\alpha}=1\}-(\mathbb{P}(A))^{\ell}\bigr|+\bigr|\mathbb{P}(\underset{i=1}
{\overset{\ell}{\cap}}T^{-d_{i}(\alpha-\ka)}R^{c_i/r})-\underset{i=1}
{\overset{\ell}{\prod}}\mathbb{P}(R^{c_{i}/r})\bigr|\big)\\
&=2K e^{-\Gamma n}+\sigma_{1}+\sigma_{2}+\sigma_{3}.
\end{eqnarray*}
Here $\sigma_{1},\sigma_{2}\text{ and }\sigma_{3}$ denote the first,
second and third sums, respectively, and we use in the last inequality 
above that
\[
\underset{\alpha=K+1}{\overset{N}{\sum}}
\mathbb{P}\{\underset{k=0}{\overset{n_{0}}{\prod}}X_{\alpha+k\ka}=1\}
\geq \underset{\alpha=K+1}{\overset{N}{\sum}}\bigr|\mathbb{P}\{(1-X_{\alpha
-\ka}) X_{\alpha}=1\}-\mathbb{P}(\cup_{j=1}^{n_0}\{ X_{\alpha}=1\})\bigr|.
\]

In order to estimate $\sigma_1$ we observe that the choice of $K$
gives $\al(d_{i+1}-d_i)\geq 3d_in$ for any $\al>K$ and $i=1,...,\ell-1$.
It follows that whenever $0\leq m\leq n_0$ and $\al>K$ there exist disjoint
sets of integers $Q_1,Q_2,,...,Q_\ell$ satisfying (\ref{3.2}) with $k\geq 1$
and such that $T^{-d_i\al}\cap_{l=0}^{n_0}T^{-d_il\ka}A\in\cF_{Q_i}$, $i=1,
...,\ell$. Since $r$ divides $d_i\ka$ and $d_i\ka\leq d_ir<n$ by the assumption
then for such $m$ each $\cap_{l=0}^{m}T^{-d_il\ka}A$ is contained
in $D_{i,m}\cap T^{-d_im\ka}A$ where $D_{i,m}$ is a cylinder set of the length 
$d_im\ka\geq rm$ and such that 
$D_{i,m}\in\cF_{0,d_im\ka -1}$ while, clearly, $T^{-d_im\ka}A\in
\cF_{d_im\ka,\infty}$. Hence, relying on Lemmas \ref{lem3.1} and
 \ref{lem3.2} we conclude from here that
\begin{eqnarray}\label{5.2-}
&\sum_{\al=K+1}^N\bbP\{\prod_{l=0}^mX_{\al+l\ka}=1\}=\underset{\alpha=K+1}
{\overset{N}{\sum}}\mathbb{P}\big(\underset{i=1}{\overset{\ell}{\cap}}T^{-d_{i}
\alpha}\cap_{l=0}^{m}T^{-d_il\ka}A\big)\\
&\leq (1+\psi_0)^\ell\sum_{\al=K+1}^N\prod_{i=1}^\ell\bbP(\cap_{l=0}^{m}
T^{-d_il\ka}A)\nonumber \\
&\leq (1+\psi_0)^{2\ell}N(\bbP(A))^\ell e^{-\Gam\ell rm}
\leq (1+\psi_0)^{2\ell} te^{-\Gam\ell rm}.\nonumber
\end{eqnarray}
In particular, taking $m=n_0$ we obtain
\begin{equation}\label{5.2}
\sigma_{1}\leq(1+\psi_0)^{2\ell} te^{-\Gam\ell rn_0}\leq
(1+\psi_0)^{2\ell} te^{-\Gam n/2}.
\end{equation}

Next we show that the term $\sigma_{2}$ vanishes. Since $r$ divides
$d_i\ka$ for each $i=1,...,\ell$ we have that $R^{c_i/r}=R^{d_i\ka/r}
\cap(T^{-d_i\ka}A)$. It follows that for any $\al>K$,
\begin{eqnarray*}
&\underset{i=1}{\overset{\ell}{\cap}} T^{-d_i(\al-\ka)}R^{c_{i}/r}
=(\underset{i=1}{\overset{\ell}{\cap}}T^{-d_i(\al-\ka)}R^{d_{i}\ka/r})\cap
(\underset{i=1}{\overset{\ell}{\cap}}T^{-d_i\al}A)\\
&=(\underset{i=1}{\overset{\ell}{\cap}}T^{-d_i(\al-\ka)}A)
\cap(\underset{i=1}{\overset{\ell}{\cap}}T^{-d_i\al}A)=
\{X_{\alpha-\ka} X_{\alpha}=1\}.
\end{eqnarray*}
Hence,
\begin{eqnarray*}
&\mathbb{P}\{X_{\alpha}=1\}-\mathbb{P}(\underset{i=1}{\overset{m}{\cap}}
T^{-d_i(\al-\ka)}R^{c_{i}/r})=
\mathbb{P}\{X_{\alpha}=1\}-\mathbb{P}\{X_{\alpha-\ka} X_{\alpha}=1\}\\
&=\mathbb{P}\{(1-X_{\alpha-\ka})X_{\alpha}=1\},
\end{eqnarray*} 
and so $\sig_2=0$.

Next, we estimate $\sigma_{3}$ using Lemma \ref{lem3.2} similarly to above
which gives
\begin{equation}\label{5.3}
\sigma_{3}\leq 2^{\ell}N\psi_{n}(\mathbb{P}(A))^{\ell}\leq2^{\ell} t
\psi_{n}.
\end{equation}
Hence, by (\ref{5.2}) and (\ref{5.3}),
\begin{eqnarray}\label{5.4}
&|\lambda-s|\leq 2Ke^{-\Gamma n}+(1+\psi_{0})^{2\ell}
 t e^{-\frac{\Gamma}{2} n}+2^{\ell} t\psi_{n}\\
&\leq 2(1+\psi_{0})^{2\ell} K(t+1) e^{-\frac{\Gamma}{2} n}
+2^{\ell} t\psi_{n}.\nonumber
\end{eqnarray}

Next, assume that $\lambda=0$.  Then by (\ref{5.1}),
\begin{equation}\label{5.5}
\mathbb{P}\{S_N\ne 0\}\leq\sum_{\al=1}^K\bbP\{ X_\al=1\}+\sig_1\leq 
K\mathbb{P}(A)+\sigma_{1}.
\end{equation}
Let $\eta_{1},\eta_{2},...$ be a sequence of i.i.d. random variables with 
$\mathbb{P}\{\eta_{1}\in\mathbb{N}^{+}\}=1$ independent of a Poisson random
variable $W$ with the parameter $s$ and $Z=\sum_{k=1}^W\eta_k$. Then by 
(\ref{5.2}), (\ref{5.4}) and (\ref{5.5}) for any $L\subset\bbN$,
\begin{eqnarray}\label{5.6}
&\quad |\mathbb{P}\{S_N\in L\}-\mathbb{P}\{Z\in L\}|\leq\mathbb{P}\{S_N\ne0\}+
|\mathbb{P}\{S_N=0\}-\mathbb{P}\{Z=0\}|\\
&+\mathbb{P}\{Z\ne0\}\leq 2(\mathbb{P}\{S_N\ne0\}+\mathbb{P}\{Z\ne0\})=
2(\mathbb{P}\{S_N\ne0\}+(1-e^{-s}))\nonumber\\
&\leq2(\mathbb{P}\{S_N\ne0\}+s)\leq 8(1+\psi_{0})^{2\ell}
 K(t+1) e^{-\frac{\Gamma}{2} n}+2^{\ell+1} t\psi_{n}.\nonumber
\end{eqnarray}
Hence, if $\lambda=0$ the theorem follows for any such i.i.d. sequence 
$\eta_{1},\eta_{2},...$, and so we can assume that $\lambda>0$. 

Define 
\[
I=\{(\alpha,j)\in I_{0}\::\:\mathbb{P}\{X_{\alpha,j}=1\}>0\}
\]
When $\lambda>0$ then $I\ne\emptyset$. For each $j\in\{1,...,n_{0}\}$ set
\[
\lambda_{j}=\lambda^{-1}\underset{\alpha=K+1}{\overset{N}{\sum}}
\lambda_{\alpha,j}
\]
We choose an i.i.d. sequence $\{\eta_{k}\}_{k=1}^{\infty}$ such that 
$\mathbb{P}\{\eta_{1}=j\}=\lambda_{j}$ for each $1\leq j\leq n_{0}$
and set, again, $Z=\sum_{k=1}^W\eta_k$ where, as before, $W$ is
a Poisson random with the parameter $s$ independent of $\eta_k$'s.
Set $\mathbf{X}=\{X_{\alpha,j}\}_{(\alpha,j)\in I}$ and
let $\mathbf{Y}=\{Y_{\alpha,j}\}_{(\alpha,j)\in I}$ be a collection
of independent random variables such that each $Y_{\alpha,j}$
has the Poisson distribution with the parameter $\lambda_{\alpha,j}$.
Given $(a_{\alpha,j})_{(\alpha,j)\in I}=a\in\mathbb{N}^{I}$ define
\[
f(a)=\underset{(\alpha,j)\in I}{\sum}j a_{\alpha,j}\,\,\mbox{and}\,\,
h_L(a)=\bbI_{f(a)\in L}.
\]
Then
\begin{eqnarray}\label{5.7}
&|\mathbb{P}\{S_N\in L\}-\mathbb{P}\{Z\in L\}|\leq|\mathbb{P}\{S_N\in L\}-
Eh_L(\mathbf{X})|\\
&+|Eh_L(\mathbf{X})-Eh_L(\mathbf{Y})|+
|Eh_L(\mathbf{Y})-\mathbb{P}\{Z\in L\}|=\delta_{1}+\delta_{2}+
\delta_{3}\nonumber
\end{eqnarray}
where $\del_1,\del_2$ and $\del_3$ denote the respective terms in the right
hand side of (\ref{5.7}).

By (\ref{5.1}) and (\ref{5.2}) we obtain that
\begin{eqnarray}\label{5.8}
&\delta_{1}\leq 2\mathbb{P}\{S_N\ne f(\mathbf{X})\}\leq 2\sum_{\al=1}^K
\bbP\{ X_\al=1\}+2\sig_1\leq 2K\bbP(A)+2\sig_1\\
&\leq 2K\mathbb{P}(A)+2(1+\psi_{0})^{2\ell}t 
e^{-\frac{\Gamma}{2} n}\nonumber\leq 4(1+\psi_{0})^{2\ell}
K(t+1)e^{-\frac{\Gamma}{2} n}.
\nonumber\end{eqnarray}

In order to estimate $\delta_{2}$ we use Theorem 2 from \cite{AGG}. Note
that by the definition of $I$ for each $(\alpha,j)\in I$ we have
$\mathbb{P}\{X_{\alpha,j}=1\}>0$, and so the use of the theorem
is justified. For each $(\alpha,j)\in I$ define
\[
B_{\alpha,j}=\{(\beta,k)\in I\::\:\exists\: i_{1},i_{2}=1,...,\ell\,\,
\mbox{such that}\,\, |d_{i_{1}}\alpha-d_{i_{2}}\beta|<K\}.
\]
By Theorem 2 in \cite{AGG} we see that
\begin{equation}\label{5.9}
\delta_{2}\leq\bigl\Vert\cL(f(\mathbf{X}))-\cL(f(\mathbf{Y}))\bigr\Vert\leq
2(2b_{1}+2b_{2}+b_{3}),
\end{equation}
where 
\[
\bigl\Vert\cL(\xi)-\cL(\zeta)\bigr\Vert=2\sup_{L\subset\bbN}|\bbP\{\xi\in L\}
-\bbP\{\zeta\in L\}|
\]
is the total variation distance between distributions of nonnegative integer
valued random variables $\xi$ and $\zeta$, 
\begin{eqnarray*}
&b_{1}=\underset{(\alpha,j)\in I}{\sum}(\underset{(\beta,k)\in B_{\alpha,j}}
{\sum}\mathbb{P}\{X_{\alpha,j}=1\}\mathbb{P}\{X_{\beta,k}=1\}),\\
&b_{2}=\underset{(\alpha,j)\in I}{\sum}(\underset{(\alpha,j)\ne(\beta,k)\in 
B_{\alpha,j}}{\sum}\mathbb{P}\{X_{\alpha,j}=1,X_{\beta,k}=1\})\,\,\mbox{and}\\
&b_{3}=\underset{(\alpha,j)\in I}{\sum}E\bigr|E(X_{\alpha,j}-
\lambda_{\alpha,j}\mid\cB_{\al,j})\bigr|
\end{eqnarray*}
where $\cB_{\al,j}=\sigma\{X_{\beta,k}:\,(\beta,k)\notin B_{\alpha,j}\}$.
For each $(\alpha,j)\in I$ it follows by Lemma \ref{lem3.2} that
\[
\mathbb{P}\{X_{\alpha,j}=1\}\leq\mathbb{P}\{X_{\alpha}=1\}
\leq 2(\mathbb{P}(A))^{\ell}.
\]
Since the number of elements in $B_{\alpha,j}$  and in $I$ do not exceed
$2K\ell^{2}n$ and $nN$, respectively, we obtain from here that
\begin{equation}\label{5.10}
b_{1}\leq 8n^2NK\ell^{2}(\mathbb{P}(A))^{2\ell}\leq 8K\ell^{2} t n^{2}
(\mathbb{P}(A))^{\ell}\leq 8K\ell^{2} tn^{2} e^{-\Gamma n}.
\end{equation}

Now we estimate $b_{2}$. Fix $(\alpha,j)\in I$, let $(\alpha,j)\ne(\beta,k)\in 
B_{\alpha,j}$
and set $F=\{X_{\alpha,j}=1,X_{\beta,k}=1\}$.
We want to estimate $\mathbb{P}(F)$ from above. Clearly, if $\alpha=\beta$
then $F=\emptyset$, so without loss of generality we can assume that
$\alpha<\beta$. Suppose, first, that $\alpha+\frac{n-3r}{d_{\ell}}>\beta$
and show that in this case $F=\emptyset$. Indeed, assume by contradiction
that $F\ne\emptyset$ then by Lemma \ref{lem3.6} it follows that 
$\alpha\leq\beta-\ka$. Let $\om\in F$ then $X_{\beta,k}(\omega)=1$, and so
$X_{\beta-\ka}(\omega)=0$ and $X_{\beta}(\omega)=1$. Hence,
 there exists an $1\leq i_{0}\leq \ell$ such that
\[
\mathbb{I}_{A}\circ T^{d_{i_{0}}(\beta-\ka)}(\omega)=0\text{ and }
\mathbb{I}_{A}\circ T^{d_{i_{0}}\beta}(\omega)=1.
\]
It follows that for $c=d_{i_{0}} \ka$,
\begin{equation}\label{5.11}
\mathbb{I}_{R^{c/r}}\circ T^{d_{i_{0}}(\beta-\ka)}(\omega)=0.
\end{equation}
By our assumption,
\[
d_{i_{0}}(\beta-\alpha)< d_\ell(\be-\al)<n-3r<(n_{0}-2) r.
\]
Write $d_{i_{0}}(\beta-\alpha)=ur+v$ where $u,v\in\mathbb{N}$
and $v<r$. Then $d_{i_{0}}\alpha+ur\leq d_{i_{0}}\alpha+(n_{0}-2) r$.
Since $X_{\alpha,j}(\omega)=1$, and so $\bbI_A\circ T^{d_{i_0}\al}(\om)=1$,
we obtain from the last inequality and the definition of $n_0$ that
$\mathbb{I}_{R^{2}}\circ T^{d_{i_{0}}\alpha+ur}(\omega)=1$ where $R^2=
R^{2r/r}$ is the concatenation of two copies of $R$.
Since $X_{\beta,k}(\omega)=1$, and so $\bbI_A\circ T^{d_{i_0}\be}(\om)=1$,
we obtain also that 
\[
\mathbb{I}_{R^{2}}\circ T^{d_{i_{0}}\alpha+ur+v}(\omega)=
\mathbb{I}_{R^{2}}\circ T^{d_{i_{0}}\beta}(\omega)=1.
\]
Hence, $R^{2}\cap T^{-v}(R^{2})\ne\emptyset$. By the assumption $v<r$ and
if $v>0$ then $\pi(R)\leq\pi(R^2)\leq v<r$. If $r$ is not divisible by 
$\pi(R)$ then by Lemma \ref{lem3.5} we would have $\pi(R^2)=r$ contradicting
the above inequality and if $\pi(R)$ divides $r$ then $\pi(R)\in\mathcal O(A)$
contradicting $\pi(A)=r$. Hence, $v=0$, and so $d_{i_{0}}(\beta-\alpha)=ur$.
Since $X_{\alpha,j}(\omega)=1$, $r$ divides $d_{i_0}\ka$ and  $n\geq ur =
d_{i_{0}}(\beta-\alpha)\geq d_{i_0}\ka$ it follows from here that
\begin{eqnarray*}
&1=X_{\alpha}(\omega)\leq\mathbb{I}_{A}\circ T^{d_{i_{0}}\cdot\alpha}(\omega)
\leq\mathbb{I}_{R^{ur/r}}\circ T^{d_{i_{0}}\cdot\alpha}(\omega)\\
&=(\mathbb{I}_{R^{(ur-d_{i_{0}}\ka)/r}}\circ T^{d_{i_{0}}\cdot\alpha}(\omega))
\cdot(\mathbb{I}_{R^{c/r}}\circ T^{(d_{i_{0}}\cdot\alpha+ur-d_{i_{0}}\ka)}(\omega))
\\
&\leq\mathbb{I}_{R^{c/r}}\circ T^{(d_{i_{0}}\cdot\alpha+ur-d_{i_{0}}\ka)}
(\omega)=\mathbb{I}_{R^{c/r}}\circ T^{d_{i_{0}}\cdot(\beta-\ka)}(\omega)
\end{eqnarray*}
where, again, $c=d_{i_0}\ka$ and we set $R^{0/r}=\Om$. Hence,
$\mathbb{I}_{R^{c/r}}\circ T^{d_{i_{0}}(\beta-\ka)}(\omega)=1$
contradicting (\ref{5.11}), and so $F=\emptyset$ in this case. 

Thus, we can assume that $\alpha+\frac{n-3r}{d_{\ell}}\leq\beta$, and so
 $d_{\ell}\alpha+n\leq d_{\ell}\beta+3r$. Hence, by Lemma \ref{lem3.2},
\begin{eqnarray*}
&\mathbb{P}\{X_{\alpha,j}=1,X_{\beta,k}=1\}\leq\bbP\{ X_\al=1,\,\bbI_A\circ
T^{d_\ell\be}=1\}\\
&\leq\mathbb{P}\{X_{\alpha}=1,\mathbb{I}_{R^{(n-3r)/r}}\circ T^{d_{\ell}
\beta+3r}=1\}\\
&\leq(1+\psi_{0})\mathbb{P}\{X_{\alpha}=1\}\mathbb{P}(R^{(n-3r)/r})
\leq 4\psi_{0}(\mathbb{P}(A))^{\ell}e^{-\Gamma(n-3r)}.
\end{eqnarray*}
Since the number of elements in $B_{\alpha,j}$  and in $I$ do not exceed
$2K\ell^{2}n$ and $nN$, respectively, we obtain that
\begin{equation}\label{5.12}
b_{2}\leq 8n^2NK\ell^{2}\psi_{0}(\mathbb{P}(A))^{\ell}e^{-\Gamma(n-3r)}
\leq 8\ell^{2}\psi_{0} K t n^{2} e^{-\frac{\Gamma}{2} n}.
\end{equation}

In order to estimate $b_{3}$ we use Lemma \ref{lem3.3} with
\begin{eqnarray*}
&Q=Q_{\al,j}=\{ d_i(\al+l\ka)+m:\, i=1,...,\ell;\, l=-1,0,1,...,j,\,
m=0,1,...,n-1\}\\
&\mbox{and}\,\,\tilde Q=\tilde Q_{\al,j}=\{ d_i(\be+l\ka)+m:\,\be\not\in 
B_{\al,j},\\ 
&i=1,...,\ell,\, l=-1,0,1,...,n_0,\, m=0,1,...,n-1\}.
\end{eqnarray*}
Then by the choice of $K$ the conditions of Lemma \ref{lem3.3} are satisfied
with $d=n$ and such $Q$ and $\tilde Q$. Taking into account that 
$\mathcal{B}_{\al,j}\subset\cF_{\tilde Q}$ 
we obtain from (\ref{2.1}), (\ref{2.2}) and Lemma \ref{lem3.3} that
\begin{eqnarray*}
&E\bigl|E(X_{\alpha,j}-\lambda_{\alpha,j}\mid\mathcal{B}_{\al,j})\bigl|=
E\bigl|E(E(X_{\alpha,j}-\lambda_{\alpha,j}\mid\mathcal{F}_{\tilde Q})
\mid\mathcal{B}_{\al,j})\bigl|\\
&\leq E\bigl|E(X_{\alpha,j}-\lambda_{\alpha,j}\mid\mathcal{F}_{\tilde Q})\bigl|
\leq 2^{2\ell+4}\psi_{n}\mathbb{P}\{X_{\alpha,j}=1\}.
\end{eqnarray*}
For each $1\leq i\leq \ell$ set $c_{i}=n+\ka d_{i}(j-1)$. Then by 
Lemma \ref{lem3.2} we see that
\begin{eqnarray*}
&\mathbb{P}\{X_{\alpha,j}=1\}\leq\mathbb{P}\{\underset{i=1}{\overset{m}{\cap}}
\mathbb{I}_{R^{c_{i}/r}}\circ T^{d_{i}\alpha}\}\leq 2\underset{i=1}
{\overset{\ell}{\prod}}\mathbb{P}(R^{c_{i}/r})\\
&\leq 2(1+\psi_{0})\underset{i=1}{\overset{\ell}{\prod}}(\mathbb{P}(A)
 e^{-\Gamma \ka d_{i}(j-1)})\leq 2(1+\psi_{0})(\mathbb{P}(A))^{\ell}
e^{-\Gamma\ell r(j-1)}.
\end{eqnarray*}

It follows from the above estimates that
\begin{eqnarray}\label{5.13}
&b_{3}=\underset{(\alpha,j)\in I}{\sum}E\bigl|E(X_{\alpha,j}-
\lambda_{\alpha,j}\mid\cB_{\al,j}\})\bigl|
\leq\underset{\alpha=K+1}{\overset{N}{\sum}}\underset{j=1}{\overset{n_{0}}
{\sum}}2^{2\ell+4}\psi_{n}\mathbb{P}\{X_{\alpha,j}=1\}\\
&\leq 2^{2\ell+5}(1+\psi_{0})\psi_{n}N(\mathbb{P}(A))^{\ell}\underset{j=1}
{\overset{n_{0}}{\sum}}e^{-\Gamma(j-1)}\nonumber\\
&\leq 2^{2\ell+5}(1+\psi_{0})\psi_{n} N(\mathbb{P}(A))^{\ell}
\frac{1}{1-e^{-\Gamma}}\leq 2^{2\ell+5}(1+\psi_{0})t\psi_{n}(1-e^{-\Gam})^{-1}.
\nonumber\end{eqnarray}

Next, we estimate $\delta_{3}$. Given a random variable $\xi$ we denote
by $\varphi_{\xi}$ the characteristic function of $\xi$. Let $\xi$ be a
Poisson random variable with a parameter $\la$ 
independent of $\{\eta_{k}\}_{k=1}^{\infty}$ and set $\Psi=\underset{k=1}
{\overset{\xi}{\sum}}\eta_{k}$.
Then for each $s\in\mathbb{R}$,
\begin{equation*}
\varphi_{\Psi}(s)=\underset{l=0}{\overset{\infty}{\sum}}\mathbb{P}\{\xi=l\}
\underset{k=1}{\overset{l}{\prod}}\varphi_{\eta_{k}}(s)=e^{-\la}\underset{l=0}
{\overset{\infty}{\sum}}\frac{\lambda^{l}}{l!}(\varphi_{\eta_{1}}(s))^{l}
=\exp(\lambda\underset{j=1}{\overset{n_{0}}{\sum}}\lambda_{j}(e^{i js}-1))
\end{equation*}
and
\begin{equation*}
\varphi_{f(\mathbf{Y})}(s)=\underset{(\alpha,j)\in I}{\prod}
\varphi_{Y_{\alpha,j}}(j s)
=\exp(\underset{(\alpha,j)\in I}{\sum}\lambda_{\alpha,j}(e^{i js}-1))=
\exp(\lambda\underset{j=1}{\overset{n_{0}}{\sum}}\lambda_{j}(e^{i js}-1))
\end{equation*}
warning the reader that the last two formulas are the only places in this
paper where $i$ stands for $\sqrt {-1}$ and not for an integer.
It follows from here that $f(\mathbf{Y})$ and $\Psi$ have the same
distribution, and so by Lemmas \ref{lem3.2} and \ref{lem3.4},
\begin{eqnarray}\label{5.14}
&\quad\delta_{3}=|\mathbb{P}\{\Psi\in L\}-\mathbb{P}\{Z\in L\}|=\underset{l=0}
{\overset{\infty}{\sum}}|\mathbb{P}\{\xi=l\}-\mathbb{P}\{W=l\}|
\mathbb{P}\{\underset{k=1}{\overset{l}{\sum}}\eta_{k}\in L\}\\
&\leq\underset{l=0}{\overset{\infty}{\sum}}|\mathbb{P}\{\xi=l\}-
\mathbb{P}\{W=l\}|\leq 2e^{|\lambda-s|}|\lambda-s|=2\wp(|\la-s|).\nonumber
\end{eqnarray}
Finally, (\ref{2.14}) follows from (\ref{5.4}), (\ref{5.6})--(\ref{5.10}) and 
(\ref{5.12})--(\ref{5.14}) completing the proof of Theorem \ref{thm2.3}.   \qed

\subsection{Proof of Corollary \ref{cor2.3+}}
Let $\om\in\Om_\bbP$ be a nonperiodic sequence, $A=A_n^\om$ and $r=r^\om_n=
\pi(A^\om_n)$. Assume first that $n>r^\om_n(d_\ell+6)$ so that Theorem
 \ref{thm2.3} can be applied. Then by (\ref{5.2-}) 
and the definition of the sequence $\eta_1,\eta_2,...$ in the proof of
Theorem \ref{thm2.3} we obtain that
\begin{equation}\label{5.16}
\bbP\{\eta_1=j\}=\la_j=\la^{-1}\sum_{\al=K+1}^N\la_{\al,j}\leq\la^{-1}
(1+\psi_0)^{\ell+1}te^{-\Gam\ell r(j-1)}.
\end{equation}
Let $\Xi$ be a Poisson random variable with 
the parameter $t$ and $Z=\sum^W_{l=1}\eta_l$ be the compound Poisson random
variable constructed by Theorem \ref{thm2.3} with $A=A^\om_n,\,\rho=\rho_n^\om$
 and $r=r^\om_n$. Then for any $L\subset\bbN$,
 \begin{eqnarray}\label{5.17}
 &\,\,\quad|\bbP\{\Xi\in L\}-\bbP\{ Z\in L\}|\leq\sum_{l=0}^\infty|\bbP\{\Xi=l\}
 -\bbP\{ W=l\}|\bbP\{\sum_{k=1}^l\eta_k\in L\}\\
 &+\sum_{l=0}^\infty\bbP\{\Xi=l\}|\bbP\{\sum_{k=1}^l
 \eta_k\in L\}-\bbI_L(l)|\nonumber
 \end{eqnarray}
 where $\bbI_L(l)=1$ if $l\in L$ and $=0$, otherwise. 
 
 By Lemma \ref{lem3.4},
 \begin{equation}\label{5.18}
 \sum_{l=0}^\infty |\bbP\{\Xi=l\}-\bbP\{ W=l\}|\leq 2t\rho^\om_ne^{t\rho^\om_n}
 =2\wp(t\rho^\om_n).
 \end{equation}
 Next, by (\ref{5.16}),
 \begin{eqnarray}\label{5.19}
 &|\bbP\{\sum_{k=1}^l\eta_k\in L\}-\bbI_L(l)|\leq 2\bbP\{\sum_{k=1}^l
 \eta_k\ne l\}\leq 2l\bbP\{\eta_1\ne 1\}\\
 &=2l\sum^{n_0}_{j=2}\la_j\leq 2l\la^{-1}(1+\psi_0)^{2\ell}te^{-\Gam\ell
 r^\om_n}(1-e^{-\Gam\ell r^\om_n})^{-1}.\nonumber
 \end{eqnarray}
 Now (\ref{5.17})--(\ref{5.19}) yield
 \begin{equation}\label{5.20}
 |\bbP\{\Xi\in L\}-\bbP\{ Z\in L\}|\leq 2\wp(t\rho^\om_n)+2\la^{-1}
 (1+\psi_0)^{\ell+1}t^2e^{-\Gam\ell r^\om_n}(1-e^{-\Gam\ell r^\om_n})^{-1}
 \end{equation}
 where we used that $t=E\Xi=e^{-t}\sum_{l=0}^\infty l\frac {t^l}{l!}$.
 
 If $n\leq r^\om_n(d_\ell+6)$ then we apply Theorem \ref{thm2.1}, and
 so we can write
 \[
 |\bbP\{ S_N^{A_n^\om}\in L\}-\bbP\{\Xi\in L\}|\leq\max(\ve_1(n)+\ve_2(n),
 \,\ve_3(n))
 \]
 where $\ve_1(n)$ and $\ve_2(n)$ are right hand sides of (\ref{2.14})
 and (\ref{5.20}), respectively, while
 \[
 \ve_3(n)=16e^{-\Gam n/(d_\ell+6)}\big(\ell^2nt+\gam(n)(1+t^{-1})+
 tn\ell^2(1+\psi_0)\big)+2\wp\big(2^\ell t\psi_n+\gam(n)e^{-\Gam n}\big).
 \]
 Clearly, $\ve_1(n),\ve_3(n)\to 0$ as $n\to\infty$ and
 since $\rho^\om_n\to 0$ and $r^\om_n\to\infty$ as $n\to\infty$ by 
 Lemma \ref{lem3.7} then $\ve_2(n)\to 0$ as $n\to\infty$, as well, and
 so the assertion of Corollary \ref{cor2.3+} follows.   \qed

\subsection{Proof of Corollary \ref{cor2.5}}
 Let $t>0$ be given. If $n>r(d_{\ell}+6)$
then we take $W$ and $Z$ to be as in Theorem \ref{thm2.3}. Note that 
$\mathbb{P}\{(\mathbb{P}(A))^{\ell}\tau_{A}>t\}=\mathbb{P}\{S^A_N=0\}$
and $\mathbb{P}\{Z=0\}=\bbP\{ W=0\}$, and so by Theorem \ref{thm2.3},
\begin{eqnarray}\label{5.21}
&|\mathbb{P}\{(\mathbb{P}(A))^{\ell}\tau_{A}>t\}-\mathbb{P}\{ W=0\}|=
|\mathbb{P}\{ S^A_N=0\}-\mathbb{P}\{ Z=0\}|\\
&\leq 2^{2\ell+7}(1+\psi_0)^{2\ell}(t+1)\big(d_\ell\ell^2 n^{4} 
e^{-\Gamma n/2}+\psi_{n}(1-e^{-\Gam})^{-1}\big)\nonumber\\
&+2\wp\big(2^\ell t\psi_n+10e^{-\Gam n/2}(1+\psi_0)^{2^\ell}d_\ell 
n^2(t+1)\big)\nonumber
\end{eqnarray} 

On the other hand, if $n\leq r(d_{\ell}+6)$ then by Theorem
\ref{thm2.1} (with $q_{i}$'s being linear),
\begin{equation}\label{5.22}
|\mathbb{P}\{(\mathbb{P}(A))^{\ell}\tau_{A}>t\}-P_s\{ 0\}|\leq
|\mathbb{P}\{S^A_N=0\}-P_{t}\{0\}|+|P_{t}\{0\}-P_{s}\{0\}|.
\end{equation}
where $s=t(1-\rho)$. Furthermore, $\ka\geq\frac{r}{d_{1}}$, and so
\begin{eqnarray}\label{5.23}
&\rho=\underset{i=1}{\overset{\ell}{\prod}}\mathbb{P}\{R^{(n+d_{i} \ka)/r}
\mid A\}\leq\frac{\mathbb{P}(R^{(n+d_{1} \ka)/r})}{\mathbb{P}(A)}
\leq\psi_{0}\mathbb{P}(R^{(d_{1} \ka)/r})\\
&\leq\psi_{0} e^{-\Gamma d_{1}\ka}\leq\psi_{0} e^{-\Gamma r}.
\nonumber\end{eqnarray}
By Theorem \ref{thm2.1},
\begin{eqnarray}\label{5.24}
&|\mathbb{P}\{S_N^A=0\}-P_{t}\{0\}|\leq 16e^{-\Gam n}(\ell^2nt+
\gam(n)(1+t^{-1}))\\
&+6(1+\psi_0)tn\ell^2 e^{-\Gamma r}+2\wp\big(2^\ell t\psi_n+
\gam(n)e^{-\Gam n}\big)\nonumber
\end{eqnarray}
and by (\ref{5.23}),
\begin{equation}\label{5.25}
|P_{t}\{0\}-P_{s}\{0\}|\leq |t-s|\leq t\psi_0 e^{-\Gam r}.
\end{equation}
Taking into account that $\gam(n)\leq 2n$ when $q_i$'s are as in Theorem 
\ref{thm2.3} and that $r\geq n/(d_\ell+6)$ in (\ref{5.24}) we obtain the
estimate of Corollary \ref{cor2.5} from (\ref{5.21})--(\ref{5.25}). \qed

\subsection{Proof of Corollary \ref{cor2.5+}}

First, observe that our mixing conditions imply, in particular, that
$\bbP$ is ergodic. Set $\iota=\sup_{A\in\cC_n,\, n\geq 1}\rho_A$ and
$b(n)=\frac {2\ln n}{1-\iota}$ recalling that $\iota<1$ by (\ref{2.12}). 
 Now, by Corollary \ref{cor2.5} for any $A\in\cC_n$ with $\bbP(A)>0$,
 \begin{eqnarray}\label{5.27}
 &\bbP\{ (\bbP(A))^\ell\tau_A>b(n)\}\leq e^{-(1-\iota)b(n)}\\
 &+2^{2\ell+8}(1+\psi_0)^{2\ell}(b(n)+1)\big( d_\ell\ell^2 n^4(1+
 \frac 1{b(n)})e^{-\Gam n/(d_\ell+6)}+\psi_n(1-e^{-\Gam})^{-1}\big)\nonumber\\
 &+2\wp\big(2^\ell b(n)\psi_n+10e^{-\Gam n/2}(1+\psi_0)^{2\ell}d_\ell n^2
 (b(n)+1)\big)\nonumber
 \nonumber\end{eqnarray}
and
\begin{eqnarray}\label{5.28}
 &\bbP\{ (\bbP(A))^\ell\tau_A\leq n^{-2}\}=1-\bbP\{ (\bbP(A))^\ell\tau_A
 >n^{-2}\}\\
 &\leq |1-e^{-(1-\rho_A)n^{-2}}|+|\bbP\{ (\bbP(A))^\ell\tau_A>n^{-2}\}-
 e^{-(1-\rho_A)n^{-2}}|\nonumber\\
 &\leq n^{-2}+2^{2\ell+8}(1+\psi_0)^{2\ell}(n^{-2}+1)\big( d_\ell\ell^2 n^4
 (1+n^2)e^{-\Gam n/(d_\ell+6)}\nonumber\\
 &+\psi_n(1-e^{-\Gam})^{-1}\big)+2\wp\big(2^\ell n^{-2}\psi_n+10
 e^{-\Gam n/2}(1+\psi_0)^{2\ell}d_\ell(1+n^2)\big)\nonumber
 \nonumber\end{eqnarray}.
 Applying (\ref{5.27}) and (\ref{5.28}) to $A=A^\om_n$ with $\om\in\Om_\bbP$
 we obtain by the Borel-Cantelli lemma
 that there exists a random variable $m=m(\varpi)$ on $\Om$ finite $\bbP$-a.s.
 and such that for all $n\geq m(\varpi)$, 
 \begin{equation}\label{5.29}
 n^{-2}<(\bbP(A_n^\om))^\ell\tau_{A^\om_n}(\varpi)\leq b(n)
 \end{equation}
 which implies (\ref{2.102}). Finally, if (\ref{2.103}) holds true then
 (\ref{2.104}) follows from (\ref{2.102}) and the Shannon-McMillan-Breiman
 theorem (see, for instance, \cite{Pe}).
 \qed

\section{I.i.d. case}\label{sec6}\setcounter{equation}{0}

\subsection{Proof of Theorem \ref{thm2.6}} 
We use the same notation as in the proof of Theorem \ref{thm2.3} and
as there we can assume that $\lambda>0$ and $N>K=5d_\ell rn$. In order to
derive (\ref{2.17}) we will estimate $|\mathbb{P}\{Z\in L\}-\mathbb{P}
\{Y\in L\}|$ for any $L\subset\mathbb{N}$ and combine it with (\ref{2.14}). 
Observe that under the assumption that the coordinate projections from $\Omega$
onto $\mathcal{A}$ are i.i.d. random variables it follows that 
\begin{equation}\label{6.2}
\rho=\underset{i=1}{\overset{\ell}{\prod}}\mathbb{P}\{R^{(n+d_{i} \ka)/r}
\mid A\}=\underset{i=1}{\overset{\ell}{\prod}}\mathbb{P}(R^{(d_{i} \ka)/r})
=\big(\bbP(R))^{k_0}\leq\mathbb{P}(R),
\end{equation}
where $k_0=\frac \ka r\sum_{i=1}^\ell d_i$, and also that 
$\lambda_{\alpha,j}=(1-\rho)^{2}(\mathbb{P}(A))^{\ell}\rho^{j-1}$ 
for each $(\alpha,j)\in I$. Let $\eta_1,\eta_2,...$ be
i.i.d. random variables constructed in the proof of Theorem \ref{thm2.3}.
Then, for each $ j=1,2,...,n_{0}=[n/r]$,
\begin{eqnarray*}
&\mathbb{P}\{\eta_{1}=j\}=\lambda_{j}=((N-K)\underset{l=1}
{\overset{n_{0}}{\sum}}\lambda_{N,l})^{-1}(N-K)\lambda_{N,j}\\
&=((1-\rho)^{2}(\mathbb{P}(A))^{\ell}\underset{l=1}{\overset{n_{0}}{\sum}}
\rho^{l-1})^{-1}(1-\rho)^{2}(\mathbb{P}(A))^{\ell}\rho^{j-1}\\
&=((1-\rho)\underset{l=1}{\overset{n_{0}}{\sum}}\rho^{l-1})^{-1} 
\mathbb{P}\{\zeta_{1}=j\}=\mathbb{P}\{\zeta_{1}=j\mid \zeta_{1}\in
\{1,...,n_{0}\}\}
\end{eqnarray*}
where $\zeta_1,\zeta_2,...$ are i.i.d. random variables described in the
statement of Theorem \ref{thm2.6}.
Furthermore, for any $j>n_{0}$,
\[
\mathbb{P}\{\eta_{1}=j\}=0=\mathbb{P}\{\zeta_{1}=j\mid \zeta_{1}\in\{1,...,
n_{0}\}\}
\]
Hence, by the independence for any $l\geq1$ and each
$j_{1},...,j_{l}\in\mathbb{N}^{+}$,
\[
\mathbb{P}\{\eta_{1}=j_{1},...,\eta_{l}=j_{l}\}=\mathbb{P}\{\zeta_{1}=j_{1},...,
\zeta_{l}=j_{l}\mid \zeta_{1},...,\zeta_{l}\in\{1,...,n_{0}\}\}.
\]
It follows from here that
\[
\mathbb{P}\{\underset{k=1}{\overset{l}{\sum}}\eta_{k}\in L\}=\mathbb{P}
\{\underset{k=1}{\overset{l}{\sum}}\zeta_{k}\in L\mid \zeta_{1},...,\zeta_{l}\in\{1,...,
n_{0}\}\}.
\]
Introduce the event $\Psi=\{\exists k\leq l,k\geq 1:\, \zeta_{k}\notin
\{1,...,n_{0}\}\}$. Then by the law of total probability it follows that
\begin{eqnarray*}
&|\mathbb{P}\{\underset{k=1}{\overset{l}{\sum}}\zeta_{k}\in L\}-\mathbb{P}
\{\underset{k=1}{\overset{l}{\sum}}\eta_{k}\in L\}|\\
&=\big\vert\mathbb{P}\{\zeta_{1},...,\zeta_{l}\in\{1,...,n_{0}\}\}\mathbb{P}
\{\underset{k=1}{\overset{l}{\sum}}\zeta_{k}\in L\mid \zeta_{1},...,\zeta_{l}\in
\{1,...,n_{0}\}\}\\
&+\mathbb{P}(\Psi)\mathbb{P}\{\underset{k=1}{\overset{l}{\sum}}\zeta_{k}\in L\mid
\Psi\}-\mathbb{P}\{\underset{k=1}{\overset{l}{\sum}}\zeta_{k}\in L\mid \zeta_{1},...
,\zeta_{l}\in\{1,...,n_{0}\}\}\big\vert\\
&=2\mathbb{P}(\Psi)\leq2\underset{k=1}{\overset{l}{\sum}}\mathbb{P}\{ \zeta_{k}
\notin\{1,...,n_{0}\}=2l\rho^{n_{0}}.
\end{eqnarray*}
Set again $s=t(1-\rho)$. Then
\begin{eqnarray*}
&|\mathbb{P}\{Z\in L\}-\mathbb{P}\{Y\in L\}|\leq\underset{l=0}{\overset{\infty}
{\sum}}\mathbb{P}\{W=l\}|\mathbb{P}\{\underset{k=1}{\overset{l}{\sum}}
\eta_{k}\in L\}-\mathbb{P}\{\underset{k=1}{\overset{l}{\sum}}\zeta_{k}\in L\}|\\
&\leq\underset{l=1}{\overset{n_{0}}{\sum}}|\mathbb{P}\{\underset{k=1}
{\overset{l}{\sum}}\eta_{k}\in L\}-\mathbb{P}\{\underset{k=1}{\overset{l}
{\sum}}\zeta_{k}\in L\}|+\mathbb{P}\{W>n_{0}\}\\
&\leq\underset{l=1}{\overset{n_{0}}{\sum}}2l\rho^{n_{0}}+
e^{-s}(e^{s}-\underset{l=0}{\overset{n_{0}}{\sum}}\frac{s^{l}}
{l!})\leq2n_{0}^{2}\rho^{n_{0}}+\frac{t^{n_{0}+1}}{(n_{0}+1)!}.
\end{eqnarray*}
Taking into account that $\psi_n\equiv 0$ under the independency assumption
 (\ref{2.17}) follows from here together with (\ref{2.14}), (\ref{6.2})
and Lemma \ref{lem3.1}, completing the proof of Theorem \ref{thm2.6}.  \qed

\subsection{Proof of Theorem \ref{thm2.7}}
In the i.i.d. case the assertion (i) can be derived easily studying the 
asymptotic behavior of the compound Poisson distribution constructed Theorem
\ref{thm2.6} but since (i) follows from the more general result of Corollary
\ref{cor2.3+} we will prove now only the assertion (ii).
 Assume that $\om$ is periodic with a period
$d\in\mathbb{N}^{+}$. From Lemma \ref{lem3.7} it follows that there
exist a positive integer $M\geq1$ such that $r_{n}^{\om}=d$ for all 
$n\geq M$. Next, by (\ref{6.2}) for any $n\geq M$,
\[
\rho_{n}^{\om}=\big(\mathbb{P}([\om_{0},...,\om_{r_{n}^{\om}-1}])\big)^{k_0}
=\big(\mathbb{P}([\om_{0},...,\om_{d-1}])\big)^{k_0}=\rho^{\om}
\]
which does not depend on $n$. Hence, for all $n\geq\max(M,d(d_\ell+6))$
we can apply Theorem \ref{thm2.6} in order to obtain that
\begin{eqnarray*}
&\underset{L\subset\mathbb{N}}{\sup}|\mathbb{P}\{U_{n}^{\om}\in L\}-
\mathbb{P}\{Y\in L\}|\\
&\leq 2^{2\ell+8}(t+1)d_\ell\ell^2 n^{4}e^{-\Gam n/2}+2\wp\big(10d_\ell 
n^2(t+1)e^{-\Gam n/2}\big)+\frac{t^{[n/d]+1}}{([n/d]+1)!}
\end{eqnarray*}
where $\Gamma=\min\{-\ln(\mathbb{P}\{\omega_{0}=a\})\::\: 
a\in\mathcal{A}\}$. 
The expression on the right hand side
of the last inequality tends to $0$ as $n\rightarrow\infty$, and so
(ii) is proved. 
\qed

\section{A nonconvergence example}\label{sec7}\setcounter{equation}{0}

We assume here that $\cA=\{ 0,1\}$ and consider the probability space
$(\Om,\cF,\bbP)$ such that $\Om=\{ 0,1\}^\bbN$, $\cF$ is the $\sig$-algebra
generated by cylinder sets and $\mathbb{P}$ is a probability
measure on $\Omega$ such that the coordinate projections 
$\{\omega_{j}\}_{j=0}^{\infty}$ from $\Omega$ onto $\{0,1\}$ are i.i.d. random
variables with
$\mathbb{P}\{\omega_{0}=0\}=p_{0}=1-p_{1}=1-\mathbb{P}\{\omega_{0}=1\}$
where $p_{0},p_{1}>0$, $p_{0}+p_{1}=1$ and $p_{0}\ne p_{1}$. As before
$T$ will denote the left shift on $\Om$ and we introduce here another map
$S:\,\Om\to\Om$ acting by $(S\om)_n=\om_n+\om_{n+1}$ (mod 2) for any $n\geq 0$
and $\om=(\om_0,\om_1,...)$.

Set $\bbP_0=S\bbP$ which is a probability measure on $\Om$ defined by
$\bbP_0(U)=\bbP(S^{-1}U)$ for any $U\in\cF$. We claim that $\bbP_0$ is
$T$-invariant. Indeed, let $A=[a_0,a_1,...,a_{n-1}]$ be a cylinder set
then
\begin{equation}\label{7.1}
S^{-1}A=[0,\al_1,...,\al_n]\cup [1,\be_1,...,\be_n]
\end{equation}
where $\al_i+\be_i=1$ for all $i=1,...,n$ and $a_i=\al_i+\al_{i+1}$ (mod 2)
$=\be_i+\be_{i+1}$ (mod 2) for $i=0,1,...,n-1$ where $\al_0=0$ and $\be_0=1$.
Similarly,
\begin{equation}\label{7.2}
S^{-1}T^{-1}A=[0,0,\al_1,...,\al_n]\cup [1,0,\al_1,...,\al_n]\cup
[1,1,\be_1,...,\be_n]\cup [0,1,\be_1,...,\be_n].
\end{equation}
It follows from (\ref{7.1}) and (\ref{7.2}) that
\begin{equation}\label{7.3}
\bbP_0(T^{-1}A)=\bbP(S^{-1}T^{-1}A)=\bbP(S^{-1}A)=\bbP_0(A).
\end{equation}
Since (\ref{7.3}) holds true for all cylinder sets, it remains true for
their disjoint unions and since it is preserved under monotone limits we
obtain that (\ref{7.3}) is satisfied for any $A\in\cF$ which proves our claim.

Next, we will show that $\bbP_0$ has a $\psi$-mixing coefficient $\psi_l$
equal zero for any $l\geq 1$ while $\psi_0<\infty$. First, observe that 
$\psi_l$ can be defined using only cylinder sets saying that $\psi_l$ is the
infimum of constants $M\geq 0$ such that for each $n\geq 0$ and any
cylinder sets $A\in\cF_{0,n}$ and $B\in\cF_{n+l+1,\infty}$ (as defined at
the beginning of Section \ref{sec2}),
\begin{equation}\label{7.4}
|\bbP_0(A\cap B)-\bbP_0(A)\bbP_0(B)|\leq M\bbP_0(A)\bbP_0(B).
\end{equation}
Indeed, if (\ref{7.4}) holds true for such cylinder sets then it remains true
for their corresponding disjoint unions and it is preserved under monotone
limits which yields that (\ref{7.4}) is satisfied for any sets $A\in\cF_{0,n}$
and $B\in\cF_{n+l+1,\infty}$, proving the assertion. Now, if $A\in\cF_{0,n}$
and $B\in\cF_{n+l+1,\infty}$ are cylinder sets then analysing their preimages
under $S^{-1}$ similarly to (\ref{7.1}) we conclude that $S^{-1}A\in
\cF_{0,n+1}$ and $S^{-1}B\in\cF_{n+l+1,\infty}$. Hence, if $l\geq 1$ then
$S^{-1}A$ and $S^{-1}B$ are independent events with respect to the probability
$\bbP$, and so
\[
\bbP_0(A\cap B)-\bbP_0(A)\bbP_0(B)=\bbP(S^{-1}A\cap S^{-1}B)-\bbP(S^{-1}A)
\bbP(S^{-1}B)=0
\]
implying that $\psi_l=0$ in this case.

In order to estimate $\psi_0$ we observe that if $B=[b_{n},b_{n+1},...,
b_{n+m-1}]=\{\om=(\om_0,\om_1,...):\,\om_i=b_i$ when $n\leq i\leq n+m-1\}$ then
\[
S^{-1}B=[\gam_{n},\gam_{n+1},...,\gam_{n+m}]\cup [\del_{n},\del_{n+1},
...,\del_{n+m}]
\]
where $\gam_i+\del_i=1$ for $i=n,...,n+m$ and $\gam_i+\gam_{i+1}=
\del_i+\del_{i+1}=b_i$ for $i=n,...,n+m-1$. This together with (\ref{7.1})
yields that
\[
\bbP_0(A\cap B)=\bbP(S^{-1}A\cap S^{-1}B)=\prod_{i=0}^np_{\al_i}
\prod_{j=n+1}^{n+m}p_{\gam_j}+\prod_{i=0}^np_{\be_i}
\prod_{j=n+1}^{n+m}p_{\del_j},
\]
where $\al_0=0$, $\be_0=1$ and without loss of generality we assume that 
$\al_n=\gam_{n}$ and $\be_n=\del_{n}$. On the other hand,
\[
\bbP_0(A)=\bbP(S^{-1}A)=\prod_{i=0}^np_{\al_i}+\prod_{i=0}^np_{\be_i}\,\,
\mbox{and}
\]
\[
\bbP_0(B)=\bbP(S^{-1}B)=\prod_{j=n}^{n+m}p_{\gam_j}+
\prod_{j=n}^{n+m}p_{\del_j}.
\]
It follows that
\[
\frac {\bbP_0(A\cap B)}{\bbP_0(A)\bbP_0(B)}\leq 2(p^{-1}_{\gam_{n}}+
p_{\del_{n}}^{-1})=2(p^{-1}_0+p^{-1}_1).
\]
Hence, $\psi_0\leq 1+2(p_0^{-1}+p_1^{-1})<\infty$ as required.

Let $1^{\infty}=\om\in\Omega$ and $t>0$. For each $n\geq1$ let $A_{n}^{\om}$,
$N_{n}^{\om}$ and $U_{n}^{\om}=S^{A^\om_n}_{N^\om_n}$
be as defined in Section \ref{sec2}. We now
show that the sequence $\{U_{n}^{\om}\}_{n=1}^{\infty}$ does not converge
in distribution when we take $\mathbb{P}_{0}$ as the measure on $\Omega$.
For two strings $S_{1}$ and $S_{2}$ of 0 and 1 digits we denote by 
$S_{1}\cdot S_{2}$
the concatenation of $S_{1}$ with $S_{2}$, and for any integer $k\geq1$
we denote by $S_{1}^{k}$ the concatenation of $S_{1}$ with itself
$k$ times. For every even $n\geq 1$,
\[
\mathbb{P}_{0}(1^{n})=\mathbb{P}([[1,0]^{n/2}\cdot 1])+
\mathbb{P}([[0,1]^{n/2}\cdot 0])=p_{1}^{\frac{n}{2}+1}
 p_{0}^{\frac{n}{2}}+p_{0}^{\frac{n}{2}+1} p_{1}^{\frac{n}{2}}=
(p_{1} p_{0})^{\frac{n}{2}}
\]
And for every odd $n\geq1$,
\[
\mathbb{P}_{0}(1^{n})=\mathbb{P}([[1,0]^{(n+1)/2}])+\mathbb{P}
([[0,1]^{(n+1)/2}])=2(p_{1} p_{0})^{\frac{n+1}{2}}.
\]
From this it follows that for every even $n\geq1$,
\[
\rho_{A^\om_n}=\mathbb{P}_{0}(1^{n+1}\mid1^{n})=\frac{2(p_{1} 
p_{0})^{\frac{n+2}{2}}}{(p_{1}p_{0})^{\frac{n}{2}}}=2
p_{1} p_{0}
\]
while for every odd $n\geq1$,
\[
\rho_{A^\om_n}=\mathbb{P}_{0}(1^{n+1}\mid1^{n})=\frac{(p_{1} 
p_{0})^{\frac{n+1}{2}}}{2(p_{1} p_{0})^{\frac{n+1}{2}}}=\frac{1}{2}.
\]
Now for each $n\geq1$ set $\theta_{n}=\mathbb{P}_{0}\{U_{n}^{\om}=0\}$.
Then from Theorem \ref{thm2.3} it follows that
\[
\underset{n\rightarrow\infty}{\lim}\theta_{2n}=e^{-t(1-2p_{1}p_{0})}\,\,
\mbox{and}\,\,\underset{n\rightarrow\infty}{\lim}\theta_{2n+1}=e^{-\frac{t}{2}}.
\]
Since $p_0\ne p_1$ then $1-2p_{1}p_{0}\ne\frac{1}{2}$,
and so the sequence $\{U_{n}^{\om}\}_{n=1}^{\infty}$ does not converge
in distribution when we take $\mathbb{P}_{0}$ as the measure on $\Omega$.

\bibliography{matz_nonarticles,matz_articles}
\bibliographystyle{alpha}

\end{document}